\renewcommand\thefootnote{\hbox to 6pt{\hss\arabic{footnote}}}
\renewcommand{\phi}{\varphi}
\renewcommand{\frak}[1]{\mathfrak{#1}}
\def\op{{\mathit{op}}}
\def\Id{{\mathsf{Id}}}
\def\ol#1{\overline{#1}}
\def\Colim#1#2{{#1}\mathop{*}{#2}}
\def\Lan#1#2{{\mathrm{Lan}}_{#1} #2}
\def\One{{\mathbb{1}}}
\def\Two{{\mathbb{2}}}
\def\Sum{\bullet}
\def\tensor{\otimes}
\def\extra{{{\sf o}}}
\def\hs{\heartsuit}
\def\kat#1{{\mathscr{#1}}}
\def\V{\kat{V}}
\def\A{\kat{A}}
\def\B{\kat{B}}
\def\C{\kat{C}}
\def\K{\kat{K}}
\def\X{\kat{X}}
\def\Y{\kat{Y}}
\def\Z{\kat{Z}}
\def\Sum{\bullet}
\def\tensor{\otimes}
\newcommand{\ccal}{\C}
\def\XX{{\mathbb{X}}}
\def\YY{{\mathbb{Y}}}
\def\AA{{\mathbb{A}}}
\def\NN{{\mathbb{N}}}
\def\Set{{\mathsf{Set}}}
\def\Fin{{\mathsf{Fin}}}
\def\Pos{{\mathsf{Pos}}}
\def\Pre{{\mathsf{Preord}}}
\newcommand{\Coalg}{\mathsf{Coalg}}
\newcommand{\Endo}{\mathsf{Endo}}
\def\Vcat{\V\hspace{-1.5pt}\mbox{-}\hspace{.9pt}{\mathsf{cat}}}
\def\Twocat{\Two\mbox{-}\hspace{.9pt}{\mathsf{cat}}}
\def\d{\mathsf{d}}
\def\cv{\mathsf{c}}
\def\uv{\mathsf{v}}
\newcommand{\coop}{\mathsf{coop}}
\newcommand{\Rel}{\mathsf{Rel}}
\newcommand{\RelPresh}{\mathsf{RelPresh}}
\renewcommand{\to}{\longrightarrow}
\begin{document}


\title[Extending set functors to generalised metric spaces]
      {Extending set functors to generalised metric spaces}

\author{Adriana Balan}
\address{Department of Mathematical Methods and Models, University Politehnica of Bucharest, Romania}
\email{adriana.balan@mathem.pub.ro}

\author{Alexander Kurz}
\address{Schmid College of Science and Technology, Chapman University, Orange, CA, USA}
\email{akurz@chapman.edu}

\author{Ji\v{r}\'{\i} Velebil}
\address{Department of Mathematics, Faculty of Electrical Engineering, 
Czech Technical University
         in Prague, Czech Republic}
\email{velebil@math.feld.cvut.cz}

\thanks{%
The first author acknowledges the support of University Politehnica of Bucharest, through the Excellence Research Grants Program, UPB--GEX, grant ID 252 UPB--EXCELEN\c{T}A--2016, research project no. 23/26.09.2016. 
\newline
The third author acknowledges the support by the grant  No.~P202/11/1632 of the Czech Science Foundation.} 

\keywords{generalised metric spaces, domain theory, denotational semantics, type theory, coalgebra,  enriched category theory, Kan extensions, density presentation}

\amsclass{18B35, 18D20}

\subjclass{}

\begin{abstract}

For a commutative quantale $\V$, the category $\Vcat$ can be perceived as a category of generalised metric spaces and non-expanding maps. 
We show that any type constructor $T$ (formalised as an endofunctor on sets) can be extended in a canonical way to a type constructor $T_\V$ on $\Vcat$. 
The proof yields methods of explicitly calculating the extension in concrete examples, which cover well-known notions such as the Pompeiu-Hausdorff metric as well as new ones. 

\medskip\noindent 
Conceptually, this allows us to to solve the same recursive domain equation $X\cong TX$ in different categories (such as sets and metric spaces) and we study how their solutions (that is, the final coalgebras) are related via change of base.

\medskip\noindent 
Mathematically, the heart of the matter is to show that, for any commutative quantale $\V$, the ``discrete'' functor $\Set\to \Vcat$ from sets to categories enriched over $\V$ is $\Vcat$-dense and has a density presentation that allows us to compute left-Kan extensions along $D$. 
\end{abstract}

\maketitle

\tableofcontents

\section{Introduction}\label{sec:intro}

Solving recursive domain equations 
$$X\cong TX$$  
is an important method to define mathematical models of computation in which recursive equations have unique solutions \cite{smyth-plotkin}.
The technique was introduced by Dana Scott \cite{scott:continuouslattices} in order to give a model of the untyped lambda calculus. 
Following a suggestion of Lawvere, Scott also emphasises that this solution arises as a limit of a chain constructed in a systematic fashion from a certain functor $T$ on a category of continuous lattices. 
In fact, Scott's solution is then the final $T$-coalgebra
$$X\to TX$$ 
constructed in the by now standard way called the final coalgebra sequence.
In domain theory \cite{abra-jung:dt}, this construction has been employed to give solutions to domain equations $X\cong TX$ for functors $T$ on sophisticated base categories such as metric spaces, measurable spaces and many more. Indeed,  there is a considerable variety of interesting categories of domains, each supporting different properties and type constructors. It may not always be obvious which category of domains is the most appropriate for a given modelling task, and the research into finding new good classes of domains still continues.

\medskip\noindent
While an unusual property in most areas of mathematics, in domain theory  final coalgebras often coincide with initial algebras. 
So final coalgebras as solutions of domain equations became more prominent only after Aczel in his work on non-well founded sets~\cite{aczel:book} showed that many important domain equations can be solved in the category of sets, a category in which initial algebras and final coalgebras do not coincide and in which the final coalgebras are the ones that provide the desired solutions to recursive equations.
Subsequent work by Rutten \cite{rutten:uc} and many others showed that streams, automata, probabilistic and other systems can be successfully described by functors on $\Set$, the category of sets and functions. 
Moreover, working with coalgebras over $\Set$ instead of over more complicated domains, allowed many powerful results to emerge, in particular on modal logics for coalgebraic systems and on formats of systems of equations that have unique solutions in final coalgebras.

\medskip\noindent
On the other hand, studying these techniques for richer base categories than $\Set$ has also been an important topic.
Coalgebras over categories of algebras often correspond to more powerful automata in which the states are equipped with extra memory. 
And coalgebras over partial orders and metric spaces allow us to capture simulation instead of bisimulation \cite{hugh-jaco:simulation} or metric bisimulations for quantitative systems \cite{rutten:cmcs98,worrell:cmcs00}. 
To summarise, while coalgebra successfully promoted simpler set-based models of computation, it also extended the class of interesting domains in which to solve domain equations.

\medskip\noindent
This increasing variety of categories of domains makes it necessary to find systematic ways to relate them. 
For example, in this paper, one of the questions we ask is the following. 
What does it mean to solve the same domain equation
$$X\cong TX$$
in different categories? 
On the face of it, this question does not make sense as the definition of a functor $T:\XX\to \XX$ depends on the base category $\XX$. 
So what does it mean to be the ``same'' functor on two different base categories $\XX$ and $\YY$?
We take our answer from Kelly's work on enriched category theory and consider two functors $T:\XX\to\XX$ and $\overline T:\YY\to\YY$ the same if there is a dense ``discrete'' functor $D:\XX\to\YY$ and $\overline T$ is the left-Kan extension of $DT$ along $D$
\begin{equation*}
\xymatrix{
\YY \ar[r]^{\overline T} & \YY\\
\XX \ar[r]_T \ar[u]^D& \XX\ar[u]_D
}
\end{equation*}
For example, we know already from previous work \cite{lmcs:bkv} that sets are dense in posets and that the convex powerset functor $\overline{\mathcal P}$ on $\Pos$, the category of partial orders and monotone maps, is the extension (posetification) of the powerset functor ${\mathcal P}$ on $\Set$. 
\begin{equation*}
\xymatrix{
\Pos \ar[r]^{\overline{\mathcal P}} & \Pos\\
\Set \ar[r]_{\mathcal P} \ar[u]^D& \Set\ar[u]_D
}
\end{equation*}
As we will show in Section~\ref{sec:beh}, this implies that the final coalgebra solution of the domain equation $Y\cong \overline{\mathcal P} Y$ is the same, in a suitable sense, as the the one of $X\cong \mathcal PX$.

\medskip\noindent
Another question that we study is how to define type constructors on ``richer'' categories from type constructors on ``easier'' categories. 
For example, it is well-known that a power domain for simulation is given by the ``up-set functor'' $\mathcal U:\Pos\to\Pos$, which is the left Kan extension of $\mathcal P_u$ along $D$
\begin{equation*}
\xymatrix{
\Pos \ar[r]^{{\mathcal U}} & \Pos\\
\Set \ar[ru]_{\mathcal P_u} \ar[u]^D& 
}
\end{equation*}
where $\mathcal P_u(X)$ is again the powerset of $X$, but now \emph{ordered by inclusion}. 
Section~\ref{sec:beh} shows that the solution of a ``richer'' equation such as $Y\cong \mathcal U Y$ classifies the same notion of bisimulation as the solution of the corresponding ``easier'' equation, here $X\cong \mathcal P X$, but also carries an ordered structure that classifies simulation.

\medskip\noindent
The purpose then of this paper is to extend the observations above from posets to metric spaces and, more generally, to the category $\Vcat$ of categories enriched over a commutative quantale $\V$. The essence is to show that the discrete functor 
$$D:\Set\to\Vcat$$
is dense. Importantly, the proof will exhibit a formula that allows us to compute the left Kan extensions along $D$ in many concrete examples. 

\medskip\noindent
What is modelled by $\V$ and what role does $\Vcat$ play? 

\medskip\noindent
To say that $\V$ is a commutative quantale is to say that $\V$ is a lattice of ``truth values'' or ``distances''. 
The leading example is the lattice of real numbers $[0,\infty]$ which is promoted from a lattice to a quantale when equipped with the extra structure of addition of distances, important to capture the triangle inequality.

\medskip\noindent
Objects of $\Vcat$ are then ``categories enriched over $\V$'', that is, sets $\X$ equipped with a distance $\X(x,y)\in\V$. To say that $\X$ is a ``category enriched over $[0,\infty]$ with $+$'' is to say that $\X$ satisfies the triangle inequality
$$ \X(x,y)+\X(y,z) \ge_\mathbb R \X(x,z).$$
Thus, one reason to work with enriched categories is that many structures of interest such as posets and metric spaces appear themselves as categories.
Another one, emphasised in \cite{smyth-plotkin}, is the importance of  homsets carrying ordered/metric structure.
A third reason, important for this paper, will be discussed now in more detail, namely that enriched categories make available a richer variety of colimits.

\medskip\noindent
That $D:\Set\to\Pos$ is dense implies that every poset is a colimit of sets. But how can we construct ordered sets by taking quotients of discrete sets? The crucial point must be to work with a more general notion of quotient, namely with quotients that do not merely add equations between elements (coequalisers), but with quotients that add inequations. 

\medskip\noindent
Such a richer notion of quotient is automatically provided to us in $\Pos$ or $\Vcat$ as a special case of a so-called weighted (or indexed) colimit, a notion native to the theory of enriched categories.
In this paper, we only need some special cases of weighted colimits that are reviewed in detail in Section~\ref{sec:preliminaries}. 

\medskip\noindent
In the case of posets, the basic idea is easily explained. 
Every poset is a weighted colimit of discrete posets. 
How? Let $X$ be a poset and  write $X_0$ for its set of elements and $X_1\subseteq X_0\times X_0$ for its order. 
Then $X$ is the ``ordered coequalizer'', or rather ``coinserter'', as it is known in the literature
\begin{equation}
\label{eq:poset_coins1}
\xymatrix@C=35pt{
X_1 
\ar@<0.5ex>[r]^{} 
\ar@<-0.5ex>[r]_{} 
& 
X_0 \ar[r]^-{} & X}.
\end{equation} 
It is important here that the coinserter turns the discrete posets $X_0$, $X_1$ into a genuine poset $X$. 
This also makes it clear how to compute the left Kan extension $\overline T$ of $T$ by applying $T$ to the parallel pair of \eqref{eq:poset_coins1} and then taking the coinserter in $\Pos$
\begin{equation}
\label{eq:poset_coins2}
\xymatrix@C=35pt{
T X_1 
\ar@<0.5ex>[r]^{} 
\ar@<-0.5ex>[r]_{} 
& 
T X_0 \ar[r]^-{} &\overline T X}
\end{equation} 
This formula, describing the left-Kan extension as a certain weighted colimit of discrete $\V$-categories, allows us to compute such Kan extensions in many concrete situations. For example, in \cite{lmcs:bkv} we showed that it follows easily from \eqref{eq:poset_coins2} that the extension of $\mathcal P$ from $\Set$ to $\Pos$ is the convex powerset functor. 

\medskip\noindent
A conceptual way of summarising this construction is to say that the coinserters of \eqref{eq:poset_coins1} form a ``density presentation'' of $D:\Set\to\Pos$. But density presentations work best when $D$ is fully faithful, which coincides with the quantal being integral, whereas the generalisation of the formula \eqref{eq:poset_coins2} also applies in case $D$ is not fully faithful.

\paragraph{\bf Summary of main results.} 

\begin{enumerate}

\item We prove in Theorem~\ref{thm:Lan} that \eqref{eq:poset_coins2} can be generalised to compute left Kan extensions along $D:\Set\to\Vcat$ for any commutative quantale $\V$. As a corollary, we obtain that $D:\Set\to\Vcat$ is dense. If $D$ is fully faithful, we obtain a density presentation in Theorem~\ref{thm:dense-pres}.  Those functors that arise as left Kan extensions along $D$ are characterised by preserving certain colimits in Theorems \ref{thm:char-discrete-arities} and \ref{thm:char-Vcatification}. 

\item  It is well known that the fully-faithfulness of $D$ is equivalent to the unit $H\to (\Lan{D}H)D$ of the left Kan extension of $H$ along $D$ being an isomorphism for {\em all} $H$. We show in Proposition~\ref{prop:D-ff} that $D$ is fully faithful if and only if the quantale $\V$ is integral, which is the case in all the examples we pursue. We also characterise, in the general case, those functors $H$ for which the unit is an isomorphism in Theorem~\ref{thm:unitiso}.

\item We use that $\V$-categories are relational presheaves to show in Theorem~\ref{thm:wpbcontrelpresh} how the extension of weak pullback preserving $\Set$-functors can be computed from their relation lifting. 
This is often the easiest route to concrete computations, the  Pompeiu-Hausdorff metric being Example~\ref{ex:Vcatif-powerset}.   

\item Theorems \ref{thm:beh-vcatification}  and \ref{thm:beh-discrete-arities} formalise the intuition that 
the final coalgebra over $\Vcat$ of 
a ``functor $T:\Set\to\Set$  equipped with a $\V$-metric'' is the final $T$-coalgebra over $\Set$ equipped with an appropriate metric. 
In particular, both final coalgebras determine the same equivalence relation (bisimulation), but the final coalgebra over $\Vcat$ gives refined metric information about states that are not bisimilar in the sense of measuring how far such states are from being equal. 
\end{enumerate}

\noindent
The formulas for computing left Kan extensions are \eqref{eq:shortest-path} in the general case and \eqref{eq:wpb} for weak pullback preserving functors. Examples of left Kan extensions are in Sections~\ref{sec:examples}--\ref{sec:Vmetric-functors}. Closure properties of $\Vcat$-ifications are studied in Section~\ref{sec:closure} and 2-categorical properties of the category of coalgebras over $\Vcat$ in Section~\ref{sec:2-cat}.

\medskip\noindent The present paper is a revised and extended version of \cite{bkv:V-cat}.


\section{Preliminaries}
\label{sec:preliminaries}

In this section we gather all the necessary technicalities
and notation from category theory enriched in a complete and 
cocomplete symmetric monoidal closed category that we will use later. 
For the standard notions of  enriched categories, enriched functors and enriched natural transformations we refer to Kelly's book~\cite{kelly:book} and for their importance to logical methods in computer science see for example~\cite{lawvere:gen_metric_spaces,turi-rutten,wagner,worrell:cmcs00,worrellPhD}.
Readers familiar with enriched category theory are invited to skip these preliminaries and pass directly to Section~\ref{sec:extension} after taking note of Section~\ref{sec:discrete functor} on the fully-faithfulness of the discrete functor $D:\Set \to \Vcat$.

\medskip\noindent
 We will mainly use two prominent enrichments: that in a commutative quantale $\V$ and that in the category $\Vcat$ of {\em small\/} $\V$-categories and $\V$-functors for a commutative quantale $\V$. 
We spell out in more detail how the relevant notions look like and carefully write all the enrichment-prefixes. 
In particular, the underlying category of an enriched category will be denoted by the same symbol, followed by the subscript ``$o$'' as usual. 


\subsection{$\V$-categories as generalised metric spaces}

\ 

\bigskip\noindent
Suppose $\V=(\V_o,\tensor,e,[{-},{-}])$ is a {\em commutative quantale\/}.
More in detail: $\V_o$ is a complete lattice, equipped with
a commutative and associative monotone binary operation 
$\tensor$, called the {\em tensor\/}. 
We require the element $e$ to be a {\em unit\/} of tensor. 
Furthermore, we require every monotone map ${-}\tensor r:\V_o\to\V_o$ to have a right adjoint $[r,{-}]:\V_o\to\V_o$. We call $[{-},{-}]$ the {\em internal hom\/} of $\V_o$.
We will suppose that $\V_o$ is non-trivial, that is, $\V_o$ is not a singleton (or equivalently, $e \neq \bot$). Recall that the quantale is said to be {\em integral} if $e = \top$.\footnote{%
A commutative integral quantale is sometimes called a {\em complete residuated lattice.}
}

\medskip\noindent
Commutative quantales are complete and cocomplete symmetric monoidal closed categories.
Therefore, one can define $\V$-categories, $\V$-functors, and $\V$-natural transformations. 
Before we say what these are, let us mention several examples of commutative quantales.

\begin{exas}
\label{ex:quantale}
\hfill
\begin{enumerate}
\item \label{ex:quantale_two}
The two-element chain $\Two=\{0,1\}$ with the usual order and tensor $r\tensor s=r\wedge s$. The internal hom is implication.

\item \label{ex:quantale_infinity} 
The real half line $([0,\infty], \geq_\mathbb R)$, with (extended) addition as tensor product. The internal hom $[r,s]$ is truncated minus, that is,  $[r,s]=\textsf{ if } r\ge_\mathbb R s\textsf{ then } 0 \textsf{ else } s-r$.

\item \label{ex:quantale_unit}
The unit interval $([0,1], \geq_\mathbb R)$ with tensor product $r\tensor s=\max(r,s)$. 
The internal hom is given by $[r,s]=\textsf{ if }r\ge_\mathbb R s\textsf{ then } 0 \textsf{ else } s$.

\item \label{ex:quantale_prob} 
The poset of all monotone functions $f:[0,\infty]\to [0,1]$ such that the equality $f(r)=\bigvee_{s < r} f(s)$ holds, with the pointwise order. 
It becomes a commutative quantale with the tensor product
$$
(f \tensor g) (r) = \bigvee_{s + s' \leq r} f(s)\cdot g(s')
$$
having as unit the function mapping all nonzero elements to $1$, and $0$ to itself~\cite{hofmann+reis}. 
\item \label{ex:quantale_three}
Let $\underline n=\{0,\ldots n-1\}$ be an ordinal equipped with a monotone, commutative, idempotent operation satisfying $0\otimes i=0$ for all $i\in\underline n$. 
Then there are unique $e$ and $[-,-]$ such that $(\underline n, \otimes, e, [-,-])$ is a quantale \cite[Prop 2.5]{Casley-phd}. 
In the case $n=2$ we just obtain Item~\eqref{ex:quantale_two} above. 
In case $n=3$ there are exactly two ways of turning $\underline 3$ into a commutative quantale with an idempotent tensor, determined by choosing either $1\tensor 2=2$ or $1\tensor 2=1$~\cite[Cor 2.6]{Casley-phd}. %
We briefly explain the associated $\V$-categories in Example~\ref{exle:casley}. 
The three-element quantale $\underline 3$ which is determined by $e=1$ is the smallest non-integral quantale, in the sense that it embeds into any other non-integral $\V$.%

\item
\label{ex:free-quant} Let $(\mathsf M, \cdot, e)$ be a monoid. 
Then $\mathcal P(\mathsf M)$, the powerset of $\mathsf M$, becomes a quantale ordered by inclusion with tensor $S \otimes S' = \{x \cdot x'\mid x\in S, x'\in S'\}$ and unit $\{e\}$. 
In fact, $\mathcal P(\mathsf M)$ is the free quantale over the monoid $\mathsf M$~\cite{rosenthal} and  $\mathcal P(\mathsf M)$ is commutative if $\mathsf M$ is commutative. 
If $\mathsf M$ is the free monoid over an alphabet of size at least two, then $\mathcal P(\mathsf M)$ is the non-commutative non-integral quantale of languages over the alphabet.
\end{enumerate}
\end{exas}

\begin{rem}
If we think of the elements of the quantale as truth values, then $\le$ is external implication, $e$ is true, $\bot$ is false, $\tensor$ is a conjunction and $[-,-]$ is internal implication. 
Some standard logical laws such as  $[e,r]=r$ and $r \tensor [r,s]\le s$ and $r\le[s,s\tensor r]$ and $s\le r \ \Leftrightarrow \ e\le [s,r]$ hold in all quantales.
If we consider the elements of the quantale as representing distances, the order of the quantale is opposite to the natural order of distances, with $0$ as top and $\infty$ as bottom. 
To reconcile the two points of view, we can think of distance as ``distance from truth'', so that a distance of 0 corresponds to true and a distance of $\infty$ to false.
\qed
\end{rem}

\medskip\noindent
A ({\em small\/}) {\em $\V$-category\/} $\X$ consists of a (small) set of objects, together with an object $\X(x',x)$ in $\V_o$ for each pair $x'$, $x$ of objects, subject to the following axioms
$$
e\leq\X(x,x),
\quad
\X(x',x)\tensor\X(x'',x')\leq\X(x'',x)
$$
for all objects $x''$, $x'$ and $x$ in $\X$. 

\noindent A $\V$-category $\X$ is called \emph{discrete} if 
\begin{equation}\label{eq:discrete V-category}
\X(x',x) = \begin{cases} 
\, e \ \ , \ \ x'=x
\\
\bot \ \ , \ \ \mbox{otherwise}
\end{cases}
\end{equation}

\medskip\noindent
A {\em $\V$-functor\/} $f:\X\to\Y$ is given by the object-assignment $x\mapsto fx$, such that
$$
\X(x',x)\leq\Y(fx',fx)
$$
holds for all $x'$, $x$.

\medskip\noindent
A {\em $\V$-natural transformation\/} $f\to g$ is given
whenever
$$
e\leq\Y(fx,gx)
$$ 
holds for all $x$. Thus, there is at most one $\V$-natural transformation
between $f$ and $g$.

\medskip\noindent
$\V$-categories and $\V$-functors form a category which we denote by $\Vcat_o$ (actually, a 2-category having $\V$-natural transformations as 2-cells).

\begin{exa}\label{exle:V-categories}
\hfill
\begin{enumerate}
\item \label{exle:Twocat}
The two-element chain $\Two$ is a commutative quantale. 
A small $\Two$-category\footnote{ Not to be confounded with the notion of a 2-category, that is, a $\mathsf{Cat}$-enriched category.} $\X$ is  precisely a {\em preorder\/}, where $x'\leq x$ iff $\X(x',x)=1$, while a $\Two$-functor $f:\X \to \Y$ is a monotone map. 
A $\Two$-natural transformation $f\rightarrow g$ expresses that $fx\leq gx$ holds for every $x$. Thus $\Two\mbox{-}\mathsf{cat}$ is  the category  $\Pre$ of  preorders and monotone maps. 
It plays an important role not only because we study $\Vcat$ by generalising from $\Two$-cat, but also because the definition
\begin{equation}
\label{eq:V-metric=>order}
x\le_\X y \ \Longleftrightarrow \ e\le \X(x,y).
\end{equation}
equips every $\V$-category  $\X$ with the structure of a preorder $\le_\X$. 
In case $\X$ is $\V$, we have ${\le_\X}={\le}$. In the following, we will drop the subscript in $\le_\X$.
\medskip
\item \label{ex:V as V-category itself}
$\V$ is itself a $\V$-category with $\V(r,s)=[r,s]$. This is of fundamental importance, at least because the internal hom $\V(r,s)$ usually has a richer structure than the external hom $\V_o(r,s)$. For example in case $\V=[0,\infty]$, the external hom is only two-valued whereas the internal hom is $[0,\infty]$-valued. 
\end{enumerate}
\end{exa}

\noindent A good intuition is that $\V$-categories are possibly non-symmetric metric spaces and $\V$-functors are non-expanding maps. 
This intuition goes back to Lawvere~\cite{lawvere:gen_metric_spaces}. 
We show next some examples that explain this intuition. 
A good resource is also~\cite{rutten:ultrametric_spaces}. 

\begin{exas}\label{exs:Vcats=metric_spaces}
\begin{enumerate}
\item \label{ex:V=[0,infty]}
Let $\V$ be the real half line $([0,\infty],\geq_\mathbb R, +, 0)$ as in Example~\ref{ex:quantale}\eqref{ex:quantale_infinity}.
A small $\V$-category can be identified with a set $X$ equipped with a mapping $d_X:X\times X\to [0,\infty]$ such that $(X,d_X)$ is a {\em generalised metric space\/}. 
The generalisation of the usual notion is three-fold. 
First, $d_X$ is a pseudo-metric in the sense that two distinct points may have distance $0$. 
Second, $d_X$ is a quasi-metric in the sense that distance is not necessarily symmetric. 
Third, distances are allowed to be infinite, which has the important consequence that the category of generalised metric spaces has colimits (whereas metric spaces do not even have coproducts).
  
\noindent A $\V$-functor $f:(X,d_X)\to (Y,d_Y)$ is then a exactly a {\em non-expanding mapping\/}, that is, one satisfying
the inequality $d_Y(fx',fx)\leq_\mathbb R d_X(x',x)$ for every 
$x,x'\in X$.

\noindent The existence of a $\V$-natural transformation $f\to g$ means that $\bigvee_x d_Y(fx,gx)=0$, that is, the distance 
$d_Y(fx,gx)$ is $0$, for every $x\in X$, or also that $fx\le gx$ with respect to the order discussed in Example~\ref{exle:V-categories}\eqref{exle:Twocat}.  
\item 
For the unit interval 
$\V=([0,1], \geq_\mathbb R, \max, 0)$
from Example~\ref{ex:quantale}\eqref{ex:quantale_unit}, 
a $\V$-category is a {\em generalised ultrametric space\/} 
$( X,d_X:X \times X \to [0,1])$~\cite{rutten:ultrametric_spaces,worrell:cmcs00}. 
Again, the slight generalisation of the usual notion lies in the fact that 
the distance function $d_X$ is not necessarily symmetric and 
$d_X(x',x)=0$ does not necessarily entail $x=x'$. Similarly, $\V$-functors are precisely the non-expanding maps, and the existence of a $\V$-natural transformation $f\to g:( X,d_X) \to ( Y, d_Y)$ 
means, again, that $\bigvee _x d_Y (fx,gx)=0$, that is, the distance 
$d_Y(fx,gx)$ is $0$, for every $x\in X$. 
\item 
Using the quantale $\V$ from 
Example~\ref{ex:quantale}\eqref{ex:quantale_prob} 
leads to
{\em probabilistic metric spaces\/}: for a $\V$-category $\X$, and for every
pair $x$, $x'$ of objects of $\X$, the hom-object is a function %
$\X(x',x):[0,\infty]\to [0,1]$ %
with the intuitive meaning $\X(x',x)(r)=s$ holds
iff $s$ is the probability that the distance from $x'$ to $x$
is smaller than $r$. See~\cite{flag-kopp:continuity-spaces,hofmann+reis}.

\item The categories enriched in the free quantale $\mathcal P(\mathsf M)$ of Example~\ref{ex:quantale}\eqref{ex:free-quant} can be perceived as $\mathsf M$-labeled automata~\cite{betti,rosenthal:paper}. 
The ``distance'' between two states is the language connecting them.
In particular, if $\mathsf M$ is the free monoid over an alphabet, the enriched hom $\X(x,y)$ between two states $x,y$ of an automaton $\X$ is the language accepted by the automaton $\X$, considered with initial state $x$ and final state $y$. However, this is an example outside the scope of the paper, since in this case $\V=\mathcal P(\mathsf M)$ is not commutative.
 
\end{enumerate}
\end{exas}
     

\subsection{$\V$-categories as relational presheaves}
\label{sec:relpresh}

\ 

\bigskip\noindent
A $\V$-category $\X$ cannot only be considered as generalised metric space, but also as a set $X$ equipped with a collection $(X_r)_{r\in\V_o}$ of binary relations given by 
\begin{equation}\label{eq:Xr}
X_r = \{(x',x)\in X \times X \mid r\le \X(x',x)\}.
\end{equation}
In the case of the generalised metric spaces discussed above, the collection%
\footnote{ $X_\infty, X_0$ are redundant since $\infty$ is bottom and $X_0=\bigcap\{X_r\mid r>_{\mathbb R} 0\}$.}
$(X_r)_{0<r<\infty}$  can be considered as a basis for the quasi-uniformity \cite{fletcher} associated with $\X$. 
But interest in the collection $(X_r)_{r\in\V}$ can also arise from considerations independent of the view of $\V$-categories as metric spaces as the following example from concurrency theory demonstrates.

\begin{exa}\label{exle:casley}
A category $\X$ enriched over the quantale $\underline n$ of Example~\ref{ex:quantale}\eqref{ex:quantale_three} can be understood as a set equipped with $n-1$ transitive relations 
$$X_{n-1}\subseteq \ldots \subseteq X_i\subseteq \ldots \subseteq X_1.$$ 
given by $X_i=\{(x',x)\mid i\le \X(x',x) \}$, $0<i<n$. Following \cite{Casley-phd,Casley-etal}, the $\V$-enriched categories for the three-element quantales $\V=(\underline 3, \otimes, e, [-,-])$  
of Example~\ref{ex:quantale}\eqref{ex:quantale_three} can be interpreted as well-known models of concurrency given by sets equipped with two relations (in the terminology of the $(X_r)_{r\in\V_o}$ above, the relation $X_0$ is redundant since $0$ is bottom). 

\medskip\noindent
The first case, which is determined by $e=1$, accounts for the {\em prossets} of Gaifman and Pratt~\cite{gaifman-pratt}.
Explicitly, the objects of a $\V$-category can be seen as events subject to a schedule, endowed with a preorder ${x\leq y}$ given by $X_1$ (with the interpretation that ``$x$ happens no later than $y$'') and a binary relation $x\prec y$ given by $X_2$ (which is intended to mean ``$x$ happens strictly earlier than $y$''). 
Then $X_2\subseteq X_1$ says that strict precedence implies weak precedence, while the multiplication law $1\otimes 2=2$ reflects the prosset-law that $x\le y \prec z \le w$ implies $x\prec w$.%
\footnote{See also~\cite[Definition I-1.11]{ghklms:ContLatDom-book}, where $\prec$ is called an auxiliary relation.} %

\medskip\noindent
The second case, which is determined by $e=2$, is due to Gaifman~\cite{gaifman}. 
The relation $x<^ty$ given by $X_1$ is interpreted as ``$x$ precedes $y$ in time'' and the relation $x<^c y$ given by $X_2$ is interpreted as ``$x$ causally precedes $y$''. 
$X_2\subseteq X_1$ captures that causal precedence implies temporal precedence and the multiplication law $1\otimes 2=1$ reflects that $x<^t y <^c z <^t w$ implies $x<^t w$.
\qed
\end{exa}

\medskip\noindent
The idea illustrated in the two examples above can be formalised as a relational presheaf \cite[Chapter~3.4]{rosenthal}, that is, as a (lax) monoidal functor
\begin{align*}
X_\_ : \V & \to\Rel\\
r \, & \  \mapsto \ X_r 
\end{align*}
which satisfies
\begin{gather}
\label{eq:monoidal1}
\Id \subseteq X_e\\
\label{eq:monoidal2}
X_r \cdot X_s\subseteq X_{r\otimes s} 
\end{gather}
Moreover such a functor comes from a $\V$-category iff  
$$ 
X_{\bigvee_{i\in I} r_i} = \bigcap_{i\in I} X_{r_i}
$$
that is, the presheaf is continuous. 
We present this result in some detail, because it is related to the $\V$-nerves of Definition~\ref{def:Vcoinserter} and because it prepares the grounds for Theorem~\ref{thm:wpbcontrelpresh}.

\medskip\noindent
We will follow Chapter~3.4 of~\cite{rosenthal}, specialised  
to an one-object quantaloid, that is, to a  commutative
quantale $\V=(\V_o,\tensor,[-,-],e)$.

\medskip\noindent
Let $\Sigma\V$ denote the {\em suspension\/} of $\V$. 
That is, $\Sigma\V$ is an {\em ordered\/} category on one object $*$, with $\Sigma\V(*,*)=\V_o$. 
The composition in $\Sigma\V$ is given by $\tensor$ and the unit $e$ serves as an identity.
Further, let $\Rel$ be the 2-category of {\em ordinary\/} relations.

\medskip\noindent
The {\em ordinary\/} category $\RelPresh$ of {\em relational presheaves\/} and their morphisms is defined as follows:
\begin{enumerate}
\item 
A relational presheaf $\frak{X}:(\Sigma\V)^\coop\to\Rel$ is a {\em lax\/} 2-functor. 
That is, if we denote $X=\frak{X}(*)$, the inclusions
$$
\Id_X\subseteq \frak{X}(e),
\quad
\quad
\frak{X}(r)\cdot \frak{X}(s)\subseteq \frak{X}(r\tensor s)
$$
hold in $\Rel(X,X)$, with ``$\cdot$'' denoting relational composition. 
Moreover, the inclusion 
\begin{equation}\label{eq:monot-rel-presheaf}
\frak{X}(r)\subseteq \frak{X}(s)
\end{equation}
holds whenever $r\geq s$ holds in $\V$.

\item
A relational morphism from $\frak{X}$ to $\frak{Y}$ is  a lax natural
transformation $\frak f$ from $\frak{X}$ to $\frak{Y}$ that has maps as components. 
That is, if we denote $X=\frak{X}(*)$, $Y=\frak{Y}(*)$ and write $f$ for $\frak{f}_{*}$, then for every $r$ the inclusion
\begin{equation}\label{eq:rel-morph}
\vcenter{
\xymatrix{X
\ar[d]|-{\object @{/}}_{\frak{X}(r)}
\ar[r]|-{\object @{/}}^{f_\diamond}
\ar@{}[dr]|{\subseteq}
&
Y
\ar[d]|-{\object @{/}}^{\frak{Y}(r)}
\\
X
\ar[r]|-{\object @{/}}_{f_\diamond}
&
Y
}}
\end{equation}
holds. 
Above, $f_\diamond$ is the graph relation of $f:X\to Y$.
\end{enumerate}

\begin{exa}
\label{ex:cat-as-presheaf}
Every small $\V$-category $\X$ can be turned into a relational
presheaf 
$$
\Phi(\X):(\Sigma\V)^\coop\to\Rel
$$
as follows:
\begin{enumerate}
\item 
Let $\Phi(\X)(*)=X_\extra$, where we write $X_\extra$ for the set of all objects of $\X$.\footnote{ In the introduction, in the special case of preorders, we used the more familiar notation $X_0$ instead of $X_\extra$, but in the general case of $\V$-categories we now have examples where 0 is an element of $\V$ and the notation $X_0$ has a meaning given by \eqref{eq:Xr}.}
\item
Put $\Phi(\X)(r)=\{ (x',x)\mid r\leq \X(x',x)\}$ for every $r$.
Then the relations
$$
\Id_{X_\extra}\subseteq \Phi(\X)(e),
\quad
\quad
\Phi(\X)(r)\cdot \Phi(\X)(s)
\subseteq 
\Phi(\X)(r\tensor s)
$$
hold in $\Rel(X_\extra,X_\extra)$ precisely because $\X$ is a $\V$-category.
\end{enumerate}
Observe that the presheaf $\Phi(\X)$ satisfies an additional condition: the equality
$$
\Phi(\X)(\bigvee_{i\in I} r_i)
=
\bigcap_{i\in I} \Phi(\X)(r_i)
$$
holds for any family $\{r_i\mid i\in I\}$ of elements of $\V$, since
$$
\{ (x',x)\mid \bigvee_{i\in I} r_i\leq \X(x',x)\}
=
\bigcap_{i\in I} 
\{ (x',x)\mid r_i\leq \X(x',x)\}
$$
It is easy to see that any $\V$-functor $f:\X\to\Y$ yields a morphism $\Phi(f):\Phi(\X) \to \Phi(\Y)$ of relational presheaves. 
Indeed, denote by $f_\extra:X_\extra\to Y_\extra$ the object-assignment of $f$. 
Then the inclusion
$$
(f_\extra)_\diamond\cdot \Phi(\X)(r)
\subseteq
\Phi(\Y)(r)\cdot (f_\extra)_\diamond
$$ 
means the following
\[
\mbox{if $r\leq\X(x',x)$, then $r\leq\Y(fx',fx)$,
for all $x'$, $x$ in $X_\extra$}
\]
But this holds precisely since $f$ is a $\V$-functor.
\end{exa}

\begin{defiC}[{\cite[Definition~3.4.1]{rosenthal}}]
A relational presheaf $\frak{X}$ is called {\em continuous\/}, if
$$
\frak{X}(\bigvee_{i\in I} r_i)
\supseteq
\bigcap_{i\in I} \frak{X}(r_i)
$$
holds for any family $\{r_i\mid i\in I\}$ of elements of $\V$.
The full subcategory of $\RelPresh$ spanned by continuous relational presheaves is denoted by $\RelPresh_c$.
\end{defiC}

\begin{rem}
The inclusion
$$
\frak{X}(\bigvee_{i\in I} r_i)
\subseteq
\bigcap_{i\in I} \frak{X}(r_i)
$$
holds for any family $\{r_i\mid i\in I\}$ of elements of $\V$ and {\em any\/} relational presheaf $\frak{X}$, because \eqref{eq:monot-rel-presheaf} implies that 
$
\frak{X}(\bigvee_{i\in I} r_i)\subseteq \frak{X}(r_i)
$
holds for any $i$.
\end{rem}

\begin{propC}[{\cite[Proposition~3.4.1]{rosenthal}}]\label{prop:relpresh}
The assignment $\X\mapsto\Phi(\X)$ extends to an {\em ordinary\/} functor $\Phi:(\Vcat)_o\to\RelPresh_c$ which is an equivalence of categories. 
\end{propC}

\begin{proof}
It is easy to see that the processes $\X\mapsto\Phi(\X)$ and $H\mapsto\Phi(H)$ of Example~\ref{ex:cat-as-presheaf} extend to a functor $\Phi:(\Vcat)_o\to\RelPresh_c$. 
Its pseudoinverse $\Psi:\RelPresh_c\to (\Vcat)_o$ sends a continuous relational presheaf $\frak{X}:(\Sigma\V)^\coop\to\Rel$ into the following $\V$-category $\Psi(\frak{X})$:
\begin{enumerate}
\item 
The set of objects of $\Psi(\frak{X})$ is the set $\frak{X}(*)$. 
\item
For every pair $x'$, $x$ in $\frak{X}(*)$ we put
$$
\Psi(\frak{X})(x',x)
=
\bigvee\{r\mid (x',x)\in \frak{X}(r) \}.
$$
Then $\Psi(\frak{X})$ is a $\V$-category for the following reasons:
\begin{enumerate}
\item The inequality
$$
e
\leq
\bigvee\{r\mid (x,x)\in \frak{X}(r) \}
=
\Psi(\frak{X})(x,x)
$$
holds since $\Id\subseteq \frak{X}(e)$ holds.

\item The inequality 
$$
\Psi(\frak{X})(x'',x')\tensor\Psi(\frak{X})(x',x)
\leq
\Psi(\frak{X})(x'',x)
$$
holds, since
\begin{eqnarray*}
\Psi(\frak{X})(x'',x')\tensor\Psi(\frak{X})(x',x)
&=&
\bigvee\{r\mid (x'',x')\in \frak{X}(r) \}
\tensor
\bigvee\{s\mid (x',x)\in \frak{X}(s) \}
\\
&=&
\bigvee_{(x'',x')\in \frak{X}(r) } \
\bigvee_{(x',x)\in \frak{X}(s) }
r\tensor s
\\
&\leq&
\bigvee_{(x'',x)\in \frak{X}(r\tensor s) }
r\tensor s
\\
&=&
\Psi(\frak{X})(x'',x)
\end{eqnarray*}
Above, we have used that $\frak{X}(r)\cdot \frak{X}(s)\subseteq \frak{X}(r\tensor s)$.
\end{enumerate}
\end{enumerate} 
That $\Phi$ and $\Psi$ are essentially inverse to each other is verified as follows:
\begin{enumerate}
\item
Suppose a $\V$-category $\X$ is given. Then the $\V$-category  $\Psi\Phi(\X)$ has the same set of objects as $\X$. 
Moreover,
$$
\Psi\Phi(\X)(x',x)
=
\bigvee\{r\mid r\leq\X(x',x)\}
=
\X(x',x)
$$
\item
Suppose $\frak{X}:(\Sigma\V)^\coop\to\Rel$ is a relational presheaf.
Then the presheaf $\Phi\Psi(\frak{X})$ has the value $\frak{X}(*)$ at $*$ by definition of $\Psi$ and $\Phi$.

\medskip\noindent
We need to prove that $(x',x) \in \frak{X}(r)$ holds iff
$r\leq \bigvee\{ s\mid (x',x)\in \frak{X}(s) \}$ holds.
The implication from left to right is trivial. For the 
converse implication, apply $\frak{X}$ to the inequality
$r\leq \bigvee\{ s\mid (x',x) \in \frak{X}(s) \}$ and use 
continuity of $\frak{X}$:
$$
\frak{X}(r)
\supseteq 
\frak{X}\left(\bigvee\{ s\mid (x',x) \in \frak{X}(s) \}\right)
\supseteq
\bigcap\{ \frak{X}(s)\mid (x',x) \in \frak{X}(s) \}
\ni
(x',x)
$$ 
\end{enumerate}
This finishes the proof.
\end{proof}

\begin{rem}\label{rem:enrichedRelPresh}
The functor $\Psi$ can be extended to an equivalence of {\em $\Vcat$-categories\/} (see the next section for an introductory exposition on $\Vcat$-enriched categories).
This means that every
$$
\RelPresh_c(\frak{X},\frak{Y})
$$
should have the structure of a $\V$-category. 
We do it by transferring this structure from $\Vcat$, using the equivalence of $(\Vcat)_o$ and $\RelPresh_c$.
More precisely, write $\frak{X}=\Phi(\X)$ and $\frak{Y}=\Phi(\Y)$ for some
unique $\V$-categories $\X$ and $\Y$ and put
$$
\RelPresh_c(\frak{X},\frak{Y})
=
[\X,\Y].
$$
\end{rem}


\subsection{Categories enriched in $\Vcat$}\label{sec:V-cat-cat}

\

\bigskip\noindent
Suppose that $\V=(\V_o,\tensor,e,[{-},{-}])$ is a commutative quantale and recall that we denoted by $\Vcat_o$ the (ordinary) category of all small $\V$-categories and
all $\V$-functors between them. The category $\Vcat_o$ has a monoidal closed
structure: The {\em tensor product\/}
is inherited from $\V_o$. Namely, for $\V$-categories $\X, \Y$, put $\X\tensor \Y$ to be the $\V$-category having as objects the corresponding pairs of objects and $\V$-homs given by 
$$
(\X\tensor \Y)((x',y'),(x,y))=
\X(x',x)\tensor \Y(y',y)
$$
The {\em unit\/} for the tensor product is the $\V$-category 
$\One$, with one object $0$ and corresponding $\V$-hom given by $\One(0,0)=e$. 

\medskip\noindent
The $\V$-functor ${-}\tensor \Y:\Vcat_o\to\Vcat_o$ has a right adjoint $[\Y,{-}]$. Explicitly, $[\Y,\Z]$ is the following $\V$-category:
\begin{enumerate}
\item 
Objects of $[\Y,\Z]$ are $\V$-functors from $\Y$ to $\Z$.
\item
The $\V$-``distance'' between two $\V$-functors $f,g:\Y \to \Z$ is 
\begin{equation}\label{eq:Vcat internal hom}
[\Y,\Z](f,g) = \bigwedge_y \Z(fy,gy)
\end{equation}
\end{enumerate}
\noindent We will sometimes write $\Vcat(\Y,\Z)$ or even $\Z^\Y$ instead of $[\Y,\Z]$. 

\medskip\noindent
By~\cite{kelly+lack:locally-bounded}, the symmetric monoidal closed category $\Vcat=(\Vcat_o,\tensor,\One,[{-},{-}])$ is complete and cocomplete, with generators consisting of $\V$-categories of the form $\Two_r$, $r\in \V_o$. 
Here, every $\Two_r$ has two objects $0$ and $1$, with $\V$-homs 
\begin{equation}\label{eq:Two_r}
\Two_r(0,0)=\Two_r(1,1)=e \, , \, \Two_r(0,1)=r \, , \, \Two_r(1,0)=\bot
\end{equation}

\medskip\noindent
Since $\Vcat$ is a symmetric monoidal closed category, we can define $\Vcat$-enriched categories, $\Vcat$-functors and $\Vcat$-natural transformations.

\medskip\noindent
 A ({\em small\/}) {\em $\Vcat$-category\/} $\XX$ consists of a (small) set of objects $X$, $Y$, $Z$, \dots, a small $\V$-category $\XX(X,Y)$ for every pair $X$, $Y$ of objects, and $\V$-functors
$$
u_X:\One\to \XX(X,X),
\quad
c_{X,Y,Z}:\XX(Y,Z)\tensor\XX(X,Y)\to\XX(X,Z)
$$
that represent the identity and composition and satisfy the usual axioms~\cite{kelly:book}:
$$
\xymatrix@R=30pt@C=40pt{
\XX(Z,U)\tensor \XX(Y,Z) \tensor \XX(X,Y) 
\ar[r]^-{\Id \tensor c_{X,Y,Z}} \ar[d]_{c_{Y,Z,U} \tensor \Id}
& 
\XX(Z,U)\tensor \XX(X,Z) 
\ar[d]^{c_{X,Z,U}}
\\
\XX(Y,U)\tensor \XX(X,Y) 
\ar[r]^{c_{X,Y,U}} 
&
\XX(X,U)
}
$$
$$
\xymatrix@R=30pt@C=27pt{
\One \tensor \XX(X,Y) 
\ar[r]^-{u_Y \tensor \Id} 
\ar[dr]_{\cong}
&
\XX(Y,Y)\tensor \XX(X,Y) 
\ar[d]^{c_{X,Y,Y}}
& 
\XX(X,Y)\tensor \XX(X,X) 
\ar[d]_{c_{X,X,Y}}
&
\XX(X,Y) \tensor \One 
\ar[l]_-{\Id \tensor u_X}
\ar[dl]^\cong
\\
&
\XX(X,Y)
& 
\XX(X,Y)
}
$$
Objects of $\XX(X,Y)$ will be denoted by $f:X\to Y$ and their $\V$-distance by $\XX(X,Y)(f,g)$ in $\V$. 
The action of $c_{X,Y,Z}$ at objects $(f',f)$ in $\XX(Y,Z)\tensor\XX(X,Y)$ is denoted  by $f'\cdot f$, and for their distances the inequality below (expressing that $c_{X,Y,Z}$ is a $\V$-functor) holds:
$$
\left(\XX(Y,Z)\tensor\XX(X,Y)\right)(
(f',f),(g',g)
)
\leq
\XX(X,Z)(f'\cdot f,g'\cdot g)
$$

\begin{exa}
\label{ex:Vcat-categories}
\hfill
\begin{enumerate}
\item Start with the simplest quantale, namely $\V=\Two$. 
We have seen in Example~\ref{exle:V-categories}\eqref{exle:Twocat} that preorders are $\Two$-categories, and that monotone maps are $\Two$-functors, hence that $\Twocat=\Pre$. 
Then $\Twocat$-categories are categories with ordered homsets, such that composition is monotone in both arguments.
Examples of $\Twocat$-categories are the category $\Twocat=\Pre$ of preorders itself, the category $\Set$ of sets with the discrete enrichment (see Section~\ref{sec:discrete functor} on the discrete enrichment of $\Set$, discussed for general $\V$), as well as any $\Vcat$-category, by~\eqref{eq:V-metric=>order}. In particular, $\Vcat$ as such can be seen $\Twocat$-enriched (for more details, we refer to Section~\ref{sec:beh-cat}, where this situation, known as {\em change-of-base}, is discussed).

\item The $\boldsymbol{O}$-categories of Smyth and Plotkin~\cite{smyth-plotkin} are special cases of $\Pre$-enriched categories, in the sense that the hom-sets are not only preorders, but actual partial orders such that  every ascending $\omega$-sequence has a 1.u.b. and composition of morphisms is $\omega$-continuous.

\item \label{ex:Vcat as enriched over itself}
$\Vcat$ itself is a $\Vcat$-category (see~\cite[Section~1.6]{kelly:book} and also Example~\ref{exle:V-categories}\eqref{ex:V as V-category itself}).

\item\label{ex:metric-enriched}
Let $\V=([0,\infty],\geq_{\mathbb R},+,0)$ be the quantale of Example~\ref{exs:Vcats=metric_spaces}\eqref{ex:V=[0,infty]}. As explained there, $\V$-categories are generalised metric spaces, and $\V$-functors are non-expanding maps, while $\Vcat$-categories are known as locally metric categories, or metric-enriched categories -- an example of such being the (sub)category of $\Vcat$ consisting of complete and bounded-by-1 metric spaces $\mathsf{CMS}$, with embedding-projection pairs as arrows~\cite{america-rutten,wagner}. $\mathsf{CMS}$ provides an appropriate context for studying reflexive quantitative domain equations~\cite{america-rutten,flag-kopp:continuity-spaces,turi-rutten,wagner}. The $\Vcat$-category structure of $\mathsf{CMS}$ is inherited from $\Vcat$ itself. 

\end{enumerate} 
\end{exa}

\medskip\noindent
A {\em $\Vcat$-functor\/} $F:\XX\to\YY$ is given by:
 
\begin{enumerate}
\item 
The assignment $X\mapsto FX$ on objects.
\item
For each pair of objects $X,Y$ in $\XX$, a $\V$-functor $F_{X,Y}:\XX(X,Y)\to\YY(FX,FY)$,
whose action on objects $f:X\to Y$ is denoted by $Ff:FX\to FY$.
For the distances we have the inequality
$$
\XX(X,Y)(f,g)
\leq
\YY(FX,FY)(Ff,Fg)
$$
\end{enumerate}

\noindent Of course, the diagrams of $\V$-functors below, expressing the preservation of unit and composition, should commute:
$$
\xymatrix@R=17pt@C=10pt{
\XX(X,X)
\ar[0,2]^-{F_{X,X}}
&
&
\YY(FX,FX)
\\
&
\One
\ar[-1,-1]^{u_X}
\ar[-1,1]_{u_{FX}}
&
}
\quad
\xymatrix@R=17pt@C=17pt{
\XX(Y,Z)\tensor\XX(X,Y)
\ar[0,2]^-{F_{Y,Z}\tensor F_{X,Y}}
\ar[1,0]_{c_{X,Y,Z}}
&
&
\YY(FY,FZ)\tensor\YY(FX,FY)
\ar[1,0]^{c_{
FX,FY,FZ
}}
\\
\XX(X,Z)
\ar[0,2]_-{F_{X,Z}}
&
&
\YY(FX,FZ)
}
$$

\medskip\noindent Given $F,G:\XX\to\YY$, a {\em $\Vcat$-natural transformation} $\tau:F\to G$ is given by a collection of $\V$-functors $\tau_X:\One\to\YY(FX,GX)$, such that the diagram
$$
\xymatrix@C=37pt@R=20pt{
&
\One\tensor\XX(X,Y)
\ar[0,1]^-{\tau_{Y}\tensor F_{X,Y}}
&
\YY(FY,GY)\tensor \YY(FX,FY)
\ar[1,1]^{\phantom{MM}c_{FX,FY,GY}}
&
\\
\XX(X,Y)
\ar[-1,1]^{\cong}
\ar[1,1]_{\cong}
&
&
&
\YY(FX,GY)
\\
&
\XX(X,Y)\tensor \One
\ar[0,1]_-{G_{X,Y}\tensor\tau_{X}}
&
\YY(GX,GY)\tensor\YY(FX,GX)
\ar[-1,1]_{\phantom{MM}c_{FX,GX,GY}}
&
}
$$
of $\V$-functors commutes. We will abuse the notation and denote by $\tau_X:FX\to GX$ the 
image in $\YY(FX,GX)$ of the object $0$ from $\One$ under the $\V$-functor $\tau_X:\One\to\YY(FX,GX)$.
The above diagram (when read at the object-assignments of the
ambient $\V$-functors) then translates as the equality
\begin{equation}\label{eq:Vcat-nat}
Gf \cdot \tau_{X}=\tau_{Y} \cdot Ff
\end{equation}
of objects of the $\V$-category $\YY(FX,GY)$, for every object $f:X \to Y$. On hom-objects, the above diagram says 
nothing\footnote{ This is
well-known for $\Pre$-natural transformations: one only needs
to verify ordinary naturality.} 
(recall that
$\V_o$ is a poset, hence there are no parallel pairs of morphisms
in $\V_o$).

\begin{exa}\label{ex:V-cat-functor}
\hfill
\begin{enumerate}

\item \label{ex:Two-cat-functor}
We consider again first the case $\V=\Two$. A $\Twocat$-functor, that is, a $\Pre$-functor, is also known as a locally monotone functor. 
For example, the usual discrete functor $D:\Set\to\Pre$ is a $\Pre$-functor,  but the forgetful functor $V:\Pre_o\to\Set_o$ is not, because $V$ is not locally monotone (the mapping $V_{X,Y}:\Pre(X,Y)\to\Set(VX,VY)$ is not order-preserving).

\item The hom-contracting functors on $\mathsf{CMS}$, employed in the metric domain equations of~\cite{america-rutten}, are in fact $\Vcat$-enriched functors, for $\V=[0,\infty]$ (see Example~\ref{ex:Vcat-categories}\eqref{ex:metric-enriched}). 

\end{enumerate}
\end{exa}


\subsection{On the fully-faithfulness of the discrete functor $D:\Set \to \Vcat
$}\label{sec:discrete functor}

\ 

\bigskip\noindent
This section studies in detail the observation that the discrete functor $$D:\Set \to \Vcat$$ is fully faithful if and only if $\V$ is integral, that is, $e=\top$.%
\footnote{All our examples are integral with the exception of the last two items of Example~\ref{ex:quantale}.} 
Indeed, if $e<\top$ then 
$$\Set(\emptyset,\emptyset)(\Id,\Id) = e 
\quad\quad\textrm{and}\quad\quad 
\Vcat(\mathbb 0,\mathbb 0)(\Id,\Id) = \top$$
where $\mathbb 0 = D\emptyset$ denotes the empty $\V$-category.

\medskip\noindent
This contradicts the fully faithfulness of $D$, which is defined in $\Vcat$ via the existence of an isomorphism $\Set(A,B)\cong \Vcat(DA,DB)$ and implies
$$\Set(A,B)(f,g) = \Vcat(DA,DB)(Df,Dg).$$

\noindent
To make the argument above precise, we need to explain in what sense $\Set$ is a $\Vcat$-category and in what sense $D$ is a $\Vcat$-functor. The remainder of this section formalises the concept of discreteness within $\Vcat$ following~\cite{eilenberg-kelly, kelly:book} and may be skipped by a reader who accepts the argument as sketched above. 

\medskip\noindent
Let $\Set$, for now, denote the ordinary category of sets and functions. The ordinary forgetful functor $V:\Vcat_o\to\Set$, mapping each $\V$-category to its set of objects, has a left adjoint~\cite[Section~2.5]{kelly:book}, known as the free $\Vcat$-category functor (or, as we will call it, {\em the discrete functor})
$$D:\Set \to \Vcat_o$$

\medskip\noindent
The functor $D$ maps each set $X$ to the $\V$-category $X\cdot \mathbb 1$, the coproduct in $\Vcat$ of $X$-copies of the unit $\V$-category $\mathbb 1$ (recall that $\mathbb 1$ has one object $0$, with $\V$-self-distance $\mathbb 1(0,0)=e$). That means that each $\V$-category $DX$ is discrete, as in~\eqref{eq:discrete V-category}.  
In particular, $D\emptyset$ is the empty $\V$-category $\mathbb 0$.

\medskip\noindent 
By~\cite[Section~2.5]{kelly:book}, $D$ is strong monoidal: 
\[
D(X \times Y) \cong DX \otimes DY, \quad D1 \cong \One
\]
hence it induces a (2-)functor
$$D_\ast:\Set\hspace{.9pt}\mbox{-}\hspace{.9pt}{\mathsf{cat}} \to \Vcat\hspace{.9pt}\mbox{-}\hspace{.9pt}{\mathsf{cat}}$$
between ordinary categories (i.e., $\Set$-enriched) and $\Vcat$-enriched categories~\cite[Section~6]{eilenberg-kelly}. 
The functor $D_\ast$ maps an ordinary category $\mathcal C$ to the $\Vcat$-category with the same set of objects as $\mathcal C$, and $\Vcat$-homs $(D_\ast\mathcal C)(f,g)=D(\mathcal C(f,g))$. 
In particular, $\Set$ itself gets enriched to a $\Vcat$-category $D_\ast\Set$ with sets as objects, and for any sets $X,Y$, 
$$(D_\ast\Set)(X,Y)=D(\Set(X,Y))=\Set(X,Y)\cdot \mathbb 1$$
Notice that $(D_\ast\Set)(\emptyset,\emptyset) =\mathbb 1$ (as there is only one map $\emptyset\to\emptyset$, namely the identity)

\medskip\noindent 
Then $D$ can be perceived as a $\Vcat$-functor 
$$\widehat{D}:D_\ast \Set \to \Vcat$$
with same action as $D$ on objects (i.e. it maps a set $X$ to the $\V$-category $DX=X\cdot \mathbb 1$). 
Saying that $\widehat D$ is a $\Vcat$-functor means that for each sets $X,Y$, there is a $\V$-functor 
$$\widehat{D}_{X,Y}:(D_\ast\Set)(X,Y)\to \Vcat(DX,DY)$$
induced by the monoidal (closed) structure of $D$, and this collection of $\V$-functors is compatible with identity and composition.
Explicitly, $\widehat{D}_{X,Y}$ maps a function $f:X\to Y$ to the $\V$-functor $Df:DX \to DY$, the $\V$-functor structure of $\widehat{D}_{X,Y}$ being witnessed by the inequality
$$(D_\ast\Set)(X,Y)(f,g)\leq \Vcat(DX,DY)(Df,Dg)$$
In particular, there is a $\V$-functor
$$\widehat{D}_{\emptyset,\emptyset}:(D_\ast\Set)(\emptyset,\emptyset)=\mathbb 1 \to \Vcat(D\emptyset,D\emptyset)$$
Observe now that the only $\V$-functor $D\emptyset \to D\emptyset$ is the identity $\Id$, and that 
$$\Vcat(D\emptyset,D\emptyset)(\Id,\Id)=\top$$
hence $\Vcat(D\emptyset, D\emptyset)$ is (isomorphic to) the $\V$-category $\mathbb 1_\top$ with one object $0$ and $\mathbb 1_\top(0,0)=\top$. 
Therefore we have 
$$\widehat{D}_{\emptyset,\emptyset}:\mathbb 1 \to \mathbb 1_\top$$
This situation extends to 
\begin{equation}\label{eq:D not ff}
\widehat{D}_{\emptyset, Y}: (D_\ast\Set)(\emptyset, Y)= \Set(\emptyset, Y)\cdot \mathbb 1 = \mathbb 1 \to \Vcat(D\emptyset,DY)=\Vcat(\mathbb 0,DY)=\mathbb 1_\top
\end{equation}
for arbitrary set $Y$. 

\medskip\noindent
In view of the observation above, we may formulate the following result, vital for the development of the paper:

\begin{prop}\label{prop:D-ff} 
The $\Vcat$-functor $\widehat{D}:D_\ast\Set\to \Vcat$ is  fully-faithful if and only if the quantale $\V$ is integral.
\end{prop}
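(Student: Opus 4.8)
The plan is to unwind what full-faithfulness of the $\Vcat$-functor $\widehat{D}$ amounts to, namely that each comparison $\V$-functor $\widehat{D}_{X,Y}:(D_\ast\Set)(X,Y)\to\Vcat(DX,DY)$ is an isomorphism of $\V$-categories, and then to verify this hom by hom. First I would check the objects. The domain $(D_\ast\Set)(X,Y)=\Set(X,Y)\cdot\One$ is the discrete $\V$-category on the set of functions $X\to Y$, while the objects of $\Vcat(DX,DY)$ are the $\V$-functors $DX\to DY$. Since $DX$ and $DY$ are discrete, the $\V$-functor inequality $DX(x',x)\le DY(fx',fx)$ is automatic (it reads $e\le e$ when $x'=x$ and $\bot\le{-}$ otherwise), so these $\V$-functors are precisely the functions $X\to Y$. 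Hence $\widehat{D}_{X,Y}$, which sends $f$ to $Df$, is a bijection on objects, and the whole question reduces to whether it preserves distances.

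Next I would compare the two distances. On the domain, discreteness gives $(D_\ast\Set)(X,Y)(f,g)=e$ when $f=g$ and $\bot$ otherwise. On the codomain, the internal-hom formula \eqref{eq:Vcat internal hom} yields
$$\Vcat(DX,DY)(Df,Dg)=\bigwedge_{x\in X} DY(fx,gx),$$
where each term equals $e$ if $fx=gx$ and $\bot$ otherwise. For \emph{nonempty} $X$ these two expressions agree on the nose: if $f=g$ the meet is a nonempty meet of copies of $e$, hence $e$; if $f\neq g$ some term is $\bot$, so the meet is $\bot$. Thus for every nonempty $X$ the comparison $\widehat{D}_{X,Y}$ is an isomorphism, irrespective of integrality.

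The only point where the two sides can part company is $X=\emptyset$, and this is exactly the obstacle the proposition is about. Here $\Set(\emptyset,Y)$ is a singleton (the empty function), so the domain hom is $\One$ with self-distance $e$, whereas the codomain meet $\bigwedge_{x\in\emptyset}DY(fx,gx)$ is an \emph{empty} meet and therefore equals $\top$; this is precisely the $\V$-category $\mathbb 1_\top$ already isolated in \eqref{eq:D not ff}. So $\widehat{D}_{\emptyset,Y}$ is an isomorphism if and only if $e=\top$. Combining the two cases, every $\widehat{D}_{X,Y}$ is an isomorphism---that is, $\widehat{D}$ is fully faithful---if and only if $e=\top$, which is the definition of $\V$ being integral.

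The computation itself is routine; the one thing to get right is the bookkeeping of the empty meet at $X=\emptyset$, since that is the single place where the empty-meet convention $\top$ can disagree with the discrete self-distance $e$. Recognising that this lone discrepancy is what makes integrality the exact dividing line is the conceptual heart of the argument, and everything else is a direct unfolding of the definitions of $\widehat{D}_{X,Y}$ and of the $\V$-hom \eqref{eq:Vcat internal hom}.
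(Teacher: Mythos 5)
Your proof is correct and follows essentially the same route as the paper's: the paper likewise observes that $\Vcat(DX,DY)$ is discrete for non-empty $X$ (so $\widehat{D}_{X,Y}$ is an isomorphism there) and then reduces the question to the empty-domain case \eqref{eq:D not ff}, where the empty meet forces $\Vcat(\mathbb 0,DY)\cong\mathbb 1_\top$, making integrality ($e=\top$, i.e.\ $\mathbb 1\cong\mathbb 1_\top$) the exact criterion. Your write-up merely spells out in more detail the computations the paper leaves as "easy to see."
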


\begin{proof}
It is easy to see that for non-empty $X$, $\Vcat(DX,DY)$ is a discrete $\V$-category, hence 
\[
\widehat{D}_{X, Y}: (D_\ast\Set)(X, Y)= \Set(X, Y)\cdot \mathbb 1 \to \Vcat(DX,DY)
\]
is an isomorphism of $\V$-categories. The result now follows from~\eqref{eq:D not ff}, using that $e=\top$ iff $\mathbb 1 \cong \mathbb 1_\top$.
\end{proof}

\begin{rem}\label{ex:discrete V-cat-functor}
\hfill
\begin{enumerate}
\item Notice that not only $D$, but actually the whole adjunction $D\dashv V$ lifts to an adjunction 
\[
D_\ast \dashv V_\ast : \Vcat\hspace{.9pt}\mbox{-}\hspace{.9pt}{\mathsf{cat}} \to \Set\hspace{.9pt}\mbox{-}\hspace{.9pt}{\mathsf{cat}} 
\]
where $V_\ast$ maps a $\Vcat$-category to its underlying ordinary category. 
That means that ordinary functors, like $\Set\to \Set = V_\ast D_\ast \Set$ or $\Set\to \Vcat_o = V_\ast \Vcat$, and ordinary natural transformations between such, automatically get $\Vcat$ enriched to $D_\ast \Set \to D_\ast \Set$ or $D_\ast \Set \to \Vcat$, and so on.
\item 
It will be important in the sequel that although $D$ acquires an enrichment to the $\Vcat$-functor $\widehat D$, its ordinary right adjoint $V$ does not. Hence $\widehat D$ lacks a $\Vcat$-enriched right adjoint.
\end{enumerate}
\end{rem}

\paragraph{\bf Notation.} For simplicity, we write in the sequel $\Set$ for the $\Vcat$-enriched category $D_\ast\Set$ and $D:\Set\to\Vcat$ for the $\Vcat$-functor  $\widehat{D}:D_\ast \Set \to \Vcat$. When referring to $\Set$ or $D$ as an ordinary category/functor, we will denote them by $\Set_o$ and $D_o:\Set_o\to \Vcat_o$, respectively. 
This agrees with the convention set at the beginning of the paper, because $V_\ast D_\ast = \Id$, as the reader can easily check.


\subsection{On $\Vcat$-enriched colimits}\label{sec:colimit}

\

\bigskip\noindent
The last bit of standard notation from enriched category %
theory concerns colimits. We recall it for %
$\Vcat$-categories.

\begin{defi}\label{def:colimit}
A {\em colimit\/} of a $\Vcat$-functor $F:\AA\to\XX$, where $\AA$ is small, weighted by a $\Vcat$-functor $W:\AA^\op\to\Vcat$, consists of an object $\Colim{W}{F}$ of $\XX$, together with an isomorphism in $\Vcat$
\[
\XX(\Colim{W}{F},X)\cong [\AA^\op,\Vcat](W,\XX(F-,X))
\]
which is $\Vcat$-natural in $X$.%
\footnote{
Above, $[\AA^\op, \Vcat]$ is the usual $\Vcat$-category of presheaves over $\AA$.}%
\qed
\end{defi}

\medskip\noindent
In case $\AA$ is the unit $\Vcat$-category,%
\footnote{
Having only one object, with corresponding $\V$-category-hom $\One$.} %
we may identify $F$ with an object $X$ of $\XX$ and $W$ with a $\V$-category $\A$. %
The resulting weighted colimit, usually called the {\em copower} (or the {\em tensor}) of $\kat{A}$ with $X$, shall be denoted then $\kat{A}\Sum X$ instead of $\Colim{W}{F}$.

\medskip\noindent 
To emphasise the importance of \emph{weighted} colimits, recall that in ordinary category theory, in particular in algebraic theories, quotienting by a set of equality constraints produces a coequalizer (an ordinary colimit). Contrary, in the simplest $\Vcat$-enriched setting -- that is, $\Pre$-enriched, quotienting by a set of inequality constraints is an example of a weighted colimit called {\em coinserter}, which cannot be obtained using only ordinary colimits. %

\begin{exa}\label{exle:coinserter}
In the case $\V=\Two$, for which $\Twocat=\Pre$, a coinserter is a colimit that has as weight 
$\varphi:\AA^\op \to \Pre$, where $\mathbb A$ 
is 
\[
\xymatrix{\cdot \ar@<0.5ex>[r] \ar@<-0.5ex>[r] & \cdot}
\] 
and $W$ maps $\AA$
to the parallel pair
\[
\xymatrix{\mathbb 2 &\ar@<-0.5ex>[l]_1 \ar@<0.5ex>[l]^0 \mathbb 1}
\]
in $\Pre$, with arrow $0$ mapping to $0\in\mathbb 2$ and arrow $1$ mapping to $1\in\mathbb 2$ (recall that $\mathbb 2$ is the preorder  $\{0 < 1\}$). 
\end{exa}

\noindent
This example is of importance to us because the next section is based on the $\V$-generalisation of the observation that every preorder is a coinserter of discrete preorders, which was used in \cite{lmcs:bkv} to show that $\Set$ is dense in $\Pos$ (and also in $\Pre$).

\begin{exa}\label{exle:preord-coins-discrete}
Let $X$ be a poset (or a preordered set). Denote by 
$X_0=DVX$ the discrete preorder of the elements of $X$, and by $X_1$ the  discrete preorder 
of all comparable pairs, 
$X_1=\{ (x',x)\in X \mid x'\leq x\}=DV(X^\Two)$.\footnote{ $X^\Two=\Pre(\Two, X)$ is the ordered set of monotone maps $\Two \to X$.}
Let $d_0, d_1:X_1\to X_0$ be the two projections. Then the obvious  map $q:X_0\to X$, $q(x)=x$, exhibits $X$ as the coinserter
\begin{equation}
\label{eq:poset_coins}
\xymatrix@C=35pt{
X_1 
\ar@<0.5ex>[r]^{d_0} 
\ar@<-0.5ex>[r]_{d_1} 
& 
X_0 \ar[r]^-q & X}
\end{equation} 
in $\Pre$ of the diagram 
\begin{equation}
\xymatrix@C=35pt{
X_1 
\ar@<0.5ex>[r]^{d_0} 
\ar@<-0.5ex>[r]_{d_1}
&
X_0.
} 
\end{equation} 
of discrete posets $X_1,X_0$. \qed
\end{exa}

\medskip\noindent 
To follow the technical developments of the next section, it is worth understanding which part of Definition~\ref{def:colimit} forces the colimit $X$ in \eqref{eq:poset_coins} to be ordered. 
If we instantiate Definition~\ref{def:colimit} with $\XX=\Pre$ and $W$, $F$ as in Example~\ref{exle:coinserter}, then it is the monotonicity of the component $\Two\to\Pre(X_1,X)$ of the natural transformation in 
$$[\AA^\op,\Pre](W,\Pre(F-,X))$$
that gives $c \circ d_0\le c \circ d_1$.

\bigskip\noindent
Having introduced weighted colimits, we are now in the position to talk about Kan extensions: 

\begin{defi}[Kan extension]
\label{def:Lan}
Let $J:\mathbb A \to \mathbb B$, 
$H:\mathbb A \to \mathbb X$ be $\Vcat$-functors. 
A $\Vcat$-enriched \emph{left Kan extension of $H$ along $J$}, is a $\Vcat$-functor $\Lan{J}{H}:\mathbb B\to \mathbb X$, 
such that there is a $\Vcat$-natural isomorphism 
\begin{equation}\label{eq:lan}
(\Lan{J}{H}) B \cong \mathbb B(J-, B) * H
\end{equation}
for each $B$ in $\mathbb B$.
\end{defi}

\begin{rem}
\label{rem:lan}
\hfill
\begin{enumerate}
\item According to Definition~\ref{def:colimit}, Equation~\eqref{eq:lan} above simply says that there is an isomorphism 
\[
\mathbb X ((\Lan{J}{H})B,X) \cong [\mathbb A^\op,\Vcat](\mathbb B(J-, B), \mathbb X(H-, X))
\]
 in $\Vcat$, natural in $B$ and $X$.
\item 
\label{rem:weak_def_lan}
For any $\Vcat$-functor $H':\mathbb B\to \mathbb X$, there is a bijection between $\Vcat$-natural transformations $\Lan{J}{H}\to H'$ and $\Vcat$-natural transformations $H \to H' J$, 
in analogy to the case of ordinary left Kan extensions. 
In particular, there is a $\Vcat$-natural transformation 
$\alpha:H\to (\Lan{J}{H} ) J$, called the {\em unit\/} of the left Kan extension, which is {\em universal\/} in the sense that for a $\Vcat$-functor $H':\mathbb B \to \mathbb X$, any $\Vcat$-natural transformation $H\to H' J$ factorises through 
$\alpha$. If the codomain $\XX$ is $\Vcat$ itself, the above bijection can be taken as an alternative definition of the $\Vcat$-enriched left Kan extension (see the discussion after Theorem~4.43 in~\cite{kelly:book}).

\item \label{unit of Lan iso} If $J:\mathbb A \to \mathbb B$ is fully faithful, then the unit $\alpha:H \to ( \Lan{J}{H} )J$ of the left Kan extension is an isomorphism~\cite[Proposition~4.23]{kelly:book}. 

\item The $\Vcat$-enriched left Kan extension $\Lan{J}{H}$ exists whenever $\mathbb A$  is small and $\mathbb X$ is cocomplete~\cite[Section~4.1]{kelly:book}.
But it might exist even when $\mathbb A$ is not small, as we will see later in a special case (Theorem~\ref{thm:Lan}).
\qed
\end{enumerate}
\end{rem}


\section{Extending functors from $\Set$ to $\Vcat$}
\label{sec:extension} 

As explained in the introduction, we are interested in left Kan extensions
$$
\xymatrix@R=20pt{
\Vcat
\ar[0,2]^-{\Lan{D}{(DT)}}
&
&
\Vcat
\\
\Set
\ar[rr]_-{T}
\ar[-1,0]^-{D}
&
&
\Set\ar[u]_D
}
\quad\quad
\quad\quad
\xymatrix@R=20pt{
\Vcat
\ar[0,2]^-{\Lan{D}{H}}
&
&
\Vcat
\\
\Set
\ar[-1,2]_-{H}
\ar[-1,0]^-{D}
&
&
}
$$
where $\V=(\V_o,\tensor,e,[{-},{-}])$ is again a commutative quantale and $D:\Set \to \Vcat$ is the discrete functor mapping a set $X$ to the discrete $\V$-category $DX$, that is, the $\V$-category that has $X$ as set of objects, all self-distances $e$, and all other distances bottom, as in Section~\ref{sec:discrete functor}. 

\medskip\noindent
We call $\Lan{D}{(DT)}$ in the left-hand diagram the \emph{$\Vcat$-ification} of $T$.

\medskip\noindent
It is now important to note that we are interested in $\Lan{D}{(DT)}$ and, more generally, in $\Lan{D} H$, in the $\Vcat$-enriched sense and not in the ordinary sense. 
Given that the ordinary functor $D_o$ has the forgetful functor $V:\Vcat_o\to\Set_o$ as a right adjoint, the {\em ordinary} left Kan extension of $H_o$ along $D_o$, is $H_oV$.
We observe that $\Lan{D_o} H_o =H_oV$ is not interesting from a metric point of view as $H_oV(\X)$ does not depend on the metric of $\X$. 
But, crucially, $D$ does not have an \emph{enriched} right adjoint and, as we will see, the $\Vcat$-enriched left Kan extension $\Lan{D}H$ is rather more interesting than $\Lan{D_o}H_o=H_oV$.


\subsection{Preliminaries on extensions and liftings}

\ 

\bigskip\noindent
Here we will have a closer look at extensions along $D:\Set\to\Vcat$ and liftings along $V:\Vcat_o\to\Set_o$.  
Notice that the ordinary adjunction 
$D_o\dashv V:\Vcat_o\to\Set_o$ exhibits $\Set_o$ as a full coreflective subcategory of $\Vcat_o$. 

\begin{defi}\label{def:ext-lift}
Let $T:\Set\to \Set$, $\ol T:\Vcat \to \Vcat$ be $\Vcat$-functors. 
\begin{enumerate}
\item \label{def:ext}
We say that a $\Vcat$-natural isomorphism
$$
\xymatrix@R=20pt{
\Vcat
\ar[0,2]^-{\ol{T}}
&
&
\Vcat
\\
\Set
\ar[0,2]_-{T}
\ar[-1,0]^{D}
\ar@{}[-1,2]|{\nwarrow\alpha}
&
&
\Set
\ar[-1,0]_{D}
}
$$
of $\Vcat$-functors exhibits $\ol{T}$ as an {\em extension\/} of $T$. 

\noindent 
If a natural (iso)morphism $\alpha:DT\to \ol T D$ happens to be the unit of a left Kan extension, that is, if $\ol{T}\cong \Lan{D}{(D  T)}$ holds, then we say that $\alpha$ exhibits $\ol{T}$ as the {\em $\Vcat$-ification\/} of $T$, and we will denote it by $T_\V$.
\item \label{def:lift}
We say that a natural isomorphism
$$
\xymatrix@R=20pt{
\Vcat_o
\ar[0,2]^-{\ol{T}_o}
&
&
\Vcat_o
\\
\Set_o
\ar[0,2]_-{T_o}
\ar@{<-}[-1,0]^{V}
\ar@{}[-1,2]|{\nearrow\beta}
&
&
\Set_o
\ar@{<-}[-1,0]_{V}
}
$$
of ordinary functors exhibits $\ol{T}$ as a {\em lifting\/} of $T$. 
\end{enumerate}
\end{defi}

\begin{exas}\label{ex:extensions}
\hfill
\begin{enumerate}
\item
The identity $\Vcat$-functor $\Id:\Vcat\to\Vcat$ is an extension and a lifting of the identity functor on $\Set$. 
We will see later that $\Id:\Vcat\to\Vcat$ is universal among all extensions of $\Id:\Set\to\Set$, in that it is the $\Vcat$-ification  (for an arbitrary commutative quantale $\V$) of the identity functor on $\Set$.

\item \label{ex:unicity-of-extens}
If $D:\Set\to \Vcat$ has a left adjoint $C\dashv D$,%
\footnote{
In case $\V=\Two$, the left adjoint $C:\Pre\to\Set$ takes connected components. For the general situation see Remark~\ref{rem:change-of-base-cd}.
} %
then not only the identity functor $\Id$ on $\Vcat$ but also the composite $DC$ is an extension of the identity functor (but not a lifting!). 

\item 
The convex powerset functor $\overline{\mathcal P}:\Pos\to\Pos$ is an extension, in fact the  posetification \cite{lmcs:bkv}, of $\mathcal P:\Set\to\Set$, but is not a lifting. On the other hand, the extension of the powerset functor to $\Pre\to\Pre$ is also a lifting, as are all $\Vcat$-ifications (see ).

\item Every functor $T:\Set \to \Set$ admits the lifting $\ol T_\top:\Vcat\to \Vcat$ mapping a $\V$-category $\X$ to the $\V$-category with set of objects $TX_\extra$, where $X_\extra$ is the set of objects of $\X$,%
\footnote{%
See Proposition~\ref{prop:Vcoinserter}.} %
and $\ol T_\top\X(A',A)=\top$ for all objects $A', A $. In fact, $\ol T_\top$ is final among all liftings of $T$, in in the sense that for any other lifting $\ol T$ with $\beta:T_o V \to V\ol T_o$, there is a unique $\Vcat$-natural transformation $\gamma:\ol T \to \ol T_\top$ such that $\beta_\top = V\gamma_o \cdot \beta$, where $\gamma _o$ is the ordinary natural transformation associated to $\gamma$, and $\beta_\top:T_o V \to V{\ol T_\top}_o$ is the isomorphism corresponding to $\ol T_\top$.

\item
The $\Vcat$-ification $T_\V$ exists for every {\em accessible\/}
functor $T:\Set\to\Set$ for general reasons. 
More in detail, if $T$ is $\lambda$-accessible
for a regular cardinal, then $T=\Lan{J_\lambda}{(T J_\lambda)}$,
where $J_\lambda:\Set_\lambda\to\Set$ is the inclusion of the full
subcategory $\Set_\lambda$ spanned by $\lambda$-small sets. Consequently,  
$$
T_\V=\Lan{D J_\lambda}{(D  T J_\lambda)}
$$
exhibits $T_\V$ as $\Lan{D}{(D  T)}$ by~\cite[Theorem~4.47]{kelly:book}.
In particular, the $\Vcat$-ification $(T_\Sigma)_\V$ exists for every polynomial 
functor 
$$
T_\Sigma X=\coprod_n \Set(n,X)\Sum\Sigma n 
$$
where $\Sigma:|\Set_\lambda|\to\Set$ is a $\lambda$-ary signature.
We will give an explicit formula for the $\Vcat$-ification 
$(T_\Sigma)_\V$ later. 
\end{enumerate}   
\end{exas}


\subsection{The extension theorem}

\

\bigskip\noindent
Referring to the last item of the examples above, one of the aims of this paper is to show that $\Vcat$-ifications 
exist even if $T$ is not accessible. 
This is a consequence of a more general result, which we are going to prove as Theorem~\ref{thm:Lan}, namely that the left Kan-extension
$$
\xymatrix@R=20pt{
\Vcat
\ar[0,2]^-{\Lan{D}{H}}
&
&
\Vcat
\\
\Set
\ar[-1,2]_-{H}
\ar[-1,0]^-{D}
&
&
}
$$
exists for any $H:\Set\to\Vcat$.

\medskip\noindent
We will give an explicit construction of these left Kan-extensions, the idea of which is as follows. 
\begin{itemize}
\item
Every $\V$-category $\X$ can be represented as a certain small colimit of discrete $\V$-categories. 

\item
Since $H$ is defined on discrete $\V$-categories,  $H$ can be extended to $\X$ by  applying $H$ to the corresponding discrete categories and then computing the colimit.

\item Since the colimit is small, the extension of $H$ always exists, even though $\Set$ is not a small subcategory of $\Vcat$.
\end{itemize}

\medskip\noindent
Recall from Example~\ref{exle:coinserter} the definition of a coinserter, and from Example~\ref{exle:preord-coins-discrete} how any $\Two$-category (preorder) appears as the coinserter  \eqref{eq:poset_coins} of discrete $\Two$-categories.
Before adapting the notion of coinserter to enrichment over $\V$, we first explain why the obvious modification of the $\Twocat$-situation leads to a notion that is too strong for our purposes.

\begin{rem}
Given that we want to generalise the construction \eqref{eq:poset_coins} of a preorder as a coinserter
\begin{equation}\label{eq:canonical-coinserter}
\xymatrix@C=35pt{
DV(X^\Two)
\ar@<0.5ex>[r]^{d_0} 
\ar@<-0.5ex>[r]_{d_1} 
& 
DVX \ar[r]^-q & X}
\end{equation} 
it is tempting to ask what it could mean for 
\begin{equation}\label{eq:XtotheV}
\xymatrix@C=45pt{
DV(\X^\V)
\ar[r]^{(d^r)_{r\in\V_o} }
& 
DV\X \ar[r]^-q & \X}
\end{equation} 
 to be a colimit in $\Vcat$, where we denoted $\X^\V = \Vcat(\V,\X)$. In detail, let $\AA$ be the free $\Vcat$-category built on the ordinary category with two objects $\{1,0\}$ and with arrows
\[
\xymatrix{1 \ar[r]^{\delta^r} & 0}
\]
for all $r\in\V_o$. 
In analogy with Example~\ref{exle:coinserter}, we consider the weight $W:\AA^\op \to \Vcat$ mapping $0$ to $\One$ and $1$ to $\V$.%
\footnote{
Recall from Section~\ref{sec:V-cat-cat} that $\One$ is the unit $\V$-category, with only one object $0$ and with $\One(0,0)=e$.} %
On arrows, $W\delta^r$ sends the unique object of $\One$ to $r\in \V_o$
\[
\xymatrix{\One \ar[r]^{0\mapsto r} & \V}
\]
Furthermore, let $F:\AA \to \Vcat$ be the $\Vcat$-functor which maps the arrow $\delta^r:1\to 0$ to the ``evaluation at $r$'' $\V$-functor 
\[
\xymatrix{DV(\X^\V) \ar[r] & DV\X}
\]
that is, $F\delta^r(f)=f(r)$. 
Now spelling out Definition~\ref{def:colimit} of what it would mean for $\X$ to be the colimit $W * F$, we see that for each $\V$-category $\Y$, 
$$\Vcat(\X,\Y)\cong [\AA^\op,\Vcat](W,\Vcat(F-,\Y))$$
an object of the $\V$-category on the right hand side amounts to a map on objects $g:\X\to \Y$ satisfying 
\begin{equation}\label{eq:rsle}
[s,r]\le \Y(g(f(s)),g(f(r)))
\end{equation} 
for all $\V$-functors $f:\V \to \X$. For $\X$ to be a colimit, one needs the implication 
\[
r\le \X(x',x) \ \Rightarrow r\le \Y(g(x'),g(x))
\]
to hold for all $r\in\V$, which follows from \eqref{eq:rsle}, if one can find $f:\V\to\X$ such that $f(e)=x'$ and $f(r)=x$. But note that, in the case of  $\V=[0,\infty]$ and $\X=\{x',x\}$ with $0<_\mathbb R\X(x',x)<_\mathbb R\infty$ no such $f$ exists. Indeed, it would be interesting to define $\X$ to be ``path-connected'', or $\V$-connected, if for all $r\le \X(x',x)$ there is a $\V$-functor $f:\V\to\X$ such that $f(e)=x', f(r)=x$. \qed
 
\end{rem}

\noindent Intuitively, what goes wrong with trying to set up \eqref{eq:XtotheV} as a colimit is that $\V$-functors $f:\V\to \X$ can be perceived as ``paths'' and that a general $\X$ need not be ``path-connected''. While this may be interesting to pursue in the future, the solution of our problem is to replace \eqref{eq:XtotheV} by
\begin{equation}\label{eq:Vnerve}
\xymatrix@C=45pt{
DX_r
\ar@<0.5ex>[r]^{\partial^r_0}
\ar@<-0.5ex>[r]_{\partial^r_1} 
& 
DV\X \ar[r]^-q & \X}
\end{equation} 
where the sets $X_r$ were introduced in \eqref{eq:Xr} as $X_r = \{(x',x) \mid r\le \X(x',x)\}$ and can equally be described as $V\X^{\Two_r}$ (see~\eqref{eq:Two_r} and Example~\ref{ex:non-Vcatif}) in total accordance now with~\eqref{eq:canonical-coinserter}. 

\medskip
\noindent
Below we give the full details and prove that every $\X$ is a colimit as in \eqref{eq:Vnerve}, from which the existence of left-Kan extensions along $D$ then follows.

\medskip
\noindent
We use the letter ``N''  in various fonts and notations 
to indicate the analogy with the nerve of an ordinary category.

\begin{defi}\label{def:Vcoinserter}
A $\V$-coinserter is a colimit that has the weight
$$N:\mathbb{N}^\op \to \Vcat,$$ where $\NN$ 
is the {\em free} $\Vcat$-category built upon the following ordinary category $\mathsf N$: the objects are all $r$ in $\V_o$, together with an extra symbol $\extra$, 
with arrows 
\[
\xymatrix{r \ar@<0.45ex>[r]^{\delta_1^r} \ar@<-0.45ex>[r]_{\delta_0^r} & \extra}
\] 
for $r$ ranging over the elements of $\V_o$. 
The weight $N$ is the $\Vcat$-functor sending $\extra$ to $\One$, and $r$ to $\Two_r$. 
The action of $N$ on arrows is defined as follows:
$N\delta^r_0:\One\to\Two_r$ sends $0$ to $0$, while $N\delta^r_1:\One\to\Two_r$ sends $0$ to $1$:
\[
\xymatrix{\mathbb 2_r &\ar@<-0.45ex>[l]_{N\delta^r_1} \ar@<0.45ex>[l]^{N\delta^r_0} \mathbb 1}
\]
\end{defi}

\begin{rem}
\label{rem:V-coins-exist}
As $\Vcat$ is complete and cocomplete as a $\Vcat$-category itself~\cite[Sections 3.2,~3.10]{kelly:book}, $\V$-coinserters do exist in $\Vcat$ 
and can be computed using copowers and conical colimits, the latter of which, on the level of objects, are computed as in $\Set_o$ because of the topologicity of the forgetful functor $\Vcat_o\to \Set_o$~\cite{hst:book}. But this general recipe is not so easy to use in practice, and we will provide in Corollary~\ref{cor:Lan by zigzag paths} and Lemma~\ref{lem:wpb} alternatives for computing the $N$-weighted colimits of interest to us.
\qed
\end{rem}

\noindent 
We are now going to show that every $\V$-category $\X$ is a $\V$-coinserter of discrete $\V$-categories which we would like to think of as components of ``its $\V$-nerve''.

\begin{prop}
\label{prop:Vcoinserter}
Let $\X$ be a $\V$-category and let $F\!_\X:\NN\to\Set$ be given by
\begin{align*}
\mathsf N & \to\Set_o
\\
\extra 
& 
\ \mapsto \ X_\extra 
\mbox{ , the set of objects of $\X$} \\ 
r & \ \mapsto \ X_r =\{(x',x)\in X_\extra \times X_\extra  \mid r\leq \X(x',x)\}
\end{align*}
with $F\!_\X\delta^r_0$ and $F\!_\X\delta^r_1$ the evident projections.
Then the colimit $$\Colim{N}{(D F_\X)}$$ of the diagram
\begin{equation*}\label{eq:DFX}
(N:\mathbb{N}^\op \to \Vcat,DF_\X:\NN\to\Vcat)
\end{equation*}
 in $\Vcat$ is isomorphic to $\X$.
\end{prop}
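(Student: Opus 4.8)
The plan is to verify the defining universal property of the weighted colimit directly. By Definition~\ref{def:colimit} it suffices to produce an isomorphism in $\Vcat$
$$
\Vcat(\X,\Y) \;\cong\; [\NN^\op,\Vcat]\bigl(N,\Vcat(DF_\X-,\Y)\bigr)
$$
that is $\Vcat$-natural in $\Y$; for then $\X$ represents the same $\Vcat$-presheaf as $\Colim{N}{(DF_\X)}$, whence $\X\cong\Colim{N}{(DF_\X)}$. So the whole argument reduces to analysing the right-hand side, first on objects and then on $\V$-distances.

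First I would unpack the objects of the right-hand $\V$-category, namely the $\Vcat$-natural transformations $\theta:N\to\Vcat(DF_\X-,\Y)$. The component at $\extra$ is a $\V$-functor $\theta_\extra:\One\to\Vcat(DX_\extra,\Y)$, which, since $\One$ is the unit and $DX_\extra$ is discrete, is nothing but a function $g:X_\extra\to Y_\extra$. The component at $r$ is a $\V$-functor $\theta_r:\Two_r\to\Vcat(DX_r,\Y)$; as $DX_r$ is discrete it amounts to a pair of functions $X_r\to Y_\extra$ assigned to the objects $0,1$ of $\Two_r$. Naturality of $\theta$ against the generating arrows $\delta^r_0,\delta^r_1$ forces these two functions to be $g$ precomposed with the two projections $F_\X\delta^r_0,F_\X\delta^r_1:X_r\to X_\extra$. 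The remaining content is that $\theta_r$ be a genuine $\V$-functor out of $\Two_r$: since $\Two_r(0,1)=r$, this is exactly the inequality
$$
r\le\bigwedge_{(x',x)\in X_r}\Y(g(x'),g(x)),
$$
i.e.\ $r\le\Y(g(x'),g(x))$ whenever $r\le\X(x',x)$. Taking $r=\X(x',x)$ shows this to be equivalent to $\X(x',x)\le\Y(g(x'),g(x))$ for all $x',x$, which is precisely the condition that $g$ be a $\V$-functor $\X\to\Y$. Conversely any $\V$-functor $g:\X\to\Y$ visibly assembles into such a $\theta$, so objects correspond bijectively to objects of $\Vcat(\X,\Y)$.

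Next I would check that this correspondence preserves $\V$-distances, making it an isomorphism of $\V$-categories. The hom-object of the presheaf $\Vcat$-category is the end $\bigwedge_{a\in\NN}[\,N(a),\Vcat(DF_\X(a),\Y)\,](\theta_a,\theta'_a)$. The component at $\extra$ contributes $\bigwedge_{x}\Y(g(x),g'(x))$, which is exactly $\Vcat(\X,\Y)(g,g')$; each component at $r$ contributes a meet of distances $\Y(g(x'),g'(x'))$ and $\Y(g(x),g'(x))$ ranging only over pairs in $X_r$, and so is $\ge$ the $\extra$-component. Hence the end collapses to the $\extra$-component and the two distances agree. Finally, the isomorphism is manifestly $\Vcat$-natural in $\Y$, since both sides are reindexed by postcomposition along $\V$-functors $\Y\to\Y'$.

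I expect the one genuinely delicate point to be the distance computation of the previous paragraph: one must be careful that the end over $\NN$ is governed by the $\extra$-component and that no $r$-component strictly lowers the meet. The object-level correspondence and the $\Vcat$-naturality in $\Y$ are then routine unravelling of the definitions of $\Two_r$, of discreteness of $DX_r$ and $DX_\extra$, and of the weight $N$; note in particular that nothing here uses integrality of $\V$, so the statement holds for every commutative quantale.
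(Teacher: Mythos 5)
Your proof is correct and follows essentially the same route as the paper's: both unpack the defining isomorphism of the weighted colimit, identify $N$-weighted cocones under $DF_\X$ with $\V$-functors out of $\X$ (via discreteness of $DX_\extra$ and $DX_r$, the $\V$-functoriality constraint out of $\Two_r$, and naturality along $\delta^r_0,\delta^r_1$), and then check that the distances collapse to the $\extra$-component. The only difference is presentational: you phrase the conclusion as representability of the cocone presheaf, whereas the paper exhibits the identity-on-objects cocone $q_\X$ as couniversal, which is the same verification in substance.
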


\begin{proof}
The colimit $\Colim{N}{(DF\!_\X)}$ exists in $\Vcat$, since
the $\Vcat$-category $\NN$ is small.

\medskip\noindent 
To ease the notation, we put 
\begin{gather*}
\X_\extra=D F_\X(\extra )
\quad\quad 
\quad\quad 
\X_r=DF_\X (r)\\
\ \ 
\partial^r_0=DF_\X\delta^r_0
\quad\quad 
\quad\quad \ 
\partial^r_1=D  F_\X\delta^r_1
\end{gather*}

\noindent 
Let us analyse the defining isomorphism
$$
\Vcat(\Colim{N}{(DF_\X)},\Y)
\cong
[\NN^\op,\Vcat](N,\Vcat(DF_\X{-},\Y))
$$
of $\V$-categories, natural in $\Y$.
The $\V$-category 
$$[\NN^\op,\Vcat](N,\Vcat(DF_\X{-},\Y))$$
of $N$-weighted ``cocones'' for $DF_\X$ is described explicitly as follows.
\begin{enumerate}
\item 
The objects are $\Vcat$-natural transformations
$$\tau:N\to \Vcat(DF_\X{-},\Y).$$ 
Each such $\tau$ consists of the following $\V$-functors:

\begin{itemize}
\item $\tau_\extra: N\extra\to\Vcat(\X_\extra,\Y)$. 

Since $N\extra=\One$, $\tau_\extra$
picks up a $\V$-functor $$f_\extra:\X_\extra\to \Y.$$
No other restrictions are imposed since $\One(0,0)=e$.
\item
$\tau_r:Nr\to\Vcat(\X_r,\Y)$. 

This $\V$-functor picks up
two $\V$-functors 
$$f^r_0:\X_r\to \Y$$ and 
$$f^r_1:\X_r\to \Y.$$
Since $\X_r$ is discrete, both $f^r_0$ and $f^r_1$ are
defined by their object-assignments only. There is, however,
the constraint below, because $Nr=\Two_r$:
$$
r\leq\bigwedge_{r \leq \X(x',x)} \Y(f^r_0 (x',x),f^r_1 (x',x))
$$ 
\end{itemize}
In addition to the above, there are various commutativity conditions since $\tau$
is natural. Explicitly, for $\delta^r_0:r\to \extra$, we have the commutative square
$$
\xymatrix@R=17pt{
N\extra =\One
\ar[0,2]^-{\tau_\extra}
\ar[1,0]_{N\delta^r_0}
&
&
\Vcat(\X_\extra,\Y)
\ar[1,0]^{\Vcat(\partial^r_0,\Y)}
\\
Nr=\Two_r
\ar[0,2]_-{\tau_r}
&
&
\Vcat(\X_r,\Y)
}
$$
that, on the level of objects, is the requirement 
$$f_\extra \cdot \partial^r_0 = f^r_0 $$
Similarly, for $\delta^r_1:r \to \extra$, we obtain 
$$f_\extra \cdot \partial^r_1 = f^r_1$$
We conclude that to give $\tau:N\to \Vcat(DF_\X{-},\Y)$ reduces to a $\V$-functor 
$$f_\extra:\X_\extra\to\Y$$
(and, recall, this $\V$-functor is given just by the
object-assignment $x\mapsto f_\extra x$, since $\X_\extra$ is discrete) such that
$r\leq \Y(f_\extra x',f_\extra x)$
holds for every object $(x',x)$ in 
$\X_r$ and every $r$,
which means precisely that 
$$\X(x',x)\leq \Y(f_\extra x',f_\extra x)$$ holds.

\medskip

\item
Given $\Vcat$-natural transformations $\tau$ and $\tau'$, then
the $\V$-distance between them is given by
$$
[\NN^\op,\Vcat](N,\Vcat(DF_\X{-},\Y))(\tau,\tau')
=
\bigwedge_{x} \Y(f_\extra x,f'_\extra x)
$$
where $f_\extra$ corresponds to $\tau$ and $f'_\extra$ corresponds to $\tau'$.
\end{enumerate}

\medskip\noindent
 From the above description of the $\V$-category of $N$-weighted ``cocones'' for $DF_\X$, it follows that the $\V$-functor 
\begin{equation}\label{eq:couniversal cocone}
q_\X:\X_\extra\to \X
\end{equation}
that sends each object $x$ to itself is the couniversal such ``cocone''. 
More precisely, the inequality $r\leq \X(q_\X x',q_\X x)$ holds for every $(x',x)$ in $\X_r$ and every $r$.

\medskip\noindent Furthermore, given any $\V$-functor $f_\extra:\X_\extra\to \Y$ with the above
properties, there is a unique $\V$-functor 
$f_\extra^\sharp:\X\to \Y$ such that $f_\extra^\sharp q_\X=f_\extra$ holds.

\medskip\noindent The ``2-dimensional aspect'' of the colimit, that is,
$$
\bigwedge_x \Y(f^\sharp_\extra x,f'^\sharp_\extra x)
=
\bigwedge_x \Y(f_\extra x,f'_\extra x).
$$
is satisfied because $f^\sharp_\extra$ and $f_\extra$ coincide on objects. 
Hence we have proved that $\X$ is isomorphic to $\Colim{N}{(DF_\X)}$.
\end{proof}

\begin{rem}
\hfill
\begin{enumerate}

\item Assume that the quantale $\V$ is integral. Then  following the same steps as in the proof above, one can see that the colimit $N\ast F_{DX}$ exists in $\Set$ and equals $X$. 
Intuitively, what is happening is that if $e=\top$, the diagram is non-empty everywhere and $D$ is fully faithful on non-empty domains, so it reflects colimits in general and the colimit $DX=N\ast DF_{DX}$ in particular.
\item The ordinary category $\mathsf N$ carries information about the elements of $\V_o$, but not about its order. This may seem peculiar at first sight, but the order is actually captured within the observation that the identity-on-objects $\Id_{r,s}:\Two_r =N(r) \to \Two_s = N(s)$ is a $\V$-functor iff $r\leq s$, and it is the only $\V$-functor such that $\Id_{r,s} \circ N\delta^r_i = N\delta^s_i$, $i=0,1$. 
Hence nothing is gained if additionally $\mathbb N$ is enhanced with arrows witnessing the order relation in $\V_o$, compatible with the existing arrows $r\rightrightarrows \extra$ and with (the transitivity of) the order relation in $\V_o$.
\end{enumerate}
\end{rem}

\medskip\noindent The lemma above allows us to compute left Kan extensions along $D:\Set\to\Vcat$:

\begin{thm}
\label{thm:Lan}
Every functor $H:\Set\to\Vcat$ 
has a $\Vcat$-enriched left Kan extension $H^\sharp:\Vcat\to\Vcat$
along $D:\Set\to\Vcat$ given by $H^\sharp\X = \Colim{N}{(HF\!_\X)}$.
\end{thm}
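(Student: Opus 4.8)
The plan is to verify directly the defining isomorphism of the $\Vcat$-enriched left Kan extension. By Definition~\ref{def:Lan} and its reformulation in Remark~\ref{rem:lan}, it suffices, for each fixed $\X$, to exhibit a $\V$-category representing the assignment
\[
\Y \ \mapsto \ [\Set^\op,\Vcat]\big(\Vcat(D-,\X),\Vcat(H-,\Y)\big)
\]
$\Vcat$-naturally in $\Y$; such a representing object is then by definition $(\Lan{D}{H})\X$, and exhibiting it simultaneously shows that the a~priori \emph{large} colimit $\Vcat(D-,\X)*H$ exists. The colimit $H^\sharp\X=\Colim{N}{(HF_\X)}$ exists because $\NN$ is small and $\Vcat$ is cocomplete, so I would show it is this representing object by constructing a $\Vcat$-natural isomorphism
\[
\Vcat(\Colim{N}{(HF_\X)},\Y)\ \cong\ [\Set^\op,\Vcat]\big(\Vcat(D-,\X),\Vcat(H-,\Y)\big).
\]
Functoriality of $H^\sharp$ in $\X$ and its $\Vcat$-enrichment then follow formally from representability, and the resulting functor is, by construction, $\Lan{D}{H}$.

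For the left-hand side I would unfold Definition~\ref{def:colimit} exactly as in the proof of Proposition~\ref{prop:Vcoinserter}, but with $H$ in place of $D$: an object of $\Vcat(\Colim{N}{(HF_\X)},\Y)$ is an $N$-weighted cocone, which reduces to a single $\V$-functor $g:HX_\extra\to\Y$ subject, for every $r\in\V_o$, to the constraint
\[
r\ \leq\ \bigwedge_{w\in HX_r}\Y\big(g\cdot H(F_\X\delta^r_0)(w),\ g\cdot H(F_\X\delta^r_1)(w)\big)
\]
imposed by the hom $\Two_r$ of the weight, the $\V$-distance of two such cocones $g,g'$ being $\bigwedge_{w\in HX_\extra}\Y(gw,g'w)$. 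For the right-hand side I would invoke density, i.e.\ Proposition~\ref{prop:Vcoinserter}: a $\Vcat$-natural transformation $\theta:\Vcat(D-,\X)\to\Vcat(H-,\Y)$ is pinned down by its value $g:=\theta_{X_\extra}(q_\X)$ on the couniversal cocone $q_\X:DX_\extra\to\X$, since every object $u$ of $\Vcat(DA,\X)$ is merely a function $u':A\to X_\extra$ and so factors as $q_\X\cdot Du'$, whence naturality forces $\theta_A(u)=g\cdot Hu'$. Here I would also observe that, because the hom-objects of $\Set=D_\ast\Set$ are discrete, $\Vcat$-naturality of $\theta$ collapses to ordinary naturality, just as recorded for $\Pre$-natural transformations after~\eqref{eq:Vcat-nat}.

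It then remains to match the two descriptions, and this is the step I expect to be the crux. On the right, the requirement that each $\theta_A$ be a $\V$-functor reads, via~\eqref{eq:Vcat internal hom},
\[
\bigwedge_{a\in A}\X(u'a,v'a)\ \leq\ \bigwedge_{w\in HA}\Y\big(g(Hu')w,\,g(Hv')w\big)
\]
for all parallel functions $u',v':A\to X_\extra$; on the left we have the $\V_o$-indexed family of cocone constraints displayed above. I would prove these equivalent in both directions: instantiating $A=X_r$ with $u',v'$ the two projections $X_r\rightrightarrows X_\extra$ yields the $r$-indexed constraint, since the left-hand meet is then $\geq r$ by definition of $X_r$; conversely, given the cocone constraints, for arbitrary $A,u',v'$ I would set $s=\bigwedge_{a}\X(u'a,v'a)$, note that $a\mapsto(u'a,v'a)$ lands in $X_s$ and factors $u',v'$ through the projections $X_s\rightrightarrows X_\extra$, and then bound the right-hand meet below by the $s$-indexed constraint. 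The same bookkeeping shows the $\V$-distances agree (both equal $\bigwedge_{w\in HX_\extra}\Y(gw,g'w)$, attained at $A=X_\extra$, $u=q_\X$). The genuine obstacle throughout is the right-hand analysis: showing that a transformation out of the non-representable weight $\Vcat(D-,\X)$ is determined by one value and that its enriched naturality is precisely the discrete $\V_o$-indexed system of cocone inequalities. This is exactly where the $\V$-nerve presentation of Proposition~\ref{prop:Vcoinserter} does the essential work.
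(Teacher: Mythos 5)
Your proposal is correct, but it proves the theorem by a genuinely different route than the paper. You verify the pointwise colimit formula of Definition~\ref{def:Lan} directly, exhibiting $\Colim{N}{(HF_\X)}$ as a representing object for $\Y\mapsto[\Set^\op,\Vcat]\big(\Vcat(D-,\X),\Vcat(H-,\Y)\big)$: both sides reduce to a $\V$-functor $g\colon HX_\extra\to\Y$ subject to a system of inequalities, and your two-way translation between the $r$-indexed cocone constraints and the constraints indexed by pairs of functions $u',v'\colon A\to X_\extra$ (instantiating at the projections $X_r\rightrightarrows X_\extra$ in one direction, factoring through $X_s$ with $s=\bigwedge_a\X(u'a,v'a)$ in the other) is sound, as is the matching of hom-distances. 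The paper proceeds differently: it first makes $H^\sharp$ into a $\Vcat$-functor by hand (the enrichment inequality $\Vcat(\X,\Y)(f,g)\le\Vcat(H^\sharp\X,H^\sharp\Y)(H^\sharp f,H^\sharp g)$ is proved via the factorisation $t(x)=(f(x),g(x))$ through $Y_r$, a close cousin of your factorisation through $X_s$), and then verifies the \emph{weak} universal property --- the bijection between $\Vcat$-natural transformations $H^\sharp\to K$ and $H\to KD$ for arbitrary $\Vcat$-endofunctors $K$, with explicit unit $\alpha_X=c_{DX}$ --- relying on Remark~\ref{rem:lan}\eqref{rem:weak_def_lan} that this characterises left Kan extensions into $\Vcat$. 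Your route stays closer to the stated definition (no appeal to the weak-versus-pointwise equivalence), obtains functoriality and enrichment of $H^\sharp$ formally from representability, and in effect builds in a proof of density of $D$ (Corollary~\ref{cor:dense}), since your analysis of transformations out of the weight $\Vcat(D-,\X)$ is exactly a density argument; the cost is that you must make sense of this large weight and of hom-objects of $[\Set^\op,\Vcat]$ over the non-small $\Set$ (you flag this; it is harmless here because $\V_o$ is a complete lattice and your reduction shows the relevant transformations form a set), and you lean on the parametrised-representability machinery of~\cite{kelly:book}. The paper's route avoids the large functor category entirely, and its explicit unit and morphism-level construction are reused later (Corollary~\ref{cor:Lan by zigzag paths}, Theorem~\ref{thm:unitiso}), at the price of hand-checked functoriality and the appeal to the alternative characterisation.
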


\begin{proof}
Based on Proposition~\ref{prop:Vcoinserter}, we will prove the following.
If we define $H^\sharp \X$ as the colimit 
$\Colim{N}{(H F\!_\X)}$, then 
the assignment $\X\mapsto H^\sharp\X$ can be extended to a $\Vcat$-functor
that is a left Kan extension of $H$ along $D$. 


\medskip\noindent
Suppose $H:\Set\to\Vcat$ is given. We will use the notation employed in the proof of Proposition~\ref{prop:Vcoinserter}.
\medskip
\begin{enumerate}
\item \label{item 2.a}
We first define a $\Vcat$-functor $H^\sharp:\Vcat\to\Vcat$.

\medskip\noindent
For every small $\V$-category $\X$, let $F_\X$ be as in Proposition~\ref{prop:Vcoinserter}. Recall that for every $r\in \V_o$, we put $F_\X(r)=X_r$, the {\em set} of pairs $(x',x)$
such that $r\leq\X(x',x)$, and $F_\X(\extra)=X_\extra$, the {\em set} of objects of $\X$.
Analogously, for a $\V$-functor $f:\X\to\Y$, we now denote by 
$f_r:X_r\to Y_r$ and $f_\extra:X_\extra\to Y_\extra$ the {\em maps} corresponding to $(x',x)\mapsto (fx',fx)$ and the object assignment of $f$, respectively.
Let also put $d^r_0=F_\X\delta^r_0$ and $d^r_1=F_\X \delta^r_1$. 
 
\medskip\noindent
We define 
\begin{equation}\label{eq:H(Vnerve)}
H^\sharp\X=\Colim{N}{(H F_\X)},
\end{equation}
where $N$ is as in Definition~\ref{def:Vcoinserter}. 

\medskip\noindent
Unravelling the definition of the weighted colimit, the $1$-dimensional aspect says that to give a $\V$-functor $f^\sharp : H^\sharp \X \to \Y$ is the same as to give a $\V$-functor $f:HX_\extra \to \Y$ such that 
\begin{equation}
\label{eq:couniversal-property}
r \leq \bigwedge_{C\in HX_r} \Y (f Hd^r_0 (C), f Hd^r_1(C))
\end{equation}
holds for all $r$.%
\footnote{
By slight abuse of language, we will use here and subsequently notation like $C\in HX_r$ to mean that $C$ runs through all objects in \emph{the $\V$-category} $HX_r$. 
} 
In particular, there is a ``quotient'' $\V$-functor $c_\X :HX_\extra \to H^\sharp \X$ such that
\begin{equation}
\label{eq:Hsharp-quotient}
r \leq \bigwedge_{C \in HX_r} H^\sharp \X (c_\X Hd^r_0 (C), c_\X Hd^r_1(C))\end{equation}
holds for all $r$, with the property that any $\V$-functor $HX_\extra \to \Y$ satisfying~\eqref{eq:couniversal-property} uniquely factorizes through $c_\X$. 

\medskip\noindent
The 2-dimensional aspect of the colimit says that given any $f,g:HX_\extra \to \Y$ satisfying~\eqref{eq:couniversal-property}, the relation 
\begin{equation}
\label{eq:2-dim-aspect}
\bigwedge_{B \in HX_\extra} \Y( f (B),g (B)) 
=
\bigwedge_{A \in H^\sharp \X} \Y( f^\sharp (A),g^\sharp (A)) 
\end{equation}
holds. 

\medskip\noindent
For a $\V$-functor $f:\X\to\Y$ we recall that the diagram
$$
\xymatrix{
X_r
\ar@<.5ex> [0,1]^-{d_1^r}
\ar@<-.5ex> [0,1]_-{d_0^r}
\ar[1,0]_{f_r}
&
X_\extra
\ar[1,0]^-{f_\extra}
\\
Y_r
\ar@<.5ex> [0,1]^-{d_1^r}
\ar@<-.5ex> [0,1]_-{d_0^r}
&
Y_\extra
}
$$
commutes serially. Hence $f$ induces a $\Vcat$-natural transformation
$F_f:F_\X\to F_\Y$. Therefore we can define 
$H^\sharp f:H^\sharp\X\to H^\sharp\Y$
as the unique mediating $\V$-functor
$$
\Colim{N}{(H F_f)}
:
\Colim{N}{(H F_\X)}
\to
\Colim{N}{(H F_\Y)}
$$
In particular, we have the commutative diagram below:
\begin{equation*}
\vcenter{\xymatrix{
HX_\extra 
\ar[r]^{c_\X} 
\ar[d]_{Hf_\extra}
& H^\sharp \X 
\ar[d]^{H^\sharp f}
\\
HY_\extra
\ar[r]^{c_\Y}
&
H^\sharp \Y
}}
\end{equation*}
Also, from the 2-dimensional aspect of the colimit (see Eq.~\eqref{eq:2-dim-aspect}), we have that for any $f,g:\X \to \Y$, the equality below holds: 
\begin{equation}
\label{eq:Hsharp-2-dim-aspect}
\bigwedge_{B \in HX_\extra} H^\sharp \Y( c_\Y Hf_\extra(B), c_\Y Hg_\extra (B)) 
=
\bigwedge_{A \in H^\sharp \X} H^\sharp \Y( H^\sharp f (A), H^\sharp g (A)) 
\end{equation}
It remains to prove that the inequality
$$
\Vcat(\X,\Y)(f,g)
\leq
\Vcat(H^\sharp \X,H^\sharp \Y)(H^\sharp f,H^\sharp g)
$$
is satisfied. 
To that end, suppose that $r\leq \Vcat(\X,\Y)(f,g)$
holds. This is equivalent to the fact that there is a 
mapping
$t:X_\extra\to Y_r$ such that the triangles
\begin{equation}
\label{eq:internal_nat1}
\vcenter{
\xymatrix{
X_\extra
\ar[0,1]^-{t}
\ar[1,1]_{f_\extra}
&
Y_r
\ar[1,0]^{d_0^r}
&
&
X_\extra
\ar[0,1]^-{t}
\ar[1,1]_{g_\extra}
&
Y_r
\ar[1,0]^{d_1^r}
\\
&
Y_\extra
&
&
&
Y_\extra
}
}
\end{equation}
commute. In fact, $t(x)=(f(x),g(x))$. 
To prove that 
$
r
\leq 
\Vcat(H^\sharp \X,H^\sharp \Y)(H^\sharp f,H^\sharp g)
$
holds, we need to prove the inequality
$$
r \leq \bigwedge_{A\in H^\sharp \X} H^\sharp \Y ( H^\sharp f(A), H^\sharp g(A))
$$
This follows from:
\allowdisplaybreaks
\begin{align*}
r 
\leq
& 
\bigwedge_{C \in HY_r} H^\sharp \Y (c_\Y Hd^r_0(C), c_\Y Hd^r_1 (C))
&
\mbox{by~\eqref{eq:Hsharp-quotient}}
\\
\leq
&
\bigwedge_{B \in HX_\extra} H^\sharp \Y (c_\Y Hd^r_0 Ht(B), c_\Y Hd^r_1 Ht (B)) 
\\
= 
&
\bigwedge_{B \in HX_\extra} H^\sharp \Y (c_\Y Hf_\extra (B), c_\Y Hg_\extra (B)) 
&
\mbox{by~\eqref{eq:internal_nat1}}
\\
=
&
\bigwedge_{A \in H^\sharp \X} H^\sharp \Y (H^\sharp f (A), H^\sharp g (A)) 
&
\mbox{by~\eqref{eq:Hsharp-2-dim-aspect}}
\end{align*}
Preservation of composition and identity follows easily from the colimit property, hence we obtain that the correspondence $\X\mapsto H^\sharp\X$ can be extended to a $\Vcat$-functor $H^\sharp:\Vcat\to\Vcat$.

\medskip

\item
We show now that $H^\sharp\cong\Lan{D}{H}$ holds. As explained in~\cite[Section~4.3]{kelly:book} (see also Remark~\ref{rem:lan}\eqref{rem:weak_def_lan}), it is enough to check that for any $\Vcat$-functor $K:\Vcat\to\Vcat$, there is a one-to-one correspondence between $\Vcat$-natural transformations $H^\sharp\to K$ and $H\to KD$. 

\medskip\noindent 
Due to the definition of $H^\sharp$, there is a $\Vcat$-natural transformation $\alpha:H\to H^\sharp D$ having as components the $\V$-functors $\alpha_X = c_{DX}$. 
We will prove that $\alpha$ is the unit of a left Kan extension.

\medskip\noindent
Let $\tau:H^\sharp\to K$ be a $\Vcat$-natural transformation.  
Then the composite
$$
\xymatrix{
H
\ar[0,1]^-{\alpha}
&
H^\sharp D
\ar[0,1]^-{\tau D}
&
K D
}
$$
is a $\Vcat$-natural transformation $\tau^\flat:H\to K D$.

\medskip\noindent 
Conversely, let $\sigma: H\to K D$ be a $\Vcat$-natural transformation.
To give a $\Vcat$-natural transformation $\sigma^\sharp:H^\sharp \to K$ means to give a family of $\V$-functors 
$$\sigma^\sharp:H^\sharp \X \to \K\X$$ 
satisfying the naturality condition~\eqref{eq:Vcat-nat}. 
In turn, each $\V$-functor $\sigma^\sharp:H^\sharp \X \to \K\X$ is uniquely determined by a $\V$-functor $HX_o\to K\X$ such that~\eqref{eq:couniversal-property} holds. 

\noindent 
For this, we choose the composite $\V$-functor 
$$HX_o\overset{\sigma_{X_o}}{\to} KDX_o\overset{Kq_{\X}}{\to} K\X$$
where $q_\X:DX_o \to \X$ is the canonical cocone~\eqref{eq:couniversal cocone} associated to each $\V$-category $\X$.

\noindent We verify~\eqref{eq:couniversal-property} for each $r\in \V_o$: 
\begin{align*} 
&&
r
&&
\leq 
&&&
\ \  \bigwedge_{(x',x)\in X_r} \X(x',x)
\\
&&
&&
= 
&&&
\bigwedge_{(x',x)\in \mathsf{ob} DX_r} \X(q_{\X}  d^r_0(x',x), q_{\X}  d^r_1(x',x))
\\ 
&&
&&
=
&&&
[DX_r, \X](q_{\X} d^r_0, q_{\X} d^r_1)
\\
&&
&&
\leq 
&&&
[KDX_r, K\X](K(q_\X d^r_0), K(q_\X d^r_1))
\\
&&
&&
=
&&&
\bigwedge_{B\in \mathsf{ob} KDX_r} K \X (Kq_{\X}    Kd^r_0(B), Kq_{\X}  Kd^r_1(B))
\\ 
&&
&&
\leq 
&&&
\bigwedge_{C\in \mathsf{ob}HX_r} K\X(Kq_{\X}  Kd^r_0  \sigma_{X_r}(C),Kq_{\X}  Kd^r_1 \sigma_{X_r}(C))
\\
&&
&&
= 
&&&
\bigwedge_{C\in \mathsf{ob}HX_r} K\X(Kq_{\X}  \sigma_{X_o}  Hd^r_0(C),Kq_{\X}  \sigma_{X_o}  Hd^r_1(C))
\end{align*}
In the above, we have used that $K$ is a $\Vcat$-functor and the naturality of $\sigma$. 
 
\medskip\noindent 
Hence there is a unique $\V$-functor $\sigma^\sharp_\X :H^\sharp \X \to K\X$ such that $\sigma^\sharp_\X  c_\X = Kq_\X  \sigma_{X_o}$ holds. 
Recall that $c_\X:HX_\extra \to H^\sharp \X$ is the colimiting cocone. 

\medskip\noindent 
We leave to the reader the verifications that the $\V$-functors $\sigma_\X$ are the components of a $\Vcat$-natural transformation $\sigma^\sharp :H^\sharp \to K$, and that the correspondences $\tau\mapsto\tau^\flat$ and
$\sigma\mapsto\sigma^\sharp$ are inverses to each other. \qedhere
\end{enumerate}
\end{proof}

\begin{cor}\label{cor:dense}
$D:\Set\to\Vcat$ is dense.
\end{cor}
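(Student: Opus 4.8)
The plan is to read off density directly from Theorem~\ref{thm:Lan} specialised to $H = D$, combined with Proposition~\ref{prop:Vcoinserter}. Recall that a $\Vcat$-functor $D$ is \emph{dense} precisely when the identity $\Vcat$-functor on $\Vcat$ is the pointwise left Kan extension of $D$ along itself, that is, when $\Lan{D}{D} \cong \Id$ with the unit of the extension being the identity. Since Theorem~\ref{thm:Lan} constructs the left Kan extension of \emph{any} $H:\Set\to\Vcat$ along $D$ --- in particular it already absorbs the size difficulty that $\Set$ is not a small $\Vcat$-category --- the case $H = D$ is exactly the instance we need.

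First I would specialise Theorem~\ref{thm:Lan} to $H = D$, obtaining $\Lan{D}{D} = D^\sharp$ with $D^\sharp\X = \Colim{N}{(D F_\X)}$. Next I would invoke Proposition~\ref{prop:Vcoinserter}, which asserts precisely that $\Colim{N}{(D F_\X)} \cong \X$, naturally in $\X$. Putting these together yields $\Lan{D}{D} \cong \Id$, the defining isomorphism for density, so the content of the corollary is essentially an immediate substitution into the two results just established.

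The one point requiring care is to confirm that the unit of this Kan extension is the identity. From the construction in Theorem~\ref{thm:Lan} the unit $\alpha:D\to D^\sharp D$ has components $\alpha_X = c_{DX}$, the colimiting cocone out of $D(DX)_\extra$. Since $DX$ is discrete its set of objects is $X$, whence $D(DX)_\extra = DX$, and under the isomorphism $D^\sharp DX \cong DX$ of Proposition~\ref{prop:Vcoinserter} this cocone is identified with the couniversal cocone $q_{DX}$, which is the identity on objects and therefore the identity $\V$-functor on the discrete $\V$-category $DX$. Thus $\alpha$ is carried to the identity, which is exactly what density demands. I expect this matching of $\alpha_X$ with $q_{DX}$ to be the only step needing attention; everything else is a direct appeal to Theorem~\ref{thm:Lan} and Proposition~\ref{prop:Vcoinserter}.
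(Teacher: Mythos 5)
Your proof is correct and takes essentially the same route as the paper: specialise Theorem~\ref{thm:Lan} to $H=D$, use Proposition~\ref{prop:Vcoinserter} to identify $\Lan{D}{D}$ with the identity on $\Vcat$, and invoke Kelly's Theorem~5.1. Your extra check that the unit $\alpha_X=c_{DX}$ is identified with the couniversal cocone $q_{DX}=\Id_{DX}$ is a detail the paper leaves implicit, but it is precisely the right point to verify for Kelly's characterisation of density to apply.
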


\begin{proof}
It follows from Proposition~\ref{prop:Vcoinserter} and Theorem~\ref{thm:Lan} that the identity $\Vcat\to\Vcat$ is the left Kan extension of $D:\Set\to\Vcat$ along $D$. But this is one of the equivalent definitions of density in~\cite[Thm.5.1]{kelly:book}
\end{proof}

\medskip\noindent 
To be able to compute these left Kan extensions in concrete examples, we show that from the proof above one can extract a more explicit construction in terms of ``shortest paths''. \footnote{
Left Kan extensions can be computed pointwise as colimits, hence using copowers and conical colimits. Due to the topologicity of $\Vcat_0$ over $\Set_0$, the latter are obtained (on object-level) as in $\Set_0$ and endowed with the corresponding $\V$-metric structure. This is why the subsequent formula~\eqref{eq:shortest-path} is reminiscent of the usual construction of quotient metric spaces~\cite{smyth}.
}

\begin{cor}
\label{cor:Lan by zigzag paths}
The left Kan extension $H^\sharp = \Lan{D}{H}$ of a $\Vcat$-functor $H:\Set \to \Vcat$ is obtained as follows: 
\begin{enumerate}
\item For a $\V$-category $\X$, $H^\sharp \X$ is the $\V$-category having the same objects as $HX_\extra$ (that is, the underlying set of objects of the $\V$-category obtained by applying $H$ to $X_\extra$, the set of objects of $\X$). 
For any objects $A'$ and $A$ of $H^\sharp\X$, their corresponding $\V$-hom  $H^\sharp \X (A', A)$ is given by
\begin{equation}\label{eq:shortest-path}
\begin{gathered}
H^\sharp \X (A', A)
=  
\bigvee 
\{
{HX_\extra(A'_0,A_0)} {\tensor}  r_1 {\tensor} HX_\extra (A'_1,A_1) {\tensor} {\ldots} {\tensor} r_n {\tensor} HX_\extra(A_n', A_n) 
\}
\end{gathered}
\end{equation}
where the suprema is computed over all paths
$$
(A'_0,A_0), (A'_1, A_1), \ldots , (A'_n, A_n) 
$$ 
where $A'=A'_0$, $A=A_n$, and all (possibly empty) tuples of elements 
$$
(r_1, \ldots, r_n)
$$ 
such that there are objects $C_i$ in $HX_{r_i}$ with $Hd^{r_i}_0(C_i) = A_{i-1}$, $Hd^{r_i}_1(C_i) = A'_i$, for all $i=1,n$:%
$$
\xymatrix@C=-12pt@R=30pt{
&
C_1 
\ar[dl]|{Hd^{r_1}_0}
\ar[dr]|{Hd^{r_1}_1}
&
&
C_2 
\ar[dl]|{Hd^{r_2}_0}
\ar[dr]|{Hd^{r_2}_1}
&
&
\quad 
\quad 
\ar@{}[d]|{.\ . \ .}
&
&
\quad C_n \, \, 
\ar[dr]|{Hd^{r_n}_1}
&
&
\\
*+[l]{A' =A_0' , A_0} 
&
&
{\quad A'_1 , A_1 \quad}
&
&
{\quad A'_2  , A_2 \quad}
&
&
{\quad A'_{n-1}  , A_{n-1}\quad} \ar@{<-}[]!<5ex,1ex>;[ur]!<0ex,-1ex>|{Hd^{r_n}_0}
&
&
*+[r]{A_n'  , A_n=A}
}
$$
\item The couniversal cocone $c_\X : HX_\extra \to H^\sharp \X$ is the identity on objects. 
\item For a $\V$-functor $f:\X\to\Y$, $H^\sharp f$ acts as $Hf_\extra$ on objects.
\end{enumerate} 
\ 
\end{cor}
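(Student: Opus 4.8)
\textbf{The plan is to} extract the explicit ``shortest path'' description of the $\V$-category $H^\sharp\X = \Colim{N}{(HF_\X)}$ from the universal property already established in Theorem~\ref{thm:Lan}, rather than recomputing the colimit from scratch. From that theorem I already know that $H^\sharp\X$ has $HX_\extra$ as its underlying set of objects, that the couniversal cocone $c_\X$ is the identity on objects, and that $H^\sharp f$ acts as $Hf_\extra$ on objects; so claims (2) and (3) are immediate and only the hom-formula~\eqref{eq:shortest-path} needs real work. The strategy is to define a candidate $\V$-category $\Z$ on the object set $HX_\extra$ by the right-hand side of~\eqref{eq:shortest-path}, verify it is a genuine $\V$-category, and then show it satisfies the couniversal property~\eqref{eq:couniversal-property} characterising $\Colim{N}{(HF_\X)}$.

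\textbf{First I would} check that the putative hom-assignment $\Z(A',A)$ defined by the supremum over paths in~\eqref{eq:shortest-path} satisfies the two $\V$-category axioms. The unit axiom $e\le \Z(A,A)$ follows from the empty path together with $e\le HX_\extra(A,A)$, and the triangle inequality $\Z(A',A'')\tensor\Z(A'',A)\le\Z(A',A)$ holds because a path from $A'$ to $A''$ concatenated with a path from $A''$ to $A$ is again an admissible path, so the tensor of two path-weights is dominated by the single-path weight of their concatenation (using associativity and commutativity of $\tensor$, and that $\tensor$ preserves suprema, being a left adjoint in each variable). Next I would verify that $\Z$ satisfies the couniversal property~\eqref{eq:couniversal-property}: a $\V$-functor $f^\sharp:\Z\to\Y$ is the same thing as a $\V$-functor $f:HX_\extra\to\Y$ with $r\le\bigwedge_{C\in HX_r}\Y(fHd^r_0(C),fHd^r_1(C))$ for all $r$. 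The forward direction is routine since the generating inequalities are special instances ($n=1$) of path-weights. For the backward direction, I would show that any such $f$ extends to a $\V$-functor out of $\Z$ by checking that $f$ does not increase distances: applying $f$ to a path and using non-expansiveness of $f$ on $HX_\extra$ together with the generating inequalities bounds each path-weight by $\Y(f(A'),f(A))$, whence $\Z(A',A)\le\Y(f(A'),f(A))$; uniqueness is forced because $c_\X$ is the identity on objects.

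\textbf{The hard part will be} pinning down exactly which inequalities the couniversal $\V$-functor must satisfy and confirming that the path-based supremum is the \emph{smallest} (largest in the quantale order) hom-structure making $c_\X$ couniversal -- that is, that~\eqref{eq:shortest-path} is tight and not merely an upper bound. Concretely, I expect the delicate point to be matching the indexing data of the generators $C_i\in HX_{r_i}$ with the shape of the diagram in~\eqref{eq:shortest-path}: the arrows $Hd^{r_i}_0(C_i)=A_{i-1}$ and $Hd^{r_i}_1(C_i)=A'_i$ must be threaded correctly so that the endpoints of successive segments are glued by genuine $HX_\extra$-distances. Once the candidate $\Z$ is shown to satisfy the same couniversal property~\eqref{eq:couniversal-property} as $\Colim{N}{(HF_\X)}$, uniqueness of colimits yields $\Z\cong H^\sharp\X$ over the identity on objects, which completes the proof.
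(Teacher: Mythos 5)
Your proposal is correct and follows essentially the same route as the paper's own proof: define the path-formula $\V$-category, verify the unit via the empty path and composition via path concatenation, show the $n=1$ paths yield the generating inequalities and that any $f$ satisfying \eqref{eq:couniversal-property} bounds every path weight, and conclude by uniqueness of colimits. Two small refinements: the domination of the tensor of two path weights by the weight of the concatenated path uses the composition law of the $\V$-category $HX_\extra$ to merge the two hom-terms meeting at the junction (not merely associativity, commutativity and cocontinuity of $\tensor$), and besides \eqref{eq:couniversal-property} one must also record the 2-dimensional aspect \eqref{eq:2-dim-aspect} of the enriched colimit --- trivial here because $c_\X$ is the identity on objects --- while claims (2) and (3) are not literally ``immediate'' from Theorem~\ref{thm:Lan} (which treats the colimit abstractly) but do fall out of your explicit construction.
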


\begin{proof}
First, notice that the construct $H^\sharp\X$ described above satisfies 
\begin{equation}\label{eq:quotient V-functor}
HX_\extra (A',A) \leq H^\sharp \X (A',A)
\end{equation}
for all objects $A', A$. 
In particular, $e\leq H^\sharp \X (A,A)$ holds.
Next, the inequality $H^\sharp \X(A', A) \tensor H^\sharp \X (A'', A') \leq H^\sharp \X(A'', A)$ can be established by path concatenation, using that $HX_\extra$ is a $\V$-category. Hence $H^\sharp \X$ is a $\V$-category. 

\medskip\noindent
We will verify that $H^\sharp \X$ as above is indeed $\Colim{N}{(H F\!_\X)}$. Let $c_\X:HX_\extra \to H^\sharp \X$ be the identity on objects. It is a $\V$-functor by~\eqref{eq:quotient V-functor}. 

\medskip\noindent
We show now that~\eqref{eq:Hsharp-quotient} holds. Let $r$ be an arbitrary element of the quantale and fix an object $C$ in $HX_r$. Then by employing the path
$$
\xymatrix@C=-2pt@R=30pt{ 
&
&
C
\ar[dl]|{Hd^{r}_0}
\ar[dr]|{Hd^{r}_1}
&
\\
Hd^r_0(C)
\ar@{}[r]_{,}
&
Hd^r_0(C)
&
&
Hd^r_1(C)
\ar@{}[r]_{,}
&
Hd^r_1(C)
}
$$
one can see that 
\begin{align*}
r =
& 
\
e \tensor r\tensor e 
\\
\leq 
& 
\
HX_\extra (Hd^r_0(C), Hd^r_0(C)) \tensor r\tensor HX_\extra (Hd^r_1(C), Hd^r_1(C)) \\
\leq 
& 
\
H^\sharp \X (Hd^r_0(C), Hd^r_1(C))
\end{align*}

\medskip\noindent
We check the 1-dimensional aspect of the colimit: Let $f:HX_\extra \to \Y$ be a $\V$-functor such that~\eqref{eq:couniversal-property} holds. 
Then the object-assignment $A \mapsto f(A)$ (uniquely!) extends to a $\V$-functor $f^\sharp : H^\sharp \X \to \Y$ such that $f^\sharp c_\X = f$. 
This is because for each path $(A'=A'_0,A_0), (A'_1, A_1), \ldots , (A'_n, A_n=A)$ and each tuple of elements $(r_1, \ldots, r_n)$ we have that
\begin{align*}
&HX_\extra(A_0',A_0) \tensor  r_1 \tensor HX_\extra (A'_1,A_1) \tensor \ldots  \tensor r_n  \tensor HX_\extra(A'_n, A_n)
\\
&
=
\\
&
HX_\extra(A_0',Hd^{r_1}_0(C_1)) \tensor  r_1 \tensor HX_\extra (Hd^{r_1}_1(C_1),Hd^{r_2}_0(C_2)) \tensor \ldots \tensor r_n  \tensor HX_\extra(Hd^{r_n}_0(C_n), A_n)
\\
&
\leq
\\
&
\Y(f(A_0'),fHd^{r_1}_0(C_1)) \tensor \Y(fHd^{r_1}_0(C_1), fHd^{r_1}_1(C_1)) \tensor \Y( f Hd^{r_1}_1(C_1) , f Hd^{r_2}_0(C_2)) \tensor \ldots 
\\
&
\tensor \Y(fHd^{r_n}_0(C_n), fHd^{r_n}_1(C_n))  \tensor \Y ( f Hd^{r_n}_0(C_n), f(A_n) )
\\
&
\leq 
\\
&
\Y(f(A'), f(A))
\end{align*}
where we used that $f$ is a $\V$-functor and~\eqref{eq:couniversal-property}. 
This proves that 
$$H^\sharp \X(A',A) \leq \Y(f^\sharp (A'), f^\sharp (A))$$ holds for all objects $A',A$.

\medskip\noindent
Finally, the 2-dimensional aspect of the colimit~\eqref{eq:2-dim-aspect} is trivial, because $c_\X$ is the identity on objects. 

\medskip\noindent 
Hence we have proved that $H^\sharp \X = \Colim{N}{(H F\!_\X)}$. 
The action of 
$H^\sharp$ on 
$\V$-functors is as follows: 
For $f:\X \to \Y$, 
$H^\sharp f:H^\sharp \X \to H^\sharp \Y$ is the unique $\V$-functor satisfying 
\[
(H^\sharp f) c_\X = c_\Y (H f_\extra)
\]
Given that both $c_\X$ and $c_\Y$ are identity on objects, we see that $H^\sharp f$ and $H f_\extra$ coincide on objects.  
\end{proof}

\begin{cor}
\label{cor:Vcat-ifications_exist}
Every $T:\Set\to\Set$ has a $\Vcat$-ification $T_\V:\Vcat \to \Vcat$.
\end{cor}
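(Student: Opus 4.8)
The plan is to recognise the statement as a direct instance of Theorem~\ref{thm:Lan}. By Definition~\ref{def:ext-lift}\eqref{def:ext}, the $\Vcat$-ification $T_\V$ is by definition the $\Vcat$-enriched left Kan extension $\Lan{D}{(DT)}$ of the composite $DT:\Set\to\Vcat$ along $D$, together with its unit. Since Theorem~\ref{thm:Lan} asserts that \emph{every} $\Vcat$-functor $H:\Set\to\Vcat$ admits such a left Kan extension $H^\sharp$, given explicitly by $H^\sharp\X=\Colim{N}{(HF_\X)}$, the entire task reduces to checking that $DT$ is a legitimate $\Vcat$-functor and then setting $H:=DT$.

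The enrichment of $DT$ is supplied by the first item of Remark~\ref{ex:discrete V-cat-functor}. An arbitrary $T:\Set\to\Set$ is, a priori, merely an ordinary functor $T:\Set_o\to\Set_o$; but since $D_\ast:\Set\hspace{.9pt}\mbox{-}\hspace{.9pt}{\mathsf{cat}}\to\Vcat\hspace{.9pt}\mbox{-}\hspace{.9pt}{\mathsf{cat}}$ is a $2$-functor, every such $T$ enriches canonically to a $\Vcat$-functor $T:\Set\to\Set$ (where, per the notation convention, $\Set$ abbreviates $D_\ast\Set$). Post-composing with the $\Vcat$-functor $D:\Set\to\Vcat$ then yields a genuine $\Vcat$-functor $DT:\Set\to\Vcat$, to which Theorem~\ref{thm:Lan} applies verbatim.

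Applying the theorem with $H:=DT$ produces a $\Vcat$-functor $(DT)^\sharp:\Vcat\to\Vcat$ together with a $\Vcat$-natural unit $\alpha:DT\to(DT)^\sharp D$ witnessing $(DT)^\sharp\cong\Lan{D}{(DT)}$. By Definition~\ref{def:ext-lift}\eqref{def:ext} this is precisely the $\Vcat$-ification, so we set $T_\V:=(DT)^\sharp$. If one wants the hom-objects explicitly, Corollary~\ref{cor:Lan by zigzag paths} yields them via the shortest-path formula~\eqref{eq:shortest-path} specialised to $H=DT$: the objects of $T_\V\X$ are those of $T(X_\extra)$, and distances are computed as suprema over zig-zag paths through the relations $X_r$.

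There is no real obstacle here, as the substantive work has already been discharged in Theorem~\ref{thm:Lan}; the corollary is purely formal. The only point worth a moment's attention---and the only place where hypotheses such as integrality, accessibility, or smallness could conceivably enter---is the enrichment of $T$. But because this enrichment is the formal image under the $2$-functor $D_\ast$ and uses nothing about $T$ beyond ordinary functoriality, it holds for an arbitrary commutative quantale $\V$ and for an arbitrary, not necessarily accessible, $T$.
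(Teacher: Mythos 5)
Your proposal is correct and matches the paper's own proof, which is exactly the one-line application of Theorem~\ref{thm:Lan} to $H = DT$. Your extra care in justifying that $DT$ is a genuine $\Vcat$-functor via the enrichment $D_\ast$ of Remark~\ref{ex:discrete V-cat-functor} is a detail the paper leaves implicit in its notational conventions, but it is the right justification and does not change the argument.
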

\begin{proof}
Apply Theorem~\ref{thm:Lan} to the composite $H=D T:\Set\to\Vcat$.
\end{proof}


\subsection{When the unit of the left-Kan extension is an isomorphism}\label{sec:unit-kan-iso}

\

\bigskip\noindent
Recall from Proposition~\ref{prop:D-ff} that the discrete functor $D:\Set \to \Vcat$ is fully faithful (hence left Kan extensions along $D$ are genuine extensions) if and only if the quantale is integral. In particular, for $e<\top$, there is no guarantee that the unit $H\to (\mathsf{Lan}_D H)D$ of the left-Kan extension of $H$ along $D$ is an isomorphism.

\medskip\noindent
The reason why $D$ is not fully-faithful for non-integral $\V$ is that, for a discrete $\V$-category $\X=DX$, the relation $X_\top$ as defined in Proposition~\ref{prop:Vcoinserter} is empty in case $e<\top$, which in turn  forces the self-distances in  $(\mathsf{Lan}_{D}H)D\emptyset$ to be $\top$ whereas self-distances in $H\emptyset$ may be $<\top$.

\medskip\noindent
In this section we shall see necessary and sufficient conditions for $\Lan{D}H$ to coincide with $H$ on discrete $\V$-categories. 

\medskip\noindent
Recall that $\One$ is the one-element $\Vcat$ with self-distance $e$ and $\One_\top$ is the one-element $\Vcat$ with self-distance $\top$. We use the same notation to denote the respectice constant functors  $\One:\Set\to\Vcat$  and $\One_\top:\Vcat\to\Vcat$.
The main technical observation is contained in the following

\begin{exa}\label{exle:One}
The left Kan extension of the functor $\One:\Set\to\Vcat$ is the functor $\One_\top:\Vcat\to\Vcat$.
\end{exa}

\begin{defi}
A {\em constant} of a $\Vcat$-functor $K:\Vcat\to \Vcat$ is a $\Vcat$-natural transformation $\One_\top\to K$. A {\em constant} of a $\Vcat$-functor $H:\Set\to \Vcat$ is a $\Vcat$-natural transformation $\One\to H$. 
\end{defi}

\begin{rem} The constants of a functor $\Vcat\to\Vcat$ form a $\V$-category in which all self-distances are $\top$. --- 
Let $\tau:\One_\top\to K$ be a constant. Then each $\tau_\X: \One_\top \to K \X$ is a $\V$-functor, picking an object $\tau_\X(0)$ in $K \X$ with 
\[
\One_\top(0,0)=\top \leq K\X(\tau_\X(0),\tau_\X(0))
\]
hence we obtain the self-distance
\[
[\One_\top, K \X](\tau_\X, \tau_\X) = \top
\]
and consequently 
\[
[\Vcat,\Vcat](\One_\top,K)(\tau,\tau)=\bigwedge_{\X}[\One_\top, K \X](\tau_\X, \tau_\X) = \top
\]
\end{rem}

\begin{prop}\label{prop:constantsvcat} 
The $\V$-category of constants of a functor $\Vcat\to\Vcat$ is isomorphic to its value on the empty $\Vcat$.
\end{prop}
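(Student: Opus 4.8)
The plan is to recognise the functor $\One_\top$ as a representable $\Vcat$-functor and then apply the enriched Yoneda lemma. The empty $\V$-category $\mathbb{0}=D\emptyset$ is initial in $\Vcat$, so for every $\X$ there is a unique (empty) $\V$-functor $!_\X:\mathbb{0}\to\X$. Crucially, the hom-$\V$-category $\Vcat(\mathbb{0},\X)=[\mathbb{0},\X]$ has this single functor as its only object, and its self-distance is the empty infimum $[\mathbb{0},\X](!_\X,!_\X)=\bigwedge_{y\in\mathbb{0}}\X(!_\X y,!_\X y)=\top$. Hence $\Vcat(\mathbb{0},\X)\cong\One_\top\X$ for every $\X$, and since the hom-action of each of the two functors is necessarily the unique $\V$-functor $\Vcat(\X,\Y)\to[\mathbb{1}_\top,\mathbb{1}_\top]\cong\mathbb{1}_\top$, these isomorphisms assemble into a $\Vcat$-natural isomorphism $\One_\top\cong\Vcat(\mathbb{0},-)$ of $\Vcat$-functors $\Vcat\to\Vcat$.

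With this identification in hand the result is immediate: the $\V$-category of constants is the hom-object $[\Vcat,\Vcat](\One_\top,K)$, and
\[
[\Vcat,\Vcat](\One_\top,K)\;\cong\;[\Vcat,\Vcat](\Vcat(\mathbb{0},-),K)\;\cong\;K\mathbb{0},
\]
where the second isomorphism is the covariant enriched Yoneda lemma~\cite{kelly:book}, applied with base of enrichment $\Vcat$ to the $\Vcat$-functor $K:\Vcat\to\Vcat$ valued in the base. Although $\Vcat$ is large, the hom-object $[\Vcat,\Vcat](\Vcat(\mathbb{0},-),K)$ exists as an end because $\Vcat$ is complete, and Yoneda is a pointwise statement, so largeness causes no difficulty.

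For concreteness, and because the explicit form is what later sections use, the isomorphism sends a constant $\tau$ to $\tau_{\mathbb{0}}(0)\in K\mathbb{0}$; its inverse sends $a\in K\mathbb{0}$ to the constant with $\X$-component $K(!_\X)(a)$, naturality being automatic from initiality since $h\cdot!_\X=!_\Y$ for all $h:\X\to\Y$. That distances match, i.e. $\bigwedge_\X K\X(\tau_\X 0,\sigma_\X 0)=K\mathbb{0}(\tau_{\mathbb{0}}0,\sigma_{\mathbb{0}}0)$, follows by taking the $\X=\mathbb{0}$ term for one inequality and using the $\V$-functoriality of each $K(!_\X)$ for the reverse.

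The step carrying the real content is the identification $\One_\top\cong\Vcat(\mathbb{0},-)$, and within it the appearance of $\top$ as the self-distance of the identity on $\mathbb{0}$ via the empty infimum --- the same phenomenon that breaks full faithfulness of $D$ for non-integral $\V$ in Proposition~\ref{prop:D-ff}, here put to constructive use. A pleasant by-product, delivered for free by the Yoneda isomorphism rather than assumed, is that every object of $K\mathbb{0}$ has self-distance $\top$; one can also see this directly, since $K$ being a $\Vcat$-functor gives $\top=\Vcat(\mathbb{0},\mathbb{0})(\Id,\Id)\leq\bigwedge_{a\in K\mathbb{0}}K\mathbb{0}(a,a)$, which is exactly what guarantees that the inverse map above lands among genuine constants.
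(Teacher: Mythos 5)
Your proof is correct, and it takes a genuinely different route from the paper's. The paper proves Proposition~\ref{prop:constantsvcat} by the chain
\[
[\Vcat,\Vcat](\One_\top,K)\;\cong\;[\Set,\Vcat](\One,KD)\;\cong\;[\Set,\Vcat](\Set(\emptyset,-),KD)\;\cong\;KD\emptyset,
\]
where the first isomorphism is the universal property of the left Kan extension $\One_\top\cong\Lan{D}{\One}$ (Example~\ref{exle:One}, itself an instance of Theorem~\ref{thm:Lan}), and the Yoneda lemma is then applied over the base $\Set$. You instead stay entirely inside $\Vcat$: you observe that $\One_\top$ is the representable $\Vcat(\mathbb 0,-)$ --- because the unique $\V$-functor $\mathbb 0\to\X$ has as self-distance the empty infimum, which is $\top$ --- and then apply the enriched Yoneda lemma over $\Vcat$ once. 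The two routes are secretly linked by the general fact that left Kan extension along $D$ carries the representable $\Set(\emptyset,-)$ to the representable $\Vcat(D\emptyset,-)$, but as written they are different decompositions. What the paper's factorisation buys is the intermediate identification of the constants of $K$ with the constants of $KD$, which the surrounding text uses and which makes Proposition~\ref{prop:constantsset} an immediate companion statement; what yours buys is self-containedness --- no appeal to Example~\ref{exle:One} or to any Kan-extension machinery --- plus explicit formulas for the isomorphism and its inverse, and it isolates the conceptual point (empty infimum equals $\top$, the same phenomenon behind Proposition~\ref{prop:D-ff}) as a representability statement. One cosmetic remark: your justification of the size issue is slightly off, since the hom-object $[\Vcat,\Vcat](\Vcat(\mathbb 0,-),K)$ exists not because $\Vcat$ is complete (the relevant end is indexed by a large category), but because the Yoneda lemma itself exhibits that end as $K\mathbb 0$; the paper's proof glosses the same point when it applies Yoneda over the large category $\Set$.
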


\begin{proof}
There is an isomorphism of $\V$-categories
$$
[\Vcat,\Vcat](\One_\top, K) \cong [\Set,\Vcat](\One, KD)\cong [\Set, \Vcat](\Set(\emptyset,-),KD)\cong KD\emptyset
$$
where the the first isomorphism is due to Example~\ref{exle:One} and the last isomorphism is the Yoneda lemma. 
\end{proof}

\noindent
The remark and the proposition combine to the result that, for any $\Vcat$-functor $K:\Vcat\to\Vcat$, the self-distances in $KD\emptyset$ must be $\top$. In particular, we have 

\begin{cor}
If $e<\top$, then there is no constant functor $\Vcat\to\Vcat$ with value $\One$, or with value $DX$ for any $X\in\Set$.
\end{cor}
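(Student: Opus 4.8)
The plan is to run everything through the single fact recorded immediately before the statement: for \emph{every} $\Vcat$-functor $K:\Vcat\to\Vcat$, all self-distances of the $\V$-category $KD\emptyset$ equal $\top$. I would first spell out why this holds, since it is the whole engine of the argument. By Proposition~\ref{prop:constantsvcat} there is an isomorphism of $\V$-categories $KD\emptyset\cong[\Vcat,\Vcat](\One_\top,K)$, and by the preceding Remark every self-distance of this $\V$-category of constants is $\top$. It then remains only to note that an isomorphism of $\V$-categories preserves hom-objects on the nose: if $F:\A\to\B$ has $\V$-functor inverse $G$, then $\A(a,a')\le\B(Fa,Fa')\le\A(GFa,GFa')=\A(a,a')$, so $\B(Fa,Fa')=\A(a,a')$ and in particular self-distances correspond. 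Hence all self-distances of $KD\emptyset$ are $\top$.

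Granting this, I would argue by contradiction. If some $\Vcat$-functor $K$ were the constant functor with value $\One$, then $KD\emptyset=\One$, whose single self-distance is $\One(0,0)=e$; the fact above then forces $e=\top$, contrary to $e<\top$. The case of value $DX$ with $X$ non-empty is identical: $DX$ is discrete, so for any $x\in X$ its self-distance is $DX(x,x)=e$ by~\eqref{eq:discrete V-category}, again forcing $e=\top$. Since $D1\cong\One$ for a one-element set $1$, the two assertions are in fact instances of the same computation.

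I expect no serious obstacle, as essentially all the work is already done by Proposition~\ref{prop:constantsvcat} and the Remark; the only point demanding attention is the boundary case $X=\emptyset$. There $DX=\mathbb 0$ has no objects and hence no self-distances, so the condition ``all self-distances equal $\top$'' is vacuously satisfied and produces no contradiction --- indeed the constant functor at $\mathbb 0$ genuinely \emph{is} a $\Vcat$-functor, since $\Vcat(\mathbb 0,\mathbb 0)\cong\One_\top$ dominates every distance $\Vcat(\X,\Y)(f,g)$. I would therefore read the result with $X$ non-empty (equivalently, for those $DX$ possessing at least one self-distance), which is exactly the content expressing that, for non-integral $\V$, $D$ admits no constant functor into $\Vcat$.
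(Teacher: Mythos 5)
Your proof is correct and coincides with the paper's own: the corollary is stated there as an immediate consequence of combining the Remark (all self-distances of the constants of a $\Vcat$-functor are $\top$) with Proposition~\ref{prop:constantsvcat}, which is exactly your argument, merely with the routine fact that isomorphisms of $\V$-categories preserve distances made explicit. Your caveat about the edge case is moreover a genuine catch rather than a defect: the constant functor at $\mathbb 0 = D\emptyset$ \emph{is} a legitimate $\Vcat$-functor (every hom-map into $\Vcat(\mathbb 0,\mathbb 0)\cong\One_\top$ is trivially a $\V$-functor), so the corollary as printed overreaches at $X=\emptyset$, and both your argument and the paper's establish it only for the value $\One$ and for $DX$ with $X$ nonempty.
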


\noindent More generally, what is at stake here is that $e$ is not a retract of $\top$, see the discussion towards the end of~\cite[Ch.3.9]{kelly:book}.

\medskip\noindent
Essentially the same argument as for Proposition~\ref{prop:constantsvcat} also proves

\begin{prop}\label{prop:constantsset} The $\V$-category of constants of a functor $\Set\to\Vcat$ is isomorphic to its value on the empty set.
\end{prop}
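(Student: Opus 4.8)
The plan is to run essentially the same argument as in the proof of Proposition~\ref{prop:constantsvcat}, but now shortened by one step: because the target functor $H$ is already defined on $\Set$, no left Kan extension intervenes, and in particular Example~\ref{exle:One} is not needed. The whole proof rests on a single observation, namely that the constant $\Vcat$-functor $\One:\Set\to\Vcat$ is isomorphic to the representable $\Vcat$-functor $\Set(\emptyset,-):\Set\to\Vcat$. Granting this, the $\V$-category of constants of $H$, which by definition is $[\Set,\Vcat](\One,H)$, rewrites as $[\Set,\Vcat](\Set(\emptyset,-),H)$, and the enriched Yoneda lemma then identifies this with $H\emptyset$.

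First I would establish the isomorphism $\One\cong\Set(\emptyset,-)$. Recall from Section~\ref{sec:discrete functor} that $\Set$ denotes the $\Vcat$-enriched category $D_\ast\Set$, whose hom-objects are $\Set(X,Y)=D(\Set_o(X,Y))$. Since $\Set_o(\emptyset,X)$ is a singleton for every set $X$, and $D$ carries the one-element set to $\One$, we obtain $\Set(\emptyset,X)=D(\Set_o(\emptyset,X))\cong\One$, and this is visibly natural in $X$ (all the transition $\V$-functors being identities on the one object). Hence $\Set(\emptyset,-)\cong\One$ as $\Vcat$-functors $\Set\to\Vcat$. Substituting this isomorphism into $[\Set,\Vcat](\One,H)$ and applying the Yoneda lemma exactly as in Proposition~\ref{prop:constantsvcat} gives the chain $[\Set,\Vcat](\One,H)\cong[\Set,\Vcat](\Set(\emptyset,-),H)\cong H\emptyset$, which is the asserted isomorphism of $\V$-categories.

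The one point that deserves care, and which I expect to be the only genuine obstacle, is verifying that $\One\cong\Set(\emptyset,-)$ holds at the level of the $\Vcat$-enrichment and not merely for the underlying ordinary functors. Concretely one must check that the identification of each hom-object $\Set(\emptyset,X)$ with $\One$ is compatible with composition and units, so that it assembles into a genuine $\Vcat$-natural isomorphism; this reduces to the facts that $D$ preserves the terminal object (so that $D$ of a singleton is $\One$, cf. the strong monoidality $D1\cong\One$ recorded in Section~\ref{sec:discrete functor}) and that $\Set_o(\emptyset,X)$ is terminal in $\Set_o$. Once this compatibility is in place, the remainder is a direct transcription of the computation used for Proposition~\ref{prop:constantsvcat}, and the proof is complete.
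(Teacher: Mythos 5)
Your proof is correct and coincides with the approach the paper itself takes: the paper disposes of this proposition by invoking ``essentially the same argument as for Proposition~\ref{prop:constantsvcat}'', which is precisely your chain $[\Set,\Vcat](\One,H)\cong[\Set,\Vcat](\Set(\emptyset,-),H)\cong H\emptyset$, resting on the identification $\One\cong\Set(\emptyset,-)$ (from $\Set(\emptyset,X)=D(\Set_o(\emptyset,X))\cong\One$) followed by the enriched Yoneda lemma. You also correctly observe that the first step of the paper's proof of Proposition~\ref{prop:constantsvcat} (Example~\ref{exle:One}, the Kan-extension comparison) is not needed here since $H$ is already a functor on $\Set$.
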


\noindent
To summarise, for any functor $K:\Vcat\to\Vcat$, the constants of $K$ are isomorphic to the constants of $KD$, and isomorphic to $KD\emptyset$. 
In particular, self-distances on elements of $KD\emptyset$ are $\top$.

\medskip\noindent 
Therefore,  in order to have $H \cong (\Lan{D}H)D$, self-distances in $H\emptyset$ must be $\top$. 
We are now going to show that this necessary condition is also sufficient.

\medskip\noindent An analysis of why, in Example~\ref{exle:One}, the left Kan extension of $\One$ along $D$ is $\One_\top$ but not $\One$, suggests that the reason for the failure of the unit being an isomorphism resides in the fact that  $\One$ is not a cocone over the $\V$-nerve $(N, \One)$ of $\One$. This leads to 

\begin{defi}
The functor $H:\Set \to \Vcat$ {\em preserves $\V$-nerves of sets} if the identity $\V$-functor on $HX$ induces a cocone of $HX$ over the diagram $(N, HF_{DX})$ for each $X$.  
\end{defi}

\begin{rem}\label{rem:pres-discr-nerves}
\hfill
\begin{enumerate}

\item $H$ preserves $\V$-nerves of sets iff  $\Id_{HX}$ satisfies~\eqref{eq:couniversal-property}, that is, iff
\[
r\leq \bigwedge_{C \in H(DX)_r} HX(Hd^r_0(C), Hd^r_1(C))
\]
for all $r\in\V$.
\item \label{rem:pres-discr-nerv-impl-unit-iso}
If $H$ preserves $\V$-nerves of sets, then by the universal property of the colimit there is a $\V$-functor $\beta_X:(\Lan{D}H) DX \to HX$ such that the identity of $HX$ factorises as 
\[
\xymatrix@C=35pt{HX \ar[r]^{\alpha_{X}} \ar[dr]_{\Id} & H^\sharp DX \ar[d]^-{\beta_X}\\
& HX}
\]
Since $\alpha_X$ is the identity on objects, $\beta_X$ will again be so. Moreover, because both $\alpha_X$ and $\beta_X$ are $\V$-functors, the inequalities 
\[
HX(A',A) \leq H^\sharp DX(A',A) \leq HX(A',A)
\]
hold for each $A',A$. Hence $(\Lan{D}H)  DX(A',A) = HX(A',A)$, that is, the unit of the left Kan extension $\alpha_X$ is an isomorphism (actually the identity). 
\end{enumerate}
\end{rem}

\begin{thm}\label{thm:unitiso}
Let $H:\Set \to \Vcat$ a functor.  
The following are equivalent.
\begin{enumerate}
\item
The unit of the left-Kan extension $H\to(\Lan{D}H)D$ is an isomorphism.
\item Self-distances on elements of $H\emptyset$ are $\top$.
\item All self-distances of all constants of $H$ are $\top$.
\item $H$ preserves $\V$-nerves of sets.
\end{enumerate}
These conditions always hold if $\V$ is integral, or, for general $\V$, if $H\emptyset=\emptyset$.
\end{thm}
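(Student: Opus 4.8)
The plan is to prove the cycle $(4)\Rightarrow(1)\Rightarrow(2)\Rightarrow(4)$ together with the separate equivalence $(2)\Leftrightarrow(3)$, relying on results already in place. The implication $(4)\Rightarrow(1)$ needs nothing new: it is precisely Remark~\ref{rem:pres-discr-nerves}\eqref{rem:pres-discr-nerv-impl-unit-iso}, where preservation of $\V$-nerves produces a $\V$-functor $\beta_X$ splitting the unit $\alpha_X$, both being the identity on objects, so that $\alpha_X$ is forced to be an isomorphism. The equivalence $(2)\Leftrightarrow(3)$ is immediate from Proposition~\ref{prop:constantsset}: the $\V$-category of constants of $H$ is isomorphic to $H\emptyset$, and since an isomorphism of $\V$-categories matches up self-distances, all constants of $H$ have self-distance $\top$ exactly when every element of $H\emptyset$ does.

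For $(1)\Rightarrow(2)$ I would apply to $K=\Lan{D}H$ the fact, recorded after Proposition~\ref{prop:constantsvcat}, that for any $\Vcat$-functor $K:\Vcat\to\Vcat$ the self-distances in $KD\emptyset$ are all $\top$. Thus every self-distance in $(\Lan{D}H)D\emptyset$ equals $\top$; and if the unit is an isomorphism, its component $\alpha_\emptyset:H\emptyset\to(\Lan{D}H)D\emptyset$ is an isomorphism of $\V$-categories, so transporting self-distances back along $\alpha_\emptyset$ shows that they are $\top$ in $H\emptyset$ as well.

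The technical heart is $(2)\Rightarrow(4)$, which I would prove by checking the criterion of Remark~\ref{rem:pres-discr-nerves} directly, namely $r\le\bigwedge_{C\in H(DX)_r}HX(Hd^r_0(C),Hd^r_1(C))$ for every $r$, by analysing the position of $r$ relative to $e$. Since $DX$ is discrete, $(DX)_r$ equals $X\times X$ if $r=\bot$, the diagonal if $\bot<r\le e$, and $\emptyset$ if $r\not\le e$. When $r=\bot$ the inequality is trivial. When $\bot<r\le e$ the projections $d^r_0,d^r_1$ agree on the diagonal, hence $Hd^r_0(C)=Hd^r_1(C)$ and each summand is a self-distance, so it is $\ge e\ge r$. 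The only case that invokes hypothesis~$(2)$ is $r\not\le e$: then $(DX)_r=\emptyset$, so $H(DX)_r=H\emptyset$ and both $d^r_0,d^r_1$ coincide with the unique map $\emptyset\to X$; therefore $Hd^r_0=Hd^r_1=H(!_X)$ is a single $\V$-functor, and for every $C\in H\emptyset$ its $\V$-functoriality together with $(2)$ gives $\top=H\emptyset(C,C)\le HX(H(!_X)(C),H(!_X)(C))$, so the summand is $\top\ge r$. I expect this last case to be the only delicate point, since it is exactly here — where the top-level relation collapses to the empty set — that the failure of fully-faithfulness observed before the theorem is compensated by condition~$(2)$.

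It remains to treat the closing assertions. If $\V$ is integral then $D$ is fully faithful by Proposition~\ref{prop:D-ff}, so condition~$(1)$ holds by Remark~\ref{rem:lan}\eqref{unit of Lan iso}; equivalently, $e=\top$ forces every self-distance, lying between $e$ and $\top$, to equal $\top$, which is condition~$(2)$. Finally, if $H\emptyset=\emptyset$ then $H\emptyset$ has no elements and condition~$(2)$ holds vacuously.
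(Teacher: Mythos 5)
Your proof is correct and takes essentially the same route as the paper's: the implication $(4)\Rightarrow(1)$ via Remark~\ref{rem:pres-discr-nerves}, the equivalence with constants via Propositions~\ref{prop:constantsvcat} and~\ref{prop:constantsset}, and the identical three-case analysis of $(DX)_r$ (with hypothesis~$(2)$ used only when $r\nleq e$) for $(2)\Rightarrow(4)$. The only differences are cosmetic — you route $(1)\Rightarrow(2)$ directly rather than through $(3)$, and you spell out the closing assertions about integral $\V$ and $H\emptyset=\emptyset$, which the paper leaves implicit.
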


\begin{proof}
For $(1)\Rightarrow (3)$ and  $(2)\Leftrightarrow (3)$ see  the comment after Proposition~\ref{prop:constantsset}. $(4)\Rightarrow (1)$  has been established in Remark~\ref{rem:pres-discr-nerves}. For $(2)\Rightarrow (4)$, let $X$ be an arbitrary set and assume that $(2)$ holds.
We will show that the identity of $HX$ satisfies~\eqref{eq:couniversal-property}
\[
r\leq \bigwedge_{C \in H(DX)_r} HX(Hd^r_0(C), Hd^r_1(C))
\]
for each $r$, hence $H$ preserves $\V$-nerves of sets by Remark~\ref{rem:pres-discr-nerves}.
We start by examining the the associated binary relations ($r$-level sets) of the (discrete) $\V$-category $DX$: 
\[
(DX)_r =  \begin{cases} X \times X & r=\bot \\ \Delta_X=\{(x,x)\mid x\in X\} & \bot <r\leq e \\ \emptyset & \mbox{otherwise}\end{cases}
\]
Condition~\eqref{eq:couniversal-property} is automatically satisfied for $r=\bot$. 

\noindent If $\bot<r\leq e$, then $(DX)_r$ is the diagonal of $X$ and $d_0^r=d^r_1$ is a bijection. Hence $Hd_0^r=Hd_1^r$, being an isomorphism of $\V$-categories, implies that
\[
r \leq e\leq \bigwedge_{C\in H\Delta_X} H\Delta_X(C,C)=\bigwedge_{C\in H(DX)_r} HX(Hd^r_0(C),Hd^r_1(C))
\]
proving that~\eqref{eq:couniversal-property} holds for $\bot<r\leq e$.

\noindent We will now consider the case $r\nleq e$. Then $(DX)_r=\emptyset$ and $d^r_0=d^r_1$ is the unique map $!:\emptyset \to X$. By hypothesis, $H\emptyset$ has all self-$\V$-distances equal to $\top$. Then
\[
r \leq \top = \bigwedge_{C\in H\emptyset}H\emptyset(C,C)\leq \bigwedge_{C \in H\emptyset} HX(H! (C), H! (C)) = \bigwedge_{C\in H(DX)_r} HX(Hd^r_0(C),Hd^r_1(C))
\]
which establishes \eqref{eq:couniversal-property} and, therefore, that $H$ preserves $\V$-nerves of sets.
\end{proof}


\subsection{Characterisation theorems and density} 

\ 

\bigskip\noindent
There is a well-known connection between left Kan extensions and colimits. For example, the finitary (that is, filtered colimit preserving) functors $\Set\to\Set$ are precisely those which are left Kan extensions of their restrictions along the inclusion of finite sets
\begin{equation}\label{eq:finitary}
\vcenter{
\xymatrix@R=20pt@C=32pt{
\Set \ar[r]^{} & \Set\\
\Fin \ar[ru]_{} \ar[u]& 
}}
\end{equation}
In this section the functors that arise as $\Lan{D}{H}$ 
\begin{equation}\label{eq:discretearities}
\vcenter{
\xymatrix{
\Vcat \ar[r]^{} & \Vcat\\
\Set \ar[ru]_(.58){H} \ar[u]^(.45)D& 
}}
\end{equation}
are characterised as those functors that preserve $\V$-nerves~\eqref{eq:Vnerve}. The proof proceeds by showing that the colimits exhibited in~\eqref{eq:Vnerve} and subsequently in Proposition~\ref{prop:Vcoinserter} constitute a density presentation of $D$~\cite[Section~5.4]{kelly:book}.

\medskip\noindent
The functors $\Set\to\Set$ that arise as left Kan extensions in diagram \eqref{eq:finitary} are precisely those that have a presentation by finitary operations and equations. Note that the elements $n\in\Fin$ appear as the arities $X^n\to X$ of the operations of the presentation. Similarly, we may speak of the functors that arise as left Kan extensions in diagram \eqref{eq:discretearities} as functors that have a \emph{presentation in discrete arities}, with the presentation given by the usual formula $(\Lan{D} H)\X=\int^S \X^{DS} \otimes HS$,%
\footnote{Note that although $\Set$ is not small, the coend does exist, by~\cite[Section~4.2]{kelly:book}.
} %
but which is more easily computed by applying $H$ to the $\V$-nerve as in Equation~\eqref{eq:H(Vnerve)}.

\medskip\noindent
A complication, studied in Proposition~\ref{prop:D-ff}, is that $D:\Set\to\Vcat$ is not fully faithful if $\V$ is not integral, that is, if $e<\top$. 

\medskip\noindent
Nevertheless we know from Corollary~\ref{cor:dense}, even without restricting to integral $\V$, that the discrete functor $\Set\to\Vcat$ is dense. Here we show that this result also follows for general reasons that do not depend on the particular presentation of $\V$-categories as colimits of $\V$-nerves.

\begin{prop}\label{prop:D-dense}
The  $\Vcat$-functor $D:\Set\to\Vcat$ is dense.
\end{prop}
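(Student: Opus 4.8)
The plan is to prove density straight from the definition, by showing that the \emph{nerve} $\Vcat$-functor
\[
\widetilde{D}\colon \Vcat \to [\Set^\op,\Vcat], \qquad \widetilde{D}\X = \Vcat(D-,\X),
\]
is fully faithful, \emph{without} going through the $\V$-nerve colimits of Proposition~\ref{prop:Vcoinserter}. The one general fact I would isolate first is that the unit $\V$-category $\One = D1$ is already a dense object: a $\V$-functor $DS\to\X$ is just a function from $S$ to the objects of $\X$, with $[DS,\X](f,g)=\bigwedge_{s\in S}\X(fs,gs)$, so the representable $\Vcat(\One,-)=\Vcat(D1,-)$ is isomorphic to $\Id_\Vcat$ as a $\Vcat$-functor; in particular $\widetilde D\X$ evaluated at the one-element set $1$ recovers $\X$ together with its full $\V$-enrichment, and evaluation at $1$ gives $\mathrm{ev}_1\colon[\Set^\op,\Vcat]\to\Vcat$ with $\mathrm{ev}_1\circ\widetilde D\cong\Id_\Vcat$.

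I would then transfer this to $D$ by a naturality argument. Faithfulness of $\widetilde D$ is immediate from $\mathrm{ev}_1\circ\widetilde D\cong\Id_\Vcat$. For fullness, given a $\Vcat$-natural $\theta\colon\widetilde D\X\to\widetilde D\Y$, set $h:=\theta_1\colon\X\to\Y$. For every set $S$, every function $f\colon S\to X_\extra$ (an object of $\Vcat(DS,\X)$), and every element $s\in S$, naturality of $\theta$ along the point $s\colon 1\to S$ in $\Set$ — which acts as ``evaluate at $s$'', $\Vcat(Ds,\X)(f)=f(s)$ — forces $\theta_S(f)(s)=\theta_1(f(s))=h(f(s))$. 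As this holds for all $s$, we get $\theta_S(f)=h\circ f$, i.e.\ $\theta=\widetilde D h$. For the enriched part of fullness I would check that the $\V$-distances agree: computing the hom in the presheaf $\V$-category as an end gives
\[
[\Set^\op,\Vcat](\widetilde D\X,\widetilde D\Y)(\widetilde D h,\widetilde D h') = \bigwedge_{S}\ \bigwedge_{f\colon S\to X_\extra}\ \bigwedge_{s\in S}\Y(hf(s),h'f(s)) = \bigwedge_{x\in X_\extra}\Y(hx,h'x) = \Vcat(\X,\Y)(h,h'),
\]
the infimum being attained already at $S=1$. This makes $\widetilde D$ fully faithful, hence $D$ dense by the nerve characterisation of density~\cite[Thm.~5.1]{kelly:book}.

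The step I expect to be the main obstacle is exactly this transfer, because it is \emph{not} an instance of a formal ``a composite $Dj$ is dense, therefore $D$ is dense'' principle: the restriction functor $[\Set^\op,\Vcat]\to\Vcat$ given by evaluation at a one-element set is not faithful in general, so density of the single object $\One$ does not propagate for abstract reasons. What makes the argument work is the concrete description of $\Vcat(DS,\X)$ as the $\V$-category of functions $S\to X_\extra$ with the pointwise-infimum metric, together with naturality along the point-inclusions $1\to S$; these collapse every component $\theta_S$ to post-composition with $\theta_1$ and reduce the end defining the presheaf hom to its $S=1$ term. I would also remark that, in contrast with Proposition~\ref{prop:D-ff}, the failure of fully-faithfulness of $D$ for non-integral $\V$ — a phenomenon concerning only the empty $\V$-category — is irrelevant here, since the entire argument uses only the non-empty set $1$ and the well-behaved representable at $\One$.
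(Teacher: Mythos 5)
Your proof is correct, and it takes a genuinely different route from the paper's. The paper's proof is purely formal: it writes $D$ as the left Kan extension $\Lan{F}{G}$, where $F:\mathbb I\to\Set$ picks out a one-element set and $G:\mathbb I\to\Vcat$ picks out the unit $\V$-category $\One$; density of $G$ is \cite[(5.17)]{kelly:book} (this is exactly your observation that $\Vcat(D1,-)\cong\Id$), and density of $D$ then follows from \cite[Proposition~5.10]{kelly:book}, i.e.\ from the fact that a left Kan extension of a dense functor along a fully faithful functor is again dense. You instead verify the nerve characterisation \cite[Theorem~5.1]{kelly:book} by hand: since $\Vcat(DS,\X)$ is the set of functions $S\to X_\extra$ with the pointwise-infimum $\V$-metric, naturality along the points $1\to S$ collapses every component of a $\Vcat$-natural $\theta:\widetilde D\X\to\widetilde D\Y$ to post-composition with $\theta_1$, and the end giving the presheaf hom is attained at its $S=1$ term. (Your use of only ordinary naturality is harmless: you need just the trivial direction, that an enriched natural family is in particular ordinarily natural, and $\V$-functors, hence components, are determined by their object assignments.) Both proofs work for arbitrary, not necessarily integral, commutative $\V$, and both ultimately rest on density of $\One$ in $\Vcat$. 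What the paper's route buys is brevity and the conceptual point, stated just before the proposition, that density holds for general reasons independent of the $\V$-nerve presentation; what your route buys is a self-contained elementary argument which, as a bonus, makes explicit why the hom-objects $[\Set^\op,\Vcat](\widetilde D\X,\widetilde D\Y)$ exist at all even though $\Set$ is large (the defining ends collapse at $S=1$), a size point that the citation-based proof leaves implicit. One correction to your closing remark: the transfer from $\One$ to $D$ \emph{is} an instance of a formal principle --- not ``a composite $Dj$ dense implies $D$ dense'', but Kelly's Proposition~5.10; its extra hypothesis, namely that $D\cong\Lan{F}{G}$ with $F$ fully faithful, is precisely the abstract counterpart of your pointwise-naturality computation.
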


\begin{proof} 
Let $\mathbb I$ denote the unit $\Vcat$-category (which has only one object $*$ and $\Vcat$-hom $\mathbb I (*,*) = \mathbb 1$). 
Consider the $\Vcat$-functors $F:\mathbb I \to \Set, \quad F(*)=\underline{1}$, where $\underline{1}$ denotes a one-element set, and $G:\mathbb I \to \Vcat, \quad G(*)=\mathbb 1$. Then the usual coend formula gives $\mathsf{Lan}_F G = D$, using that $\mathbb I$ is small and $\Vcat$ is cocomplete. Now by~\cite[(5.17)]{kelly:book}, $G$ is dense, and by~\cite[Proposition~5.10]{kelly:book}, $D$ is also dense, given that $F$ is fully faithful. 
\end{proof}

\medskip\noindent
The result on density presentations in \cite[Chapter 5]{kelly:book} requires Kan extensions along fully faithful functors. We will henceforth demand in the remaining of this Section that the quantale $\V$ is integral, that is, $e=\top$, see Proposition~\ref{prop:D-ff}.

\medskip\noindent
Recall from Proposition~\ref{prop:Vcoinserter} that every $\V$-category $\X$ is the colimit $N\ast DF_\X$ of the diagram $(N,DF_\X)$ of discrete $\V$-categories.

\begin{thm}\label{thm:dense-pres}
The $\Vcat$-functor $D :\Set\to\Vcat$ is dense and the diagrams $(N,DF_\X)$ form a density presentation of $D:\Set\to\Vcat$. %
\end{thm}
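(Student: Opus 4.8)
The plan is to verify the two conditions that, following \cite[Section~5.4]{kelly:book}, make the family $(N, DF_\X)$ into a density presentation of the functor $D$, which is already known to be dense by Corollary~\ref{cor:dense}. The first condition --- that every object of $\Vcat$ is presented as a colimit of a diagram landing in the image of $D$ --- is exactly the content of Proposition~\ref{prop:Vcoinserter}, which exhibits each $\X$ as $\Colim{N}{(DF_\X)}$ with $DF_\X$ valued in discrete $\V$-categories. Since we now assume $\V$ integral, $D$ is moreover fully faithful by Proposition~\ref{prop:D-ff}, so the nerve $\tilde D = \Vcat(D-,-)\colon \Vcat \to [\Set^\op, \Vcat]$ is fully faithful. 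The only genuine content, then, is the second condition: that $\tilde D$ preserves each of the colimits $\X \cong \Colim{N}{(DF_\X)}$.

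To check this I would first reduce to a pointwise statement. Weighted colimits in the presheaf $\Vcat$-category $[\Set^\op,\Vcat]$ are computed objectwise, and $\tilde D(\X)$ is the presheaf $S \mapsto \Vcat(DS,\X)$; so preservation amounts to showing, for every set $S$, that the canonical comparison $\Colim{N}{(\Vcat(DS, DF_\X))} \to \Vcat(DS,\X)$ is invertible in $\Vcat$. The crux is to identify the diagram $\Vcat(DS, DF_\X)\colon \NN \to \Vcat$. Using that $D$ is fully faithful and that $DS$ is discrete, one gets $\Vcat(DS, DF_\X(r)) = \Vcat(DS, DX_r) \cong D(\Set(S, X_r))$ at each $r$, and similarly at $\extra$. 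Writing $\X^S := \Vcat(DS,\X)$, whose objects are the functions $S \to X_\extra$ with $\X^S(f,g) = \bigwedge_{s} \X(fs, gs)$, a direct inspection of level sets gives a bijection $\Set(S, X_r) \cong (\X^S)_r$, natural in the two projections $\delta^r_0, \delta^r_1$. Hence $\Vcat(DS, DF_\X) \cong DF_{\X^S}$ as diagrams $\NN \to \Vcat$.

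It then remains only to apply Proposition~\ref{prop:Vcoinserter} a second time, now to the $\V$-category $\X^S$: it yields $\Colim{N}{(DF_{\X^S})} \cong \X^S = \Vcat(DS,\X)$, and one checks that the resulting isomorphism is precisely the canonical comparison map. This establishes the pointwise preservation for every $S$, hence that $\tilde D$ preserves the colimits $\Colim{N}{(DF_\X)}$, and so completes the verification that $(N, DF_\X)$ is a density presentation. The main obstacle is the middle step --- the level-set identification $\Set(S, X_r) \cong (\X^S)_r$ together with the recognition that $\Vcat(DS, DF_\X) \cong DF_{\X^S}$ --- and it is here that integrality is indispensable: it is what forces $\Vcat(DS, DX_r)$ to be the \emph{discrete} $\V$-category $D(\Set(S,X_r))$, so that the diagram $\Vcat(DS, DF_\X)$ is again of the form $DF_{(-)}$ and Proposition~\ref{prop:Vcoinserter} can be reused.
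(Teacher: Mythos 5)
Your proof is correct and takes essentially the same route as the paper's: both reduce the density-presentation condition of \cite[Theorem~5.19]{kelly:book} to showing that each functor $\Vcat(DS,-)$ preserves the colimits $N \ast DF_\X$, and both establish this via the identification $\Vcat(DS, DF_\X) \cong DF_{\X^{DS}}$ followed by a second application of Proposition~\ref{prop:Vcoinserter} to $\X^{DS}$. The only cosmetic difference is that you spell out the pointwise-colimit reduction and the level-set bijection $\Set(S,X_r)\cong(\X^{DS})_r$ explicitly (noting correctly that integrality is what makes the hom $\V$-categories out of $DS$ discrete in all cases, including $S=\emptyset$), which the paper leaves implicit.
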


\begin{proof}
We know that $D$ is dense from Proposition~\ref{prop:D-dense}. Using Proposition~\ref{prop:Vcoinserter}, we see that all $\V$-coinserters $N\ast DF_\X$ exist in $\Vcat$ and the category
$\Vcat$ is the closure of $\Set$ under these colimits. %
By~\cite[Theorem~5.19]{kelly:book}, in order to establish the density presentation of $D$, it remains to show that the colimits 
$N\ast DF_\X$ are preserved by the functors 
$\Vcat(DS,-):\Vcat\to \Vcat$ for all sets $S$. 
Abbreviate as earlier $\Vcat(DS,\X)=\X^{DS}$. Going back to the notation of the proof of Proposition~\ref{prop:Vcoinserter}, we notice that $(DX_r)^{DS}=(\X^{DS})_r$ and $(DX_\extra)^{DS}=(\X^{DS})_\extra$, so that the diagram $(DF_\X)^{DS}$ 
\[
\xymatrix{
(DX_r)^{DS} \ar@<0.5ex>[rr]^{(Dd_1^r)^{DS}} \ar@<-0.5ex>[rr]_{(Dd_0^r)^{DS}} 
&& 
(DX_\extra)^{DS}}
\] 
is equally the diagram $DF_{\X^{DS}}$
\[
\xymatrix{
(\X^{DS})r \ar@<0.5ex>[rr]^{} \ar@<-0.5ex>[rr]_{} 
&& 
(\X^{DS})_\extra}.
\] 
This gives us, using Proposition~\ref{prop:Vcoinserter}, that
\begin{align*}
\Vcat(DS,N\ast DF_\X) & \cong \X^{DS} \\
& \cong N\ast DF_{\X^{DS}}\\
& \cong N\ast (DF_\X)^{DS}\\
& \cong N\ast \Vcat(DS,DF_\X)
\end{align*}
showing that $\Vcat(DS,-)$ preserves $N\ast DF_\X$.
\end{proof}

\noindent
We next characterise the $\Vcat$-functors with presentations in discrete arities, that is, the functors that are left Kan extensions of their restrictions along $D:\Set\to\Vcat$, as those functors that preserve colimits of $\V$-nerves.

\begin{thm}[Characterisation of $\Vcat$-functors with presentations in discrete arities]\label{thm:char-discrete-arities}
For $G:\Vcat\to\Vcat$ the following are equivalent:
\begin{enumerate}
\item
There exists a functor $H:\Set\to\Vcat$ such that
$G\cong \Lan{D}{H}$.
\item
$G$ preserves the colimits of
all diagrams $(N,DF_\X)$.
\end{enumerate}
\end{thm}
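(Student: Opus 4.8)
The plan is to read the equivalence off the density presentation of $D$ provided by Theorem~\ref{thm:dense-pres}. Since we are assuming $\V$ integral, $D$ is fully faithful by Proposition~\ref{prop:D-ff}, and the diagrams $(N,DF_\X)$ form a density presentation of $D$; the statement is then an instance of the general correspondence, for a dense functor, between left Kan extensions of restrictions and preservation of the presenting colimits~\cite[Section~5.4]{kelly:book}. I would nonetheless carry out both implications explicitly, since the two key ingredients are already available: Proposition~\ref{prop:Vcoinserter}, which exhibits $\X$ as the colimit $N\ast DF_\X$ with couniversal cocone $q_\X$, and Theorem~\ref{thm:Lan}, which computes $(\Lan{D}{H})\X$ as $N\ast HF_\X$.

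For $(1)\Rightarrow(2)$, I would assume $G\cong\Lan{D}{H}$. Because $D$ is fully faithful, the unit $H\to(\Lan{D}{H})D=GD$ is an isomorphism by Remark~\ref{rem:lan}, so $H\cong GD$. The diagram $DF_\X$ factors through $D$, hence $G\circ DF_\X=(GD)\circ F_\X\cong HF_\X$; applying Theorem~\ref{thm:Lan} to $GD$ gives $N\ast G(DF_\X)\cong N\ast HF_\X=(\Lan{D}{H})\X\cong G\X$. It then remains to check that this isomorphism is precisely the canonical comparison morphism $N\ast G(DF_\X)\to G(N\ast DF_\X)=G\X$ induced by $G$ acting on the couniversal cocone $q_\X$; granting this, $G$ preserves the colimit of $(N,DF_\X)$.

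For $(2)\Rightarrow(1)$, I would set $H=GD$. Preservation of the colimit $\X=N\ast DF_\X$ yields, for each $\X$, an isomorphism $G\X\cong N\ast G(DF_\X)=N\ast(GD)F_\X=N\ast HF_\X$, which by Theorem~\ref{thm:Lan} is exactly $(\Lan{D}{H})\X$. The main obstacle is to promote this family of object-wise isomorphisms to a single $\Vcat$-natural isomorphism $G\cong\Lan{D}{(GD)}$ and to identify it with the canonical comparison $\Lan{D}{(GD)}\to G$ coming from the universal property of the Kan extension. Here I would use that the couniversal cocones $q_\X$ are natural in $\X$ and that the colimiting cocones $c_\X$ of Corollary~\ref{cor:Lan by zigzag paths} are the identity on objects, so the comparison maps assemble coherently. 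Since Theorem~\ref{thm:dense-pres} has already certified $(N,DF_\X)$ as a density presentation, this coherence is exactly what the general theory of density presentations~\cite[Theorem~5.19]{kelly:book} guarantees, and I would invoke it to conclude rather than re-derive the $2$-categorical naturality by hand.
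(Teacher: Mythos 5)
Your proposal is correct and takes essentially the same approach as the paper, whose entire proof consists of combining the density presentation from Theorem~\ref{thm:dense-pres} with the equivalence of items~(i) and~(iii) of \cite[Theorem~5.29]{kelly:book}. The comparison-map and naturality issues you flag and then defer to the general theory are precisely what that result of Kelly settles (the relevant statement is Theorem~5.29 rather than Theorem~5.19, but it is the same body of theory in \cite[Section~5.4]{kelly:book}).
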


\begin{proof}
Since the $(N,DF_\X)$ form a density presentation (Theorem~\ref{thm:dense-pres}), this is the equivalence of items~(i) and~(iii) of \cite[Theorem~5.29]{kelly:book}.
\end{proof}

\noindent $\Vcat$-ifications are similarly characterised but additionally preserve discreteness.

\begin{thm}[Characterisation of $\Vcat$-ifications]\label{thm:char-Vcatification}
For $G:\Vcat\to\Vcat$ the following are equivalent.
\begin{enumerate}
\item
There exists a functor $T:\Set\to\Set$ such that
$G\cong \Lan{D}{(DT)}$, that is, $G$ is \hspace{.2em}$T_\V$, the $\Vcat$-ification of $T$.
\item
$G$ preserves the colimits of
all diagrams $(N,DF_\X)$, and $G$ preserves discrete $\V$-categories.
\end{enumerate}
\end{thm}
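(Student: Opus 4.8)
The plan is to build directly on the characterisation of functors with presentations in discrete arities (Theorem~\ref{thm:char-discrete-arities}), since condition~(2) of the present statement differs from that theorem's colimit-preservation condition only by the extra demand that $G$ preserve discreteness. The guiding idea is that, under the standing assumption that $\V$ is integral (so that $D$ is fully faithful by Proposition~\ref{prop:D-ff}), the discrete $\V$-categories form a full subcategory of $\Vcat$ equivalent to $\Set$ via $D$; consequently a functor $H:\Set\to\Vcat$ factors as $DT$ for an ordinary $T:\Set\to\Set$ exactly when it takes values in discrete $\V$-categories. The whole task is therefore to trade ``$G$ preserves discreteness'' for ``the associated $H$ has the form $DT$''.

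For $(1)\Rightarrow(2)$, assume $G\cong\Lan{D}{(DT)}$. Taking $H=DT$ in Theorem~\ref{thm:char-discrete-arities} immediately yields that $G$ preserves the colimits of all diagrams $(N,DF_\X)$. For discreteness, let $DS$ be an arbitrary discrete $\V$-category (every discrete $\V$-category arises this way, with $S$ its set of objects). Since $D$ is fully faithful, the unit of the left Kan extension is an isomorphism (Remark~\ref{rem:lan}), so $G(DS)\cong(\Lan{D}{(DT)})(DS)\cong D(TS)$, which is again discrete. Hence $G$ preserves discrete $\V$-categories.

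For $(2)\Rightarrow(1)$, assume~(2). By preservation of all $(N,DF_\X)$-colimits and Theorem~\ref{thm:char-discrete-arities} (resting on the density presentation of Theorem~\ref{thm:dense-pres}), we obtain $G\cong\Lan{D}{H}$ for some $H:\Set\to\Vcat$. Because $D$ is fully faithful, the unit $H\to(\Lan{D}{H})D\cong GD$ is an isomorphism (Remark~\ref{rem:lan}), so $HS\cong G(DS)$ for every set $S$; as $G$ preserves discreteness, each $HS$ is then discrete. I would then define $T:\Set\to\Set$ on objects by letting $TS=V(HS)$ be the object set of $HS$, and on morphisms by the object-assignment of the $\V$-functor $Hf$ --- a well-defined \emph{ordinary} functor, namely $V H_o$, since a $\V$-functor between discrete $\V$-categories is exactly an unconstrained map of object sets. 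Using $VD=\Id$ together with the discreteness of each $HS$, one gets $D(TS)=HS$, and the same on morphisms, whence $H\cong DT$. Therefore $G\cong\Lan{D}{H}\cong\Lan{D}{(DT)}=T_\V$, as required.

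The main obstacle I anticipate is not a single hard estimate but the careful bookkeeping in $(2)\Rightarrow(1)$: one must check that $H$ genuinely lands in the \emph{full} subcategory of discrete $\V$-categories, so that the factorisation through $D$ exists and is canonical, and that the resulting object-level assignment $T=V H_o$ is functorial as an \emph{ordinary} $\Set$-functor even though $V$ is not itself a $\Vcat$-functor. Both points hinge on integrality of $\V$: it is what makes $D$ fully faithful, forces the unit of the Kan extension to be an isomorphism on discrete inputs, and supplies the equivalence between sets and discrete $\V$-categories that drives the whole factorisation.
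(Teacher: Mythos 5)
Your proposal is correct and follows essentially the same route as the paper: both directions rest on Theorem~\ref{thm:char-discrete-arities}, together with the observation that $G$ preserving discreteness amounts to its restriction $GD$ factoring through $D$ as $DT$ (your use of the unit isomorphism, available since $\V$ is integral in this section, to identify $H$ with $GD$ is exactly the bookkeeping the paper leaves implicit).
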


\begin{proof}
We use Theorem~\ref{thm:char-discrete-arities} and observe that $G$ mapping discrete $\V$-categories to discrete $\V$-categories is equivalent to $GD$ factoring as $DT$ for some $T$.
\end{proof}

\begin{exa}
\label{ex:non-Vcatif}
We will now give an example of a $\Vcat$-functor $G:\Vcat \to \Vcat$ which is not a $\Vcat$-ification, even though it preserves discreteness of $\V$-categories. $G$ is presented in non-discrete arities as follows: For a $\V$-category $\X$, put $G\X = \X^{\Two_r}$, where the $\V$-category $\Two_r$ was introduced in~\eqref{eq:Two_r}, for $r$ an element of the quantale. 
Notice that $G$ is the identity on discrete $\V$-categories, for every $\bot<r\leq e$. 
Hence, if $G$ was $\Lan{D}(DT)$ for some $T:\Set \to \Set$, we could choose $T$ to be the identity and then $G$ would also have to be the identity, by Theorem~\ref{thm:dense-pres} and~\cite[Theorem~5.1]{kelly:book}.
\end{exa}


\subsection{Extending polynomial and finitary functors}\label{sec:examples}

\

\bigskip\noindent
Let now $\V$ be an arbitrary commutative quantale. We show that functors defined by finite powers and colimits extend from $\Set$ to $\Vcat$ via their  universal properties. In particular, the $\Vcat$-ification of a finitary functor is presented by the same operations and equations as its underlying $\Set$-functor.

\begin{exas}[\bf The $\Vcat$-ification of polynomial functors]
\label{exs:polynomial_functors}
\hfill
\begin{enumerate}
\item \label{ex:Vcatif-ct}
Let $T:\Set\to\Set$, be a constant functor at a set $S$. Then $T_\V$ acts as follows: For any $\V$-category $\X$, $T_\V\X$ is the constant $\Vcat$-functor to the $\V$-category with $S$ as set of objects, with $\V$-distances
\[
T_\V\X(x',x) = 
\begin{cases} 
\, \top \ \ , \ \ x'=x
\\
\bot \ \ , \ \ \mbox{otherwise}
\end{cases}
\]
That is, $T_\V \X = S \cdot \One_\top$ is the coproduct in $\Vcat$ of $S$ copies of the terminal $\V$-category $\One_\top$.  
Observe that in case $\V$ is integral, we obtain $T_\V \X=D S$ for any $\V$-category $\X$.

\item
Let $T:\Set\to\Set$ be the functor $TX= X^n$, for $n$ a natural number. 
Then $T_\V$ maps a $\V$-category $\X$ to its $n$-th power $\X^n$, where an easy computation shows that
\begin{equation}\label{eq:V-dist-tuples}
\X^n((x'_0,\dots,x'_{n-1}),(x_0,\dots,x_{n-1}))
=
\X(x'_0,x_0)\wedge\dots\wedge\X(x'_{n-1},x_{n-1}).
\end{equation}

\item \label{ex:Vcatif-power}
If $n$ is an {\em arbitrary\/} cardinal number, the $\Vcat$-ification $T_\V$ of 
$T:\Set\to \Set$, $TX=X^n$ also exists and $T_\V\X((x'_i),(x_i))=\bigwedge_i \X (x'_i,x_i)$. That is, $T_\V\X=\X^n$.
\item
The $\Vcat$-ification of a finitary polynomial functor
$
X\mapsto \coprod_n X^n\Sum \Sigma n
$
is the ``strongly polynomial'' $\Vcat$-functor
$
\X\mapsto \coprod_n \X^n \tensor D\Sigma n
$, where $n$ ranges through finite sets.
In particular, the $\Vcat$-ification of the list functor $LX = X^* = \coprod_n X^n$ maps a $\V$-category $\X$ to the $\V$-category having as objects tuples of objects of $\X$, with non-trivial $\V$-distances only between tuples of same order, given by~\eqref{eq:V-dist-tuples}. 
\end{enumerate}
\end{exas}

\smallskip\noindent %
Our next aim is to show that quotients of polynomial functors can be $\Vcat$-ified  by taking the ``same'' quotients in $\Vcat$.

\begin{rem}
\label{rem:VV}
The $\Vcat$-functor $D:\Set\to\Vcat$ preserves conical 
colimits. 
This follows from the $D _o$ being an \emph{ordinary} left adjoint. 
However, the $\Vcat$-functor $D:\Set\to\Vcat$ is not
a left \emph{$\Vcat$-adjoint}, as its ordinary right adjoint functor $V$ cannot be extended to a $\Vcat$-functor.
\end{rem}

\begin{prop}
\label{prop:Vcatification=functor}
The assignment $({-})_\V:[\Set,\Set]\to [\Vcat,\Vcat]$, $T\mapsto T_\V$ of the $\Vcat$-ification
preserves all colimits preserved by $D:\Set\to\Vcat$. %
In particular, $T\mapsto T_\V$ preserves conical colimits.
\end{prop}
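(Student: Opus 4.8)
The plan is to exhibit $({-})_\V$ as a composite of two functors and to track colimit preservation through each. By definition $T_\V=\Lan{D}{(DT)}$, so $({-})_\V$ factors as
\[
[\Set,\Set] \xrightarrow{\,D\circ(-)\,} [\Set,\Vcat] \xrightarrow{\,\Lan{D}{(-)}\,} [\Vcat,\Vcat],
\]
the first arrow being postcomposition with $D$ and the second being left Kan extension along $D$, which is defined on all of $[\Set,\Vcat]$ by Theorem~\ref{thm:Lan}.

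The second factor preserves \emph{all} colimits. Indeed, the second part of the proof of Theorem~\ref{thm:Lan} provides, for every $\Vcat$-functor $K:\Vcat\to\Vcat$, a natural bijection between $\Vcat$-natural transformations $\Lan{D}{H}\to K$ and $H\to KD$; this exhibits $\Lan{D}{(-)}$ as left adjoint to the precomposition functor $(-)\circ D:[\Vcat,\Vcat]\to[\Set,\Vcat]$. As a left adjoint it is cocontinuous, so it carries any colimit in $[\Set,\Vcat]$ to the corresponding colimit in $[\Vcat,\Vcat]$.

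For the first factor I would use that colimits in the functor categories $[\Set,\Set]$ and $[\Set,\Vcat]$ are computed objectwise. Thus postcomposition with $D$ acts on a colimit $\Colim{W}{S}$ by applying $D$ objectwise, and it sends $\Colim{W}{S}$ to a colimit of the postcomposed diagram precisely when, for each object $X$, the functor $D$ preserves the objectwise colimit $(\Colim{W}{S})X=\Colim{W}{(\mathrm{ev}_X\circ S)}$ in $\Vcat$. Hence $D\circ(-)$ preserves exactly those (classes of) colimits that are preserved by $D:\Set\to\Vcat$. Composing with the cocontinuous second factor, $({-})_\V$ preserves every colimit preserved by $D$, which is the assertion. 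The ``in particular'' clause is then immediate: by Remark~\ref{rem:VV} the functor $D$ preserves conical colimits, because $D_o$ is an ordinary left adjoint, so $({-})_\V$ does too.

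The only step that needs genuine care is the objectwise analysis of the first factor --- namely justifying that postcomposition with $D$ reduces to applying $D$ objectwise and that the objectwise colimits at stake are exactly the ones whose preservation by $D$ is being assumed. Everything else is formal: the adjointness (hence cocontinuity) of $\Lan{D}{(-)}$ is already contained in Theorem~\ref{thm:Lan}, and the reduction of colimits in functor categories to objectwise colimits is standard.
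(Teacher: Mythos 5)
Your proof is correct, but it follows a genuinely different route from the paper's. The paper works pointwise with the explicit formula $T_\V\X=\Colim{N}{(DTF_\X)}$ from Theorem~\ref{thm:Lan}: it notes that a natural transformation $\tau:T\to S$ induces $(\tau_\V)_\X=\Colim{N}{(D\tau F_\X)}$, invokes the symmetry $\Colim{N}{(DTF_\X)}\cong\Colim{(DTF_\X)}{N}$ of~\cite[Section~3.1]{kelly:book} so that the nerve becomes the diagram and $DTF_\X$ the weight, and then uses that weighted colimits are cocontinuous in the weight: if the colimit defining $T$ is preserved by $D$, then $DTF_\X$ is the corresponding colimit of weights, and the conclusion follows pointwise. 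You instead factor $({-})_\V$ globally as $\Lan{D}{(-)}\circ(D\circ(-))$, observe that postcomposition with $D$ preserves exactly those colimits preserved by $D$ (colimits in the functor categories being computed objectwise), and dispose of the second factor once and for all by exhibiting it as left adjoint to precomposition $(-)\circ D$. This is attractive because it isolates the dependence on $D$ entirely in the first factor and never uses the concrete construction of $\Lan{D}{H}$, only its existence and universal property; the paper's argument stays close to the computational formula, in the spirit of the rest of the section, and treats enriched weighted colimits directly. The one step you should tighten: the correspondence extracted from the proof of Theorem~\ref{thm:Lan} is a priori a bijection of \emph{sets} of $\Vcat$-natural transformations, i.e., an ordinary adjunction, and an ordinary left adjoint is only guaranteed to preserve ordinary colimits; since the proposition concerns $\Vcat$-enriched colimits (conical ones in the enriched sense, and more generally those weighted colimits preserved by $D$), you need the $\Vcat$-enriched form of the adjunction $\Lan{D}{(-)}\dashv(-)\circ D$. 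This enrichment is available because the codomain is $\Vcat$ itself (see Remark~\ref{rem:lan} and the discussion after Theorem~4.43 in~\cite{kelly:book}), but it is the one place where ``everything else is formal'' needs a citation rather than the bare bijection.
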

\begin{proof}
Any natural transformation $\tau:T\to S$ induces a $\Vcat$-natural transformation
$$
(\tau_\V)_\X
=
\Colim{N}{(D\tau F\!_\X)}
:
\Colim{N}{(D T F\!_\X)}
\to
\Colim{N}{(D S F\!_\X)}
$$ 
Since any colimit is cocontinuous in its weight and since 
$$
\Colim{N}{(D T F\!_\X)}
\cong
\Colim{(D T F\!_\X)}{N}
$$
holds~\cite[Section 3.1]{kelly:book}, the assignment $T\mapsto T_\V$ preserves all colimits that are preserved by $D:\Set\to\Vcat$. 
The last statement follows from Remark~\ref{rem:VV}.
\end{proof}

\begin{cor}
\label{cor:presentation}
Suppose that the coequalizer
$$
\xymatrix{
T_\Gamma
\ar@<.5ex>[0,1]^-{\lambda}
\ar@<-.5ex>[0,1]_-{\rho}
&
T_\Sigma
\ar[0,1]^-{\gamma}
&
T
}
$$
is the equational presentation of an accessible functor $T:\Set\to\Set$. 
Then the $\Vcat$-ification $T_\V$ can be obtained as the coequalizer
$$
\xymatrix{
(T_\Gamma)_\V
\ar@<.5ex>[0,1]^-{\lambda_\V}
\ar@<-.5ex>[0,1]_-{\rho_\V}
&
(T_\Sigma)_\V
\ar[0,1]^-{\gamma_\V}
&
T_\V
}
$$  
in $[\Vcat,\Vcat]$.
\end{cor}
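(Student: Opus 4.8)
The plan is to derive the corollary as a direct consequence of Proposition~\ref{prop:Vcatification=functor} together with the universal property of coequalizers in a functor category. The key observation is that a coequalizer diagram is itself a (conical) colimit, so if the operation $({-})_\V$ preserves the relevant colimit, it sends the presentation coequalizer in $[\Set,\Set]$ to a coequalizer in $[\Vcat,\Vcat]$. Thus the whole task reduces to checking that the hypotheses of Proposition~\ref{prop:Vcatification=functor} apply to \emph{this particular} coequalizer.

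\medskip\noindent
\textbf{Key steps.} First I would recall that the equational presentation
$$
\xymatrix{
T_\Gamma
\ar@<.5ex>[0,1]^-{\lambda}
\ar@<-.5ex>[0,1]_-{\rho}
&
T_\Sigma
\ar[0,1]^-{\gamma}
&
T
}
$$
exhibits $T$ as the coequalizer of $\lambda,\rho$ in the functor category $[\Set,\Set]$, and that this coequalizer is a conical colimit computed pointwise (since colimits in $[\Set,\Set]$ are computed objectwise, and reflexive coequalizers in $\Set$ are ordinary colimits). Next, by Proposition~\ref{prop:Vcatification=functor}, the functor $({-})_\V$ preserves every colimit that $D:\Set\to\Vcat$ preserves; and by Remark~\ref{rem:VV}, $D$ preserves all conical colimits because $D_o$ is an ordinary left adjoint. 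Hence $({-})_\V$ preserves this conical colimit, so applying it to the presentation yields the coequalizer
$$
\xymatrix{
(T_\Gamma)_\V
\ar@<.5ex>[0,1]^-{\lambda_\V}
\ar@<-.5ex>[0,1]_-{\rho_\V}
&
(T_\Sigma)_\V
\ar[0,1]^-{\gamma_\V}
&
T_\V
}
$$
in $[\Vcat,\Vcat]$. The induced morphisms $\lambda_\V,\rho_\V,\gamma_\V$ are precisely the $\Vcat$-ifications of the natural transformations $\lambda,\rho,\gamma$, computed via the formula $(\tau_\V)_\X=\Colim{N}{(D\tau F\!_\X)}$ exhibited in the proof of Proposition~\ref{prop:Vcatification=functor}.

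\medskip\noindent
\textbf{The main obstacle.} The delicate point is to justify that the presentation coequalizer is \emph{conical} rather than some more exotic weighted colimit, since Proposition~\ref{prop:Vcatification=functor} only guarantees preservation of colimits that $D$ preserves, and Remark~\ref{rem:VV} guarantees this precisely for conical colimits. Here one uses that the coequalizer arising from an equational presentation of an accessible functor is a split (or at least reflexive) coequalizer of $\Set$-functors, which is a genuine conical colimit in $[\Set,\Set]$; its preservation by $({-})_\V$ therefore falls squarely under the last clause of Proposition~\ref{prop:Vcatification=functor}. Once this identification is in place, the result is immediate and requires no further computation: the $\Vcat$-ification functor simply transports the presentation coequalizer to the corresponding coequalizer of $\Vcat$-ifications.
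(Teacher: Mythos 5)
Your proposal is correct and follows essentially the same route as the paper, which states the corollary without further proof precisely because it is an immediate consequence of Proposition~\ref{prop:Vcatification=functor} together with Remark~\ref{rem:VV}: the presentation coequalizer is a conical colimit, $D$ preserves conical colimits, hence $({-})_\V$ transports the coequalizer from $[\Set,\Set]$ to $[\Vcat,\Vcat]$. The only superfluous step is your appeal to the coequalizer being split or reflexive: any coequalizer of a parallel pair is already a conical colimit, so no such additional property is needed for the hypotheses of the proposition to apply.
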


\begin{rem}[\bf The $\Vcat$-ification of finitary functors]
Corollary~\ref{cor:presentation} allows us to say that the $\Vcat$-ification $T_\V$ of a finitary functor $T$ presented by operations and equations is given by imposing the ``same'' operations and equations in $\Vcat$.%
\footnote{
This explains~\cite[Remark~6.5(1)]{ammu:finitary-functors}.
} %
This gives us an alternative way of computing the $\Vcat$-ification of a finitary (or accessible) functor $T$: 
First, one presents $T$ as the quotient of a polynomial functor. 
Then one extends the polynomial functor to $\Vcat$ as in Example~\ref{exs:polynomial_functors}. 
Finally, one computes the relevant coequalizers to take a quotient in $\Vcat$ (see~\cite[Proposition~2.11]{wolff:Vcat&Vgraph} on coequalizers in $\Vcat$).
\end{rem}

\begin{exa}
The finite powerset functor is a quotient of the list functor by the familiar equations stating that sets are invariant under the order and multiplicity of elements. It is shown in \cite{VelebilK11} that adding to these equations the requirement that lists are monotone, one obtains the finite convex powerset functor, which is indeed the posetification of the finite powerset functor \cite{lmcs:bkv}.
\end{exa}

\subsection{Extending functors via relation lifting}\label{sec:wp}

\ 

\bigskip\noindent
Extending set-functors to various (ordered) categories using the so-called relation lifting is an established topic of research going back to~\cite{barr} and \cite{trnkova}. Using the representation of quantale-enriched categories as relational presheaves of Section~\ref{sec:relpresh}, we will see here that this idea can also be applied to the extension of $\Set$-functors to $\Vcat$ and, moreover, that $\Vcat$-ifications of weak-pullback preserving functors are more easily computed in this way. 
In particular, \eqref{eq:wpb} simplifies \eqref{eq:shortest-path} significantly. As an application, we will compute the extension of the multiset functor and show that the well-known Pompeiu-Hausdorff metric arises as a $\Vcat$-ification of the powerset functor.

\medskip\noindent
As in ~\cite{wagner}, 
``what we do in effect is to reason about metric spaces (or other structures) in a universe where they {\em are} pre-orders, viz.\ in sheaves over $[0, \infty]$ (or other $\V$).'' 
This point of view is summarised by Theorem~\ref{thm:wpbcontrelpresh} stating mild conditions under which  the $\Vcat$-ification is obtained by applying the relation lifting to these preorders.

\begin{rem}[Relation lifting, see for example~\cite{barr,trnkova,freyd-scedrov:allegories,HermidaJ98,kv:rellift}] 
Let $\Rel$ be the 2-category of sets with relations ordered by inclusion as arrows and $(-)_\diamond:\Set\to\Rel$ the functor mapping a function to its graph. Given a functor $T:\Set\to\Set$ the relation lifting $\mathsf{Rel}_T:\Rel\to\Rel$ extends $T$ to a colax functor
$$
\xymatrix{
\Rel
\ar[0,2]^-{\mathsf{Rel}_T}
&
&
\Rel
\\
\Set
\ar[0,2]_-{T}
\ar[-1,0]^{(-)_\diamond}
&
&
\Set
\ar[-1,0]_{(-)_\diamond}
}
$$
and is defined as follows. Let $p:Z\to X$ and $q:Z\to Y$ two arrows (called a span from $X$ to $Y$). A \emph{span $(p,q)$ represents the relation}
\begin{equation}\label{eq:relofspan}
q_\diamond \cdot (p_\diamond)^{-1} = \{(x,y) \mid \mbox{ there is $z\in Z$ s.t. $x=p(z)$ and $q(z)=y$}\}
\end{equation}
 where $\cdot$ is composition and $^{-1}$ is relational inverse. 
 Composition of two relations is represented by the pullback of their spans. 
 Let $R$ be a relation represented by the span $(p,q)$. 
 Then \emph{$\mathsf{Rel}_T(R)$ is the relation represented by $(Tp,Tq)$}. 
 This definition is independent of the choice of span. 
 In the following we only need to know that $\mathsf{Rel}_T$ is monotone, preserves identities, and that it preserves function composition iff $T$ preserves weak pullbacks. \qed
\end{rem} 

\medskip\noindent The following lemma considerably simplifies the formula in Corollary~\ref{cor:Lan by zigzag paths} for computing the left-Kan extension $\Lan{D}{H}$ in case $H=DT$ with $T$ preserving weak pullbacks.
 
\begin{lem}\label{lem:wpb}
Let $T:\Set\to\Set$ be a weak pullback preserving functor. Then the $\Vcat$-ification of $T$ is given by
\begin{equation}\label{eq:wpb}
T_\V\X(A',A)
=
\bigvee_s \{ s\mid (A',A)\in \mathsf{Rel}_T(X_s) \}
\end{equation}
\end{lem}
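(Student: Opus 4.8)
The plan is to derive \eqref{eq:wpb} from the explicit ``shortest path'' formula \eqref{eq:shortest-path} of Corollary~\ref{cor:Lan by zigzag paths}, applied to $H = DT$, by exploiting both the discreteness of $H$ on objects and the weak-pullback hypothesis. First I would note that $HX_\extra = D(TX_\extra)$ is a \emph{discrete} $\V$-category, so each intra-pair distance $HX_\extra(A'_i,A_i)$ equals $e$ when $A'_i = A_i$ and $\bot$ otherwise. Since every map ${-}\tensor r$ is a left adjoint and hence preserves joins, $\bot\tensor r = \bot$, so any summand in \eqref{eq:shortest-path} with some $A'_i\neq A_i$ collapses to $\bot$ and can be discarded. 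Restricting to paths with $A'_i = A_i$ for all $i$ and writing $B_i$ for this common value, every surviving summand simplifies (the $e$'s being absorbed) to $r_1\tensor\cdots\tensor r_n$. Moreover the connecting data ``$C_i\in HX_{r_i}$ with $Hd^{r_i}_0(C_i)=B_{i-1}$, $Hd^{r_i}_1(C_i)=B_i$'' says exactly that $(B_{i-1},B_i)\in\mathsf{Rel}_T(X_{r_i})$, because on objects $Hd^{r_i}_j = Td^{r_i}_j$ and $\mathsf{Rel}_T(X_{r_i})$ is by definition the relation represented by the span $(Td^{r_i}_0, Td^{r_i}_1)$ of $T(X_{r_i})$.

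After this reduction the claim becomes the identity
$$
T_\V\X(A',A) = \bigvee\{\, r_1\tensor\cdots\tensor r_n \mid \exists\, A'=B_0,\dots,B_n=A \text{ with } (B_{i-1},B_i)\in\mathsf{Rel}_T(X_{r_i}) \,\},
$$
and I would prove it equals the right-hand side of \eqref{eq:wpb} by two inequalities. The inequality $\geq$ is immediate: every $s$ with $(A',A)\in\mathsf{Rel}_T(X_s)$ arises from the one-step path $n=1$, $r_1=s$, $B_0=A'$, $B_1=A$, so each such $s$ already appears among the path-weights; the degenerate empty path (which contributes the unit $e$ precisely when $A'=A$) is matched on the right by $(A,A)\in\mathsf{Rel}_T(X_e)$, which holds because $\Id\subseteq X_e$ by \eqref{eq:monoidal1} and $\mathsf{Rel}_T$ preserves identities and is monotone.

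For the reverse inequality $\leq$ I would fix a path and set $s := r_1\tensor\cdots\tensor r_n$, aiming to show $(A',A)\in\mathsf{Rel}_T(X_s)$ so that this path-weight is dominated by the right-hand supremum. Chaining the memberships $(B_{i-1},B_i)\in\mathsf{Rel}_T(X_{r_i})$ places $(A',A)$ in the relational composite $\mathsf{Rel}_T(X_{r_1})\cdots\mathsf{Rel}_T(X_{r_n})$. This is the one step that uses the hypothesis: since $T$ preserves weak pullbacks, $\mathsf{Rel}_T$ preserves relational composition, so the composite equals $\mathsf{Rel}_T(X_{r_1}\cdots X_{r_n})$. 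Iterating the monoidal-presheaf inclusion \eqref{eq:monoidal2}, $X_r\cdot X_{r'}\subseteq X_{r\tensor r'}$, together with commutativity of $\tensor$, gives $X_{r_1}\cdots X_{r_n}\subseteq X_{r_1\tensor\cdots\tensor r_n}=X_s$, and monotonicity of $\mathsf{Rel}_T$ then yields $(A',A)\in\mathsf{Rel}_T(X_s)$, as required.

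The main obstacle is exactly the middle manoeuvre of the last paragraph, namely turning a finite composite of relation liftings into a single relation lifting; everything else is bookkeeping about the discrete case and the order-theory of the quantale. This collapse is where weak-pullback preservation is indispensable, and I would invoke it in the precise form recorded in the relation-lifting remark preceding the lemma, that $\mathsf{Rel}_T$ preserves composition if and only if $T$ preserves weak pullbacks.
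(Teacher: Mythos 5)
Your proof is correct and follows essentially the same route as the paper's: both derive \eqref{eq:wpb} from the shortest-path formula of Corollary~\ref{cor:Lan by zigzag paths}, get the inequality $\geq$ from one-step paths, and get $\leq$ by reading a path as a relational composite, collapsing it to a single relation lifting via weak-pullback preservation, and then applying the lax-monoidality inclusion \eqref{eq:monoidal2}. If anything, your write-up is slightly more scrupulous than the paper's, since you make explicit the discreteness collapse ($\bot\tensor r=\bot$ kills paths with $A'_i\neq A_i$) and the degenerate empty path, both of which the paper leaves implicit.
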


\begin{proof}
To show ``$\ge$'' assume $(A',A)\in \mathsf{Rel}_T(X_s)$.  
Then there is some $C\in T(X_s)$ such that $Td_0^s(C)=A'$ and $Td_1^s(C)=A$. Since this is a (very special) zig-zag of the form of Corollary~\ref{cor:Lan by zigzag paths} it follows that $T_\V\X(A',A) \ge s$. 

\medskip\noindent
Conversely, for ``$\le$'', consider a zig-zag 
\begin{equation*}
(A',A_0, C_1\in DTX_{r_1}, A'_1, A_2, C_1\in DTX_{r_2},\ldots , A_n')
\end{equation*}
as in Corollary~\ref{cor:Lan by zigzag paths}, contributing with
$$r=r_1\otimes\ldots r_n$$ to the distance $T_\V\X(A',A)$. 
We want to see that 
$$(A',A)\in \mathsf{Rel}_T(X_r).$$ 
Note that $A'=A_0$ and  $A'_i=A_{i+1}$, $0<i<n$, and $A'_n=A$, so that we are looking at the zig-zag
\begin{equation}\label{eq:zigzag}
\vcenter{
\xymatrix@C=0pt@R=45pt{
&
DT X_{r_1}
\ar[dl]_{DTd^{r_1}_0}
\ar[dr]^{}
&
&
DTX_{r_2}
\ar[dl]_{}
\ar[dr]^{}
&
&
\quad \cdots \quad 
\ar[dl]_{\cdots}
\ar[dr]^{\cdots}
&
&
DTX_{r_n}
\ar[dl]_{}
\ar[dr]^{DTd^{r_n}_1}
&
&
\\
DTX
&
&
DTX
&
&
DTX
&
&
DTX
&
&
DTX
}}
\end{equation}
Consequently, $(A',A)$ is in the relation represented by the zig-zag~\eqref{eq:zigzag}, that is, in 
$$\mathsf{Rel}_T (X_{r_1})\cdot \mathsf{Rel}_T (X_{r_2})\cdot\ldots \cdot \mathsf{Rel}_T (X_{r_n})$$
We calculate
\begin{align*}
\mathsf{Rel}_T (X_{r_1})\cdot \mathsf{Rel}_T (X_{r_2})\cdot\ldots \cdot \mathsf{Rel}_T (X_{r_n}) 
& = 
\mathsf{Rel}_T(X_{r_1}\cdot X_{r_2}\cdot\ldots \cdot X_{r_n}) \\
& \subseteq 
\mathsf{Rel}_T(X_{r_1\otimes r_2\otimes\ldots \otimes r_n})\\
& =\mathsf{Rel}_T(X_r)
\end{align*}
where the first step is due to $T$ preserving weak pullbacks and the second step is due to $X_\_$ being lax monoidal, see \eqref{eq:monoidal2}.
\end{proof}

\begin{exa}[\bf The $\Vcat$-ification of the multiset functor]
The (finitary) multiset functor $M:\Set\to \Set$ can be presented as $MX = \coprod_{n}X^n/S_n$, where $S_n$ denotes the full $n$-permutation group~\cite{hasegawa}. 
Denote by $\widehat{[x_1, \ldots, x_n]}$ the equivalence class (multiset) of the tuple $(x_1, . . . , x_n)$  under the action of $S_n$. 

\noindent By Lemma~\ref{lem:wpb} and Equation~\eqref{eq:relofspan}, the $\Vcat$-ification $M_\V$ admits the following description: it maps a $\V$-category $\X$ to the $\V$-category having as objects multisets of objects of $\X$ -- formally, $(M_\V \X)_\extra = M X_\extra$, 
and the only possible non-trivial $\V$-distances are between multisets of same cardinal, namely
\begin{alignat*}{4}
M_\V \X(\widehat{[x'_1,\ldots,x'_{n}]},\widehat{[x_1, \ldots, x_{n}]}) 
&
= 
&&
\bigvee_s \{ s \mid \mbox{ there is } C \in MX_s \mbox{ s.t. } Md_0^s(C) = \widehat{[x'_1,\ldots,x'_{n}]} 
\\
&&&
\mbox{ and } Md_1^s(C) = \widehat{[x_1,\ldots,x_{n}]}\, \}
\\
&
=
&&
\bigvee_s 
\{
s \mid 
\mbox{ there is }\sigma\in S_n \mbox{ s.t. } s\leq \X(x'_i,x_{\sigma(i)}) 
\\
&&&
\mbox{ for all } i=1,n \,  \} 
\\
&
=
&&
\bigvee_{\sigma\in S_n} \bigvee_s \{s \mid s\leq \X(x'_i,x_{\sigma(i)}) \mbox{ for all } i=1,n \,  \}
\\
&
=
&& 
\bigvee_{\sigma\in S_n} \bigwedge_i \X(x'_i, x_{\sigma(i)}) 
\end{alignat*}

\noindent The reader might notice that for $\V=([0,\infty],\geq_{\mathbb R} +,0)$, we regain by the above formula the matching metric on multisets~\cite{dd:encycl-dist} employed in image recognition techniques. 
Also, it generalises~\cite{dahlqvist-kurz:calco17}, where the case $\V=\Two$ (that is, $\Vcat=\Pre$) was considered, in the sense that given a (pre)ordered set $(X,\leq)$, multisets on $X$ are ordered by
\[
\widehat{[x'_1,\ldots,x'_{n}]} \leq \widehat{[x_1, \ldots, x_{n}]} \Longleftrightarrow \mbox{ there is } \sigma\in S_n \mbox{ s.t. } x'_i \leq x_{\sigma(i)} \mbox{ for all } i=1,n
\]
In fact, \cite{dahlqvist-kurz:calco17} corroborated with Lemma~\ref{lem:wpb} and \eqref{eq:relofspan} gives us the recipe to compute the $\Vcat$-ification of any analytic $\Set$-functor,%
\footnote{
Analytic functors $\Set \to \Set$ preserve weak pullbacks~\cite[Lemma~1.8]{hasegawa}.
} %
not just of the multiset functor. 
\end{exa}

\begin{exa}[The $\Vcat$-ification of the powerset]\label{ex:Vcatif-powerset}
Let $\mathcal P:\Set\to\Set$ be the powerset functor. 
Using Lemma~\ref{lem:wpb} and Equation~\eqref{eq:relofspan}, the $\Vcat$-ification $\mathcal P_\V$ is described as follows. 
Let $\X$ be any small $\V$-category.
Then the objects of $\mathcal P_\V\X$ are subsets of the set of objects of $\X$, while the $\V$-``distances'' in $\mathcal P_\V \X$ are computed as follows:
\begin{alignat*}{4}
\mathcal P_\V\X(A',A)
&
=
&&
\bigvee_s %
\{ %
s\mid \mbox{ there is $C$ in $\mathcal PX_s$ s.t. $\mathcal Pd_0^s(C)=A'$ and $\mathcal Pd_1^s(C)=A$} %
\} 
\\
&
=
&&
\bigvee_s \{ s \mid 
\forall \, x'\in A' \, \exists \, x\in A . \, s \leq \X(x',x)
\mbox{ and } 
\forall \, x\in A \, \exists\, x'\in A' . \, s \leq \X(x',x)
\}
\end{alignat*}
Remembering that the join in $\V$ denotes the infimum over all distances, we recognise the familiar Pompeiu-Hausdorff metric as seen in~\cite[Definition~2.2]{debakker-devink}, now generalised to categories enriched over an arbitrary commutative quantales $\V$. We will discuss the relationship to the more customary formulation in the following.
\qed
\end{exa}

\medskip\noindent Recall from~\cite{fawcett-wood:ccd,raney} that a quantale (more generally, a poset) $\V$ is called {\em completely distributive} if the function from the poset of downsets of $\V$ to $\V$, $\frak D \mapsto \bigvee \frak{D}$, has a left adjoint. This left adjoint maps each $r\in \V$ to the downset $\{s \mid s\ll r\}$ of all elements totally-below it, where the ``totally-below" relation is  
$$
s\ll r \quad \Longleftrightarrow \quad (\,  \forall \, \frak{D}\subseteq \V \mbox{ downset such that} \, \, r \leq \bigvee \frak{D}  \Rightarrow \, s \in \frak{D}\, )
$$
Examples of completely distributive quantales are $\V=[0,1]$ and $\V=[0,\infty]$. 
For such a quantale, we provide below another characterisation of the $\Vcat$-ification of the powerset functor:

\begin{prop}[The $\Vcat$-ification of the powerset, continued]\label{prop}
If the quantale $\V$ is completely distributive, then the $\Vcat$-ification of the powerset functor can be described as follows: 
for a $\V$-category $\X$, $\mathcal P_\V\X$ is the $\V$-category with objects $\mathcal PX_\extra$, and $\V$-homs
\begin{equation}\label{Vcat-powerset}
\mathcal P_\V \X(A', A) = \big(\bigwedge_{x'\in A'} \bigvee_{x\in A} \X(x',x) \big) \bigwedge \big(\bigwedge_{x\in A} \bigvee_{x'\in A'} \X(x',x)\big)
\end{equation}
\end{prop}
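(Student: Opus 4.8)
The plan is to show that the formula~\eqref{Vcat-powerset} coincides with the description of $\mathcal P_\V\X(A',A)$ already computed in Example~\ref{ex:Vcatif-powerset}. Write $R$ for the right-hand side of~\eqref{Vcat-powerset}, and let
$$
L = \bigvee\{s \mid \forall x'\in A'\,\exists x\in A.\, s\le \X(x',x) \ \mbox{and}\ \forall x\in A\,\exists x'\in A'.\, s\le \X(x',x)\}
$$
be the supremum produced in Example~\ref{ex:Vcatif-powerset}. I would prove $L=R$ by establishing the two inequalities separately, with complete distributivity entering only in one of them.

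For $L\le R$ I would argue directly and without any hypothesis on $\V$. If $s$ lies in the set defining $L$, then for each $x'\in A'$ there is some $x\in A$ with $s\le\X(x',x)$, so $s\le\bigvee_{x\in A}\X(x',x)$; as this holds for every $x'$ we get $s\le\bigwedge_{x'\in A'}\bigvee_{x\in A}\X(x',x)$, and symmetrically $s\le\bigwedge_{x\in A}\bigvee_{x'\in A'}\X(x',x)$. Hence $s\le R$, and taking the join over all admissible $s$ yields $L\le R$. This uses only monotonicity of joins and meets.

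For $R\le L$ I would invoke that $\V$ is completely distributive. From the defining adjunction $\frak D\mapsto\bigvee\frak D$ one first extracts two lattice-theoretic facts: the approximation identity $R=\bigvee\{t\mid t\ll R\}$, and the interpolation law that $t\ll\bigvee_{i\in I}a_i$ implies $t\le a_i$ for some $i$ (obtained by applying the adjunction to the downset $\{u\mid\exists i.\, u\le a_i\}$, whose join is $\bigvee_i a_i$). By the approximation identity it suffices to show that every $t\ll R$ satisfies the two clauses defining $L$, whence $t\le L$ and so $R\le L$. So fix $t\ll R$. For each $x'\in A'$ we have $R\le\bigwedge_{x'}\bigvee_x\X(x',x)\le\bigvee_{x\in A}\X(x',x)$; using the elementary monotonicity $t\ll a\le b\Rightarrow t\ll b$ of the totally-below relation, $t\ll\bigvee_{x\in A}\X(x',x)$, and interpolation furnishes $x\in A$ with $t\le\X(x',x)$. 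This is the first clause for $t$, and the second follows symmetrically from $R\le\bigvee_{x'\in A'}\X(x',x)$.

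The main obstacle is exactly the direction $R\le L$: recovering from a bound $t\le\bigvee_x\X(x',x)$ a single witnessing $x$ with $t\le\X(x',x)$ is false for general quantales, and it is precisely complete distributivity, through the approximation identity and the interpolation law, that makes it available. Once these two facts are cleanly deduced from the adjunction together with $t\ll a\le b\Rightarrow t\ll b$, the remainder of the argument for the powerset is a short verification, and the same template would plausibly handle other $\Vcat$-ifications whose hom-objects arise as joins of witnessed conditions.
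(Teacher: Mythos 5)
Your proposal is correct and takes essentially the same route as the paper's own proof: both reduce the claim to the join formula of Example~\ref{ex:Vcatif-powerset}, prove the inequality $L\le R$ directly with no hypotheses on $\V$, and derive $R\le L$ from complete distributivity via the totally-below relation, using monotonicity of $\ll$ together with a witness-extraction lemma for $t\ll\bigvee_{x\in A}\X(x',x)$ (your interpolation law is the paper's Lemma~\ref{lemma}, in the slightly weaker but sufficient form concluding $t\le\X(x',x)$ rather than $t\ll\X(x',x)$). The only cosmetic difference is that you route the argument through the approximation identity $R=\bigvee\{t\mid t\ll R\}$, whereas the paper invokes the adjunction directly as the equivalence $r\le\bigvee S\iff\{s\mid s\ll r\}\subseteq S$ for the downset $S$.
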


\noindent We will first need a lemma: 
\begin{lem}\label{lemma}
Let $r\in \V$ and $S\subseteq \V$. Then $r\ll \bigvee S$ if and only if there is some $s\in S$ with $r\ll s$. 
\end{lem}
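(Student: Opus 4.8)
The plan is to prove the two implications separately, extracting the nontrivial direction from complete distributivity. First I would record a monotonicity property of the totally-below relation that is immediate from its definition: if $r\ll s$ and $s\leq t$, then $r\ll t$; and if $r'\leq r\ll s$, then $r'\ll s$. Both follow by unwinding the definition. Given a downset $\frak D$ with $t\leq \bigvee\frak D$, the hypothesis $r\ll s$ together with $s\leq t\leq \bigvee\frak D$ forces $r\in\frak D$, establishing the first; and given $\frak D$ with $s\leq \bigvee\frak D$, we get $r\in\frak D$, whence $r'\in\frak D$ since $r'\leq r$ and $\frak D$ is downward closed, establishing the second.

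The ``if'' direction is then immediate: if some $s\in S$ satisfies $r\ll s$, then since $s\leq\bigvee S$ the monotonicity property yields $r\ll\bigvee S$.

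For the ``only if'' direction, the key input is the defining property of complete distributivity, namely that the left adjoint to $\frak D\mapsto\bigvee\frak D$ sends each $t\in\V$ to the downset $\{u\mid u\ll t\}$; the unit and counit of this adjunction combine with the fact that $u\ll t$ implies $u\leq t$ to give the identity $\bigvee\{u\mid u\ll t\}=t$ for every $t$. Now assume $r\ll\bigvee S$ and form the set $\frak D=\{u\mid u\ll s\ \mbox{for some}\ s\in S\}=\bigcup_{s\in S}\{u\mid u\ll s\}$. By the monotonicity property above this is a downset, and computing its join yields $\bigvee\frak D=\bigvee_{s\in S}\bigvee\{u\mid u\ll s\}=\bigvee_{s\in S}s=\bigvee S$. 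Since $r\ll\bigvee S=\bigvee\frak D$, the definition of the totally-below relation forces $r\in\frak D$, that is, $r\ll s$ for some $s\in S$, as required.

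The main obstacle is the ``only if'' direction, and within it the identity $\bigvee\{u\mid u\ll s\}=s$, which is precisely where complete distributivity of $\V$ is used; the remainder is formal manipulation of downsets together with the monotonicity property of $\ll$.
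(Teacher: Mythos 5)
Your proof is correct, but it takes a genuinely different route from the paper's. The paper argues the nontrivial direction by contraposition: assuming $r\not\ll s$ for every $s\in S$, it picks for each $s$ a witnessing downset $\frak{D}_s$ with $s\leq\bigvee\frak{D}_s$ and $r\notin\frak{D}_s$, and observes that $\bigcup_{s\in S}\frak{D}_s$ is a downset whose join dominates $\bigvee S$ yet does not contain $r$, contradicting $r\ll\bigvee S$. You instead argue directly, forming the canonical downset $\frak{D}=\bigcup_{s\in S}\{u\mid u\ll s\}$ and using the identity $\bigvee\{u\mid u\ll s\}=s$ --- which you correctly extract from the adjunction defining complete distributivity --- to get $\bigvee\frak{D}=\bigvee S$ and hence $r\in\frak{D}$. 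The notable difference lies in the hypotheses consumed: your argument genuinely needs complete distributivity (the identity $\bigvee\{u\mid u\ll s\}=s$ is essentially equivalent to it), whereas the paper's contrapositive argument uses only the definition of $\ll$ and therefore establishes the lemma in an arbitrary complete lattice. Since the lemma is only ever applied to completely distributive $\V$, both proofs are adequate; the paper's is marginally more general and needs no auxiliary monotonicity facts, while yours is direct and makes explicit exactly where the distributivity structure enters.
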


\begin{proof}[Proof of Lemma~\ref{lemma}]
The ``if'' implication is immediate. To see the other implication, assume that for all $s\in S$, $r\not\ll s$. That is, for each $s\in S$ there is a downset $\frak{D}_s$ with $s\leq \bigvee \frak{D}_s$ and $r \not\in \frak{D}_s$. But then $\bigcup_{s\in S} \frak{D}_s$ is again a downset and 
$$
\bigvee S \leq \bigvee \bigcup_{s\in S} \frak{D}_s \quad \mbox{ but } r \not \in \bigcup_{s\in S} \frak{D}_s 
$$
which contradicts $r \ll \bigvee S$. 
\end{proof}

\medskip\noindent We are now ready to prove Proposition~\ref{prop}.
\begin{proof}[Proof of Proposition~\ref{prop}]
Remember that in~Example~\ref{ex:Vcatif-powerset}, we have obtained that 
\begin{alignat*}{3}
\mathcal P_\V \X(A',A) 
&=&
\bigvee_s \{ s\mid 
\mbox{%
$\forall\, x'\in A' \ \exists\, x\in A.\, s\leq\X(x',x)$ 
and 
$\forall\, x\in A\ \exists\, x'\in A'.\, s\leq\X(x',x)$%
}
\}
\end{alignat*}
Denote for simplicity 
$$
S=\{ s\mid 
\mbox{
$\forall\, x'\in A' \ \exists\, x\in A.\, s\leq\X(x',x)$ 
and 
$\forall\, x\in A\ \exists\, x'\in A'.\, s\leq\X(x',x)$
}
\}
$$
and notice that $S$ is a downset; %
also put
$$
r=\big(\bigwedge_{x'\in A'} \bigvee_{x\in A} \X(x',x) \big) \bigwedge \big(\bigwedge_{x\in A} \bigvee_{x'\in A'} \X(x',x)\big)
$$
Thus we want to see that $r=\displaystyle{\bigvee} S$. The inequality ``$\geq$'' follows easily with no assumption on the quantale $\V$. For the other inequality, use complete distributivity to see that 
$$
r
\leq
\bigvee S
$$
is equivalent with 
$$
\{s \mid s\ll r\} \subseteq S
$$
Let now $s\ll r$. 
Then for each $x'\in A'$, it follows that $s \ll \bigvee_{x\in A} \X(x',x)$, thus by the above lemma we find some $x\in A$ with $s \ll \X(x',x)$, which implies $s \leq \X(x',x)$. Similarly, for all $x\in A$ there is $x'\in A'$ with $s \leq \X(x',x)$. 
Consequently, $s\in S$ and the proof is finished. 
\end{proof}

\begin{rem}
\begin{enumerate}
\item 
Let us switch notation to the dual order (that is, the natural ``less-or-equal'' order in case of the reals). 
So we write $\inf$ for $\bigvee$ and $\sup$ for $\bigwedge$, in order to emphasise the interpretation of $\Vcat$'s as metric spaces. 
Then formula~\eqref{Vcat-powerset} becomes 
\begin{equation}\label{eq:Pompeiu-Hausdorff}
\sup \{ \ \sup_{x'\in A'}\ \underset{x\in A}{\vphantom{p}\inf\vphantom{p}} \ \X(x',x) 
\ , \ 
\sup_{x\in A}\ \underset{x'\in A'}{\vphantom{p}\inf\vphantom{p}} \ \X(x',x)
\}
\end{equation}
As mentioned earlier, this metric given by Equation~\eqref{eq:Pompeiu-Hausdorff} above, is known as the Pompeiu-Hausdorff metric  (\cite[\textsection~28]{Hausdorff},~\cite[\textsection~21]{Pompeiu},~\cite[\textsection~21.VII]{kuratowski}). 
We should mention also the connection with the work of~\cite{akvhlediani+clementino+tholen:hausdorff}. 
Finally, observe that in case $\V=\Two$ (so $\Vcat=\Pre$), the above specialises to the locally monotone functor $\mathcal P_\Two:\Pre\to\Pre$ which sends a preorder $(X,\leq)$ to the Egli-Milner preorder 
$$
A'\sqsubseteq A
\quad
\mbox{iff}
\quad
\forall\, x'\in A' \ \exists\, x\in A.\, x'\leq x
\mbox{ and }
\forall\, x'\in A \ \exists\, x\in A'.\, x'\leq x
$$
on the powerset $\mathcal PX$.

\item Notice that the $\Vcat$-ification of the powerset functor obtained in Proposition~\ref{prop} for completely distributive quantale is the (symmetrised version of the) free cocompletion monad with respect to (the saturated class of) conical colimits in $\Vcat$~\cite{stubbe:hausdorff}. 
\end{enumerate}
\end{rem}

\medskip\noindent 
Example~\ref{ex:Vcatif-powerset} raises the question whether the relations $(T_\V\X)_r$ can be computed simply by applying the relation lifting $\mathsf{Rel}_T$ to  the $X_r$. 
The next example shows that we need to be careful with this.

\begin{exa}
Let $\X$ be the metric space of real numbers considered as a generalised metric space over $\V=[0, \infty]$. Let $A'$ be the subset of irrational reals and $A$ the subset of rational reals. 
Then $\mathcal P_\V\X(A',A)=0$ but $(A',A)\notin \mathsf{Rel}_\mathcal P(X_0)$. 
Indeed, $\mathsf{Rel}_\mathcal P(X_0)$ is the diagonal, since $X_0$ is the diagonal.
\end{exa}

\noindent
In terms of Section~\ref{sec:relpresh}, the example above shows that the relational presheaf
\begin{align*}
(\Sigma\V)^\coop & \to \ \Rel\\
r \ \ \ & \ \mapsto \ \mathsf{Rel}_\mathcal P(X_r)
\end{align*}
is not continuous as we have $(A',A)\in \mathsf{Rel}_\mathcal P(X_\varepsilon), \varepsilon >_{\mathbb R} 0$, but  $(A',A)\notin \mathsf{Rel}_\mathcal P(X_0)$. 
We can repair this defect by closing the presheaf as follows. 
Given a relational presheaf $F:(\Sigma\V)^\coop\to\Rel$, and assuming that $\V$ is completely distributive, define its closure as 
\begin{align*}
\mathcal CF: (\Sigma\V)^\coop & \to \ \Rel
\\
* \ \ \ & \ \mapsto F(*)
\\
r \ \ \ & \ \mapsto \ \bigcap\{F(s) \mid s \ll r\}
\end{align*}
It is immediate from the respective definitions that $\mathcal CF$ is a continuous relational presheaf, assuming the tensor laxly preserves the totally-below relation, in the sense that $r\ll s$ and $r'\ll s'$ imply $s \tensor s'\ll r \tensor r'$. 
Also, that $\mathcal C(\mathcal C F) = \mathcal CF$. 
That is, $\mathcal C$ acts like a closure operator on relational presheaves. 
This construct extends to a functor $\mathcal C:\RelPresh \to \RelPresh_c$.

\medskip\noindent
The idea that the $\Vcat$-ification of a weak pullback preserving $\Set$-functor $T$ can be computed on a $\V$-category $\X$ by applying the relation lifting of $T$ to the ``uniformity'' $(X_r)_{r\in\V}$ of $\X$ can now be formalised as in  next result. 
It is a consequence of Proposition~\ref{prop:relpresh} and of Lemma~\ref{lem:wpb}.

\begin{thm}\label{thm:wpbcontrelpresh}
Suppose $\V$ is a completely distributive quantale, such that the tensor laxly preserves the totally-below relation. 
Let $T:\Set\to\Set$ be a functor which preserves weak pullbacks. 
Then post-composition with $\mathsf{Rel}_T:\Rel\to\Rel$ yields a functor
$$
\mathsf{Rel}_T \circ (-):
\RelPresh_c\to\RelPresh
$$
such that the diagram
$$
\xymatrix{
\RelPresh_c \ar[0,2]^-{\mathsf{Rel}_T \circ (-)}
&&
\RelPresh \ar[0,2]^-{\mathcal C}
&&
\RelPresh_c
\\
\Set
\ar[0,2]^-{T} \ar[-1,0]^{D}
&&
\Set \ar[-1,0]^{D}  \ar[0,2]^-{\Id}
&&
\Set \ar[-1,0]_{D}
}
$$
commutes. 
Moreover, the composite in the upper row is the $\Vcat$-ification of $T$.
\end{thm}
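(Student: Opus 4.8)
The plan is to prove the statement in three movements: first that post-composition with $\mathsf{Rel}_T$ really does define a functor $\RelPresh_c\to\RelPresh$; then that the closure $\mathcal C$ acts as the identity on the continuous presheaves; and finally, working through the equivalence $\Phi,\Psi$ of Proposition~\ref{prop:relpresh}, that the $\V$-category produced by the upper row carries exactly the hom-objects of $T_\V$ supplied by Lemma~\ref{lem:wpb}.

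\emph{The functor $\mathsf{Rel}_T\circ(-)$.} Given $\frak X\in\RelPresh_c$, I would check that $\mathsf{Rel}_T\circ\frak X$ is a relational presheaf. Monotonicity~\eqref{eq:monot-rel-presheaf} is inherited since $\mathsf{Rel}_T$ is monotone; the unit law holds because $\mathsf{Rel}_T$ preserves identities, so $\Id=\mathsf{Rel}_T(\Id)\subseteq\mathsf{Rel}_T(\frak X(e))$; and the composition law follows from
\[
\mathsf{Rel}_T(\frak X(r))\cdot\mathsf{Rel}_T(\frak X(s))=\mathsf{Rel}_T(\frak X(r)\cdot\frak X(s))\subseteq\mathsf{Rel}_T(\frak X(r\tensor s)),
\]
where the equality is precisely where weak-pullback preservation of $T$ is used (it makes $\mathsf{Rel}_T$ preserve relational composition) and the inclusion combines \eqref{eq:monoidal2} for $\frak X$ with monotonicity of $\mathsf{Rel}_T$. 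On a morphism $\frak f$ with underlying map $f$ I would send $f$ to $Tf$; since $(Tf)_\diamond=\mathsf{Rel}_T(f_\diamond)$, the lax-naturality square~\eqref{eq:rel-morph} is again verified by one application of composition preservation, and functoriality is immediate from that of $T$. The image need not be continuous --- this is exactly the defect exhibited by the rational/irrational example --- which is why $\mathcal C$ appears.

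\emph{The closure is the identity on $\RelPresh_c$.} For continuous $\frak Y$ and any $r$, antitonicity~\eqref{eq:monot-rel-presheaf} gives $\frak Y(r)\subseteq\frak Y(s)$ for every $s\ll r$, hence $\frak Y(r)\subseteq(\mathcal C\frak Y)(r)$; conversely, complete distributivity yields $r=\bigvee\{s\mid s\ll r\}$, so continuity of $\frak Y$ gives $\frak Y(r)=\bigcap_{s\ll r}\frak Y(s)=(\mathcal C\frak Y)(r)$. Thus $\mathcal C\frak Y=\frak Y$, and in particular the right-hand square commutes, $\mathcal C\circ D=D$. (That $\mathcal C$ lands in $\RelPresh_c$ at all is where the hypothesis that $\tensor$ laxly preserves the totally-below relation enters, as recorded just before the theorem.)

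\emph{The upper row computes $T_\V$.} Write $G=\mathsf{Rel}_T\circ\Phi\X$, so that $G(r)=\mathsf{Rel}_T(X_r)$. By the description of $\Psi$ in Proposition~\ref{prop:relpresh}, the $\V$-category $\Psi(\mathcal CG)$ has object set $TX_\extra$ and hom-objects $\Psi(\mathcal CG)(A',A)=\bigvee\{r\mid (A',A)\in(\mathcal CG)(r)\}$, and I must match this against $T_\V\X(A',A)=\bigvee\{s\mid (A',A)\in G(s)\}$ from Lemma~\ref{lem:wpb}. For one inequality, if $(A',A)\in(\mathcal CG)(r)$ then $(A',A)\in G(s)$ for every $s\ll r$, whence $s\le T_\V\X(A',A)$ for all such $s$; joining and using $r=\bigvee\{s\mid s\ll r\}$ gives $r\le T_\V\X(A',A)$, so $\Psi(\mathcal CG)(A',A)\le T_\V\X(A',A)$. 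For the reverse, antitonicity of $G$ gives $G(s)\subseteq G(s')$ for $s'\ll s$, hence $G(s)\subseteq(\mathcal CG)(s)$; thus $(A',A)\in G(s)$ forces $s\le\Psi(\mathcal CG)(A',A)$, and taking the join yields $T_\V\X(A',A)\le\Psi(\mathcal CG)(A',A)$. Hence $\Psi(\mathcal CG)=T_\V\X$, so the composite of the upper row is the $\Vcat$-ification of $T$. Commutativity of the outer rectangle, $\mathcal C(\mathsf{Rel}_T\circ DX)=D(TX)$, is then the assertion $T_\V\circ D=D\circ T$, i.e.\ that the unit of $\Lan{D}{(DT)}$ is an isomorphism on discrete $\V$-categories, which holds by Theorem~\ref{thm:unitiso}. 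The crux is this last movement: the closure $\mathcal C$, while genuinely altering the non-continuous presheaf $G$, must not change the induced $\V$-metric, and complete distributivity (through $r=\bigvee\{s\mid s\ll r\}$ and Lemma~\ref{lemma}) is precisely what reconciles Lemma~\ref{lem:wpb} with the continuous presheaf representing $T_\V\X$.
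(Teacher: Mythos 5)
Your third movement is, up to transport along the equivalence of Proposition~\ref{prop:relpresh}, exactly the paper's proof: the paper establishes the ``moreover'' clause by the chain of equivalences
$(A',A)\in(T_\V\X)_r \Leftrightarrow r\le\bigvee\{s\mid (A',A)\in\mathsf{Rel}_T(X_s)\} \Leftrightarrow (A',A)\in\bigcap\{\mathsf{Rel}_T(X_s)\mid s\ll r\} = \mathcal C(\mathsf{Rel}_T\circ\Phi(\X))(r)$,
using Lemma~\ref{lem:wpb} together with the completely-distributive fact that $r\le\bigvee\frak{D}$ iff every $s\ll r$ lies in $\frak{D}$, for downsets $\frak{D}$; your two inequalities comparing $\Psi(\mathcal C G)(A',A)$ with $T_\V\X(A',A)$ are the same computation read on hom-objects rather than on $r$-level sets. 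Your first two movements (functoriality of $\mathsf{Rel}_T\circ(-)$, and $\mathcal C$ restricting to the identity on $\RelPresh_c$, whence the right-hand square) are correct and supply bookkeeping that the paper's proof leaves entirely implicit.

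The genuine gap is your final step. Theorem~\ref{thm:unitiso} does not yield $T_\V D\cong DT$ under the hypotheses in force here: applied to $H=DT$ it requires all self-distances in $DT\emptyset$ to be $\top$, i.e.\ either $\V$ integral or $T\emptyset=\emptyset$, and neither is assumed (completely distributive quantales need not be integral; the three-element chain of Example~\ref{ex:quantale}\eqref{ex:quantale_three} with $e=1$ satisfies all hypotheses of the theorem). Moreover no better argument can close this step, because the rectangle genuinely fails there: take $\V$ non-integral, $T=\mathcal P$, and any set $X$. The relation lifting of the empty relation on $X$ is $\{(\emptyset,\emptyset)\}$, so $(\emptyset,\emptyset)\in\mathsf{Rel}_{\mathcal P}\bigl((DX)_s\bigr)$ for \emph{every} $s$ (diagonal for $s\le e$, the singleton for $s\not\le e$), hence $(\emptyset,\emptyset)\in\mathcal C\bigl(\mathsf{Rel}_{\mathcal P}\circ\Phi(DX)\bigr)(r)$ for every $r$; thus the upper-row composite gives the object $\emptyset\in\mathcal PX$ self-distance $\top$, whereas $\Phi(D\mathcal PX)(r)=\emptyset$ for $\bot\neq r\not\le e$. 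Equivalently $T_\V DX(\emptyset,\emptyset)=\top\neq e$, consistent with Proposition~\ref{prop:constantsvcat}. So your unconditional appeal to Theorem~\ref{thm:unitiso} is the step that breaks; a correct statement needs the extra hypothesis ``$\V$ integral or $T\emptyset=\emptyset$''. In fairness, this looseness originates in the theorem itself: the paper's own proof establishes only the ``moreover'' clause and never addresses the commutativity of the diagram at all.
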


\begin{proof}
Let $\X$ be a $\V$-category and let $\Phi(\X)$ be its continuous relational presheaf. We compute 
\begin{align*}
(A',A)\in (T_\V\X)_r 
& \ \Longleftrightarrow \ 
r \leq \bigvee_s \{ s\mid (A',A)\in \mathsf{Rel}_T(X_s) \} 
\\
& \ \Longleftrightarrow \ 
\forall s \, .\, s \ll r \, \Longrightarrow \, (A',A)\in \mathsf{Rel}_T(X_s)\\
& \ \Longleftrightarrow \ 
(A',A)\in\bigcap \{ \mathsf{Rel}_T(X_s) \mid s \ll r \} \\
& \ \Longleftrightarrow \ 
(A',A)\in \mathcal C (\mathsf{Rel}_T\circ \Phi(\X))(r)
\end{align*}
where the respective steps are due to the following. (i) Lemma~\ref{lem:wpb}. (ii) For all downsets $\frak{D}$ in a completely distributive $\V$ one has  $r \leq \bigvee \frak{D}  \Longleftrightarrow \forall s(s\ll r \Longrightarrow s\in \frak{D})$. (iii) Definition of $\bigcap$. (iv) Definition of $\mathcal C$.
\end{proof}


\subsection{Extending $\Set$-functors equipped with a $\V$-metric} \label{sec:Vmetric-functors} 

\

\bigskip\noindent
In Section~\ref{sec:beh-cat}, we will see that the behaviour of coalgebras for a $\Vcat$-ification is metrically trivial. 
But often $\Set$-functors carry a natural metric and thus can be considered to be functors $\Set\to\Vcat$, which in turn produce $\Vcat$-endofunctors with non-trivial metric coalgebraic behaviour.

\begin{exa}\label{ex:functors-with-V-metric}

\begin{enumerate}
\item 
Consider $\V=\Two$ and let $H:\Set\to\Pre$ be the powerset functor with the usual inclusion relation on subsets $HX=(\mathcal P(X),\subseteq)$. Then by~\cite{calco2011:balan-kurz}, the extension $H^\sharp$ to $\Vcat=\Pre$ maps a (pre)ordered set $(X,\leq)$ to $\mathcal P(X)$, ordered with the lower half of the Egli-Milner order on subsets, namely
\[
A' \leq A \iff \forall \, x'\in A' \,.\, \exists \, x\in A \,.\, x'\le x 
\]
for all $A',A\in \mathcal P(X)$. 

\item \label{ex:machine}
Consider the functor $T:\Set\to \Set$, $TX=X^{A}\times B$. 
Recall that $T$-coalgebras are known as deterministic automata with input set $A$ and outputs in $B$: 
the transition map of a coalgebra $X\longrightarrow X^A\times B$ sends a state $x\in X$ to a pair: 
the associated next-state function $next(x):A\longrightarrow X$ that works when receiving an input from $A$, and an output $out(x)$ in $B$. 

\medskip\noindent
Assume that the output set $B$ carries the additional structure of a $\V$-category -- that, is, there is a $\V$-category $\B$ with underlying set of objects $B$. 
This is the case for example whenever there is a natural order on $B$, or a metric, which measures how far apart two outputs might be. 

\medskip\noindent
Then the ordinary functor $T_o$ can be written as the composite $V H_o$, where $H:\Set \to \Vcat$ is the $\Vcat$-functor $HX= D X^A \tensor \B$. 
Now it is immediate to see that the latter extends to the functor $H^\sharp$ on $\Vcat$ over the ``generalised metric space'' $\B$, mapping a $\V$-category $\X$ to the tensor product of $\V$-categories $H^\sharp \X =  \X^A \tensor \B$, where $\X^A= \Vcat(DA,\X)$.

\medskip\noindent
There are two particular cases of this functor that are worth mentioning from a coalgebraic point of view:
\begin{enumerate}
\item \label{ex:stream}
We start with the easiest case, when $A$ is a singleton. 
The coalgebras for the functor $TX=X\times B$ are usually called stream automata; 
under the assumption that $B$ carries the structure of a $\V$-category $\B$, we obtain as above $H^\sharp \X = \X \otimes \B$ the stream functor on $\V$-metric spaces.

\item \label{ex:det-autom} 
If we take an arbitrary set of inputs $A$, but particularise $B$ to $\{0,1\}$, then the transition function of a coalgebra  
provides binary outputs, deciding if a state is accepting (response $1$) or not -- that is, $T$-coalgebras are deterministic automata. 
Enhance now the set of outputs with a $\V$-category structure $\Two_{r,s}$ generalising~\eqref{eq:Two_r}
\[
\Two_{r,s}(0,0)=\Two_{r,s}(1,1)=e\, , \, \Two_{r,s}(0,1)=r\, , \, \Two_{r,s}(1,0)=s
\]   
where the elements $r,s$ of the quantale satisfy $r\otimes s \leq e$, in order to produce the $\Vcat$-endofunctor $H^\sharp \X = \X^A \otimes \Two_{r,s}$. 

\end{enumerate}
\end{enumerate}
\end{exa}

\medskip\noindent
The above examples are typical. 
It happens quite often for endofunctors $T$ on $\Set$ to carry an interesting $\V$-metric. Then $TX$ is a $\V$-category rather than a mere set.
The following generalises the notion of an order on a functor \cite{hugh-jaco:simulation} from $\Two$ to an arbitrary quantale $\V$.

\begin{defi}
Let $T:\Set \to \Set$ be a functor. We say that $T$ {\em carries a $\V\hspace{-1.5pt}$-metric\/} if there is a $\Vcat$-functor $H:\Set \to \Vcat$ such that $T_o$ coincides with the composite 
$$
\xymatrix{\Set_o \ar[r]^-{H_o}& \Vcat_o \ar[r]^-{V} & \Set_o}.
$$
\end{defi}

\medskip

\noindent
Let $T$ and $H$ be as in the above definition. 
How are $T$ and $H^\sharp$, the left Kan extension of $H$ along $D$ as provided by Theorem~\ref{thm:Lan}, related?

\medskip\noindent Recall from Theorem~\ref{thm:Lan} and Corollary~\ref{cor:Lan by zigzag paths} that the unit $\alpha:H \to H^\sharp D$ of the left Kan extension is the identity on objects, in particular that $\alpha$ is a $\Vcat$-natural isomorphism in case $D$ is fully faithful, that is, if the quantale is integral. 
Hence $T_o = V H_o \cong V H_o^\sharp D_o$ always holds; using now that the counit of the ordinary adjunction $D_o \dashv V$ is again the identity on objects, we obtain an ordinary natural transformation 
$$
\xymatrix@R=20pt{
\Vcat_o
\ar[0,2]^-{H^\sharp_o}
&
&
\Vcat_o
\\
\Set_o
\ar[0,2]_-{T_o}
\ar@{<-}[-1,0]^{V}
\ar@{}[-1,2]|{\nearrow\beta}
&
&
\Set_o
\ar@{<-}[-1,0]_{V}
}
$$

\begin{prop}\label{prop:hsharp-lifting}
The natural transformation $\beta$ is component-wise bijective. 
\end{prop}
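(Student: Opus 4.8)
The plan is to unwind the construction of $\beta$ at the level of underlying sets and to observe that every ingredient of it is the identity on objects, so that each component $\beta_\X$ is literally an identity map of sets (and hence bijective). Recall how $\beta$ arises: from $T_o = V H_o$ together with the fact (Theorem~\ref{thm:Lan}, Corollary~\ref{cor:Lan by zigzag paths}) that the unit $\alpha:H\to H^\sharp D$ is the identity on objects, one gets a natural isomorphism $T_o \cong V H^\sharp_o D_o$; whiskering with $V$ and post-composing with the counit $\epsilon:D_o V\to \Id_{\Vcat_o}$ of $D_o\dashv V$ gives
\[
\beta_\X \;=\; V(H^\sharp\epsilon_\X)\circ V(\alpha_{V\X})\colon\; T_o V\X \longrightarrow V H^\sharp_o\X .
\]

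First I would pin down the common underlying set. By Corollary~\ref{cor:Lan by zigzag paths}(1), $H^\sharp\Y$ has the same objects as $H(V\Y)$ for every $\Y$; taking $\Y=\X$ and using $T_o = V H_o$ yields $V H^\sharp_o\X = V H(X_\extra) = T(X_\extra) = T_o V\X$, and likewise $V H^\sharp_o(D_o V\X) = V H(X_\extra)$ since $V_\ast D_\ast = \Id$ gives $V D_o V\X = V\X = X_\extra$. Thus the domain, the codomain, and the intermediate object of $\beta_\X$ are all the single set $T(X_\extra)$. Next I would check that each factor is the identity of this set. The unit $\alpha$ is the identity on objects (Corollary~\ref{cor:Lan by zigzag paths}(2)), so $V(\alpha_{V\X})=\Id$. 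For the counit, $\epsilon_\X=q_\X$ sends each object of $D_oV\X$ to itself by \eqref{eq:couniversal cocone}, hence $(\epsilon_\X)_\extra=\Id_{X_\extra}$; since by Corollary~\ref{cor:Lan by zigzag paths}(3) the functor $H^\sharp$ acts on a $\V$-functor $f$ as $H f_\extra$ on objects, the $\V$-functor $H^\sharp\epsilon_\X$ is again the identity on objects, whence $V(H^\sharp\epsilon_\X)=\Id$. Consequently $\beta_\X=\Id_{T(X_\extra)}$, which is in particular bijective.

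The argument is essentially bookkeeping once the explicit descriptions of $H^\sharp$ supplied by Theorem~\ref{thm:Lan} and Corollary~\ref{cor:Lan by zigzag paths} are in hand; the only point that requires a moment of care is the action of $H^\sharp$ on the counit, i.e.\ that applying $H^\sharp$ to the identity-on-objects $\V$-functor $q_\X$ again produces an identity-on-objects map, which is precisely the content of Corollary~\ref{cor:Lan by zigzag paths}(3). Notice that this computation in fact yields slightly more than is claimed: each $\beta_\X$ is the identity map of $T(X_\extra)$, so $\beta$ is automatically a natural isomorphism of the ordinary $\Set_o$-valued functors $T_o V$ and $V H^\sharp_o$. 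The substance of the statement is therefore exactly that passing from $T_o V\X$ to the set of objects of $H^\sharp\X$ discards nothing except the $\V$-metric that $H^\sharp$ superimposes on that set.
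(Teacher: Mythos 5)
Your proof is correct and takes essentially the same route as the paper, which states this proposition without a separate proof precisely because the argument is the one you give: the unit $\alpha$ and the counit $q_\X$ of $D_o\dashv V$ are both identity-on-objects, so by Corollary~\ref{cor:Lan by zigzag paths} each component $\beta_\X = V(H^\sharp q_\X)\circ V(\alpha_{V\X})$ is the identity map of $T(X_\extra)$, hence bijective. Your write-up simply makes this bookkeeping explicit, including the slightly stronger conclusion that $\beta$ is in fact a natural isomorphism (indeed the identity) of the ordinary functors $T_oV$ and $VH^\sharp_o$.
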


\noindent
Consequently, $H^\sharp$ is a lifting of $T$ to $\Vcat$ as in Definition~\ref{def:ext-lift}\eqref{def:lift}. 

\medskip\noindent We will exhibit below another possible way of  lifting $T$ to $\Vcat$.

\begin{exa}[\bf The Kantorovich lifting]\label{exle:kantorovich}

Let $T:\Set \to \Set$ be a functor and let $\hs: T\V_o \to \V_o$ be an arbitrary map (a $\V$-valued predicate lifting), where by slight abuse we identify the quantale with its underlying set of elements.

\medskip\noindent Using the $\V$-valued predicate lifting $\hs$, we can endow $T$ with a $\V$-metric $H:\Set \to \Vcat$ as follows: 
for each set $X$, put $HX$ to be the $\V$-category with set of objects $TX$, and $\V$-distances 
\begin{equation}\label{eq:discrete-kant}
(HX) (A',A) 
= 
\bigwedge_{h: X \to \V_o}  
[(\hs \circ Th)(A'), (\hs \circ Th)(A)]
\end{equation}
Remark that the $\V$-category $HX$ is precisely the initial lift of $TX$ with respect to the family of maps $\hs \circ Th:TX \to \V_o$ indexed by all $h:X \to \V_o$. In particular, the $\V$-distances $(HX) (A',A)$ are the greatest such that all maps $\hs \circ Th$ are actually $\V$-functors $HX \to \V$. 

\noindent
For a map $f:X \to Y$, we let $Hf$ act as $Tf$ on objects. It is easy to see that $Hf:HX \to HY$ is a $\V$-functor. 

\medskip\noindent 
The above defines a $\Vcat$-functor $H:\Set \to \Vcat$ 
(the $\Vcat$-enrichment being a consequence of $\Set$ being free as a $\Vcat$-category) with $V H_o=T_o$, for every map \linebreak $\hs:T\V_o\to \V_o$.  

\medskip\noindent
That is, $H$ is a $\V$-metric for $T$ and we may consider the lifting of $T$ given by the left Kan extension $H^\sharp$, as discussed above. 
\noindent 
In order to obtain (possibly) another lifting of $T$ to $\Vcat$, we make an additional assumption: $\hs$ is $\V$-monotone,%
\footnote{
This generalises the notion of monotone predicate lifting from the two-element quantale to arbitrary $\V$, see~\cite[Section~7]{lmcs:bkv}.} %
in the following sense: for every set $X$ and maps $h,k: X \to \V_o$, the inequality 
\begin{equation}\label{eq:pred-V-monot}
\bigwedge_{x\in X} [h(x),k(x)] \leq \bigwedge_{A \in TX} [(\hs \circ Th)(A), (\hs \circ Tk)(A)]
\end{equation}
holds.%
\footnote{
In categorical terms, it says that the correspondence $\Vcat(DX,\V){\rightarrow}\Vcat(DTX,\V)$, $h \mapsto \hs \circ Th$ is a $\V$-functor.} %
In particular, taking $X=\emptyset$ and $h=k$ the unique map from the empty set, we see that $\V$-monotonicity of $\hs$ entails that self-distances on objects of $H\emptyset$ are $\top$, hence $H^\sharp D=H$ by Theorem~\ref{thm:unitiso}.

\medskip\noindent
Consider the $\Vcat$-functor $\bar H:\Vcat \to \Vcat$ which  maps a $\V$-category $\X$ to the $\V$-category $\bar H \X$ with set of objects $T X_\extra$, and $\V$-homs 
\begin{equation}\label{eq:kant}
\bar H \X(A',A) 
= 
\bigwedge_{h: \X \to \V}  
[(\hs \circ Th_\extra)(A'), (\hs \circ Th_\extra)(A)]
\end{equation}
for every $A', A$ in $TX_\extra$, where this time $h$ ranges over $\V$-functors and the notation $h_\extra$ refers to the object assignment of a $\V$-functor $h$, as in item~\eqref{item 2.a} of the proof of Theorem~\ref{thm:Lan}. 

\noindent 
As above for $H$, the $\V$-category $\bar H \X$ is precisely the initial lift of $TX_\extra$ with respect to the family of maps $\hs \circ Th_\extra:TX_\extra \to \V_o$ (indexed by all $\V$-functors $h:\X \to \V$), in particular~\eqref{eq:kant} provides the greatest $\V$-distances such that for all $\V$-functors $h:\X \to \V$, the composite maps $\hs \circ Th_\extra$ are also $\V$-functors. 

\noindent 
For $f:\X \to \Y$ a $\V$-functor, let $\bar Hf$ act as $Tf_\extra$ on objects. Observe that it is the $\V$-monotonicity of $\hs$ which ensures that $\bar H$ is indeed a $\Vcat$-functor. 
It is clear that this $\Vcat$-functor $\bar H$ is a lifting for $T$. The particular case $\V=[0,\infty]$ provides a generalised Kantorovich lifting as in~\cite{bbkk:calco2015-ext}. 

\medskip\noindent
Relations~\eqref{eq:discrete-kant} and~\eqref{eq:kant} ensure that for each $\V$-category $\X$, the identity on objects is a $\V$-functor $\bar c_\X:HX_\extra\to \bar H\X$. We will now prove that $\bar c_\X$ satisfies~\eqref{eq:couniversal-property}, thus it induces a $\V$-functor $\gamma_\X: H^\sharp \X \to \bar H\X$ such that $\gamma_\X \circ c_\X = \bar c_\X$.  

\noindent 
Let thus $r$ an arbitrary element of the quantale. Then we have
\begin{equation}\label{eq:proof1}
\begin{aligned}
&
\bigwedge_{C \in HX_r} \bar H \X (\bar c_\X Hd^r_0(C), \bar c_\X Hd^r_1(C)) 
&
= 
&
\bigwedge_{(A',A)\in \mathsf{Rel}_T(X_r)} \bar H \X (\bar c_\X (A'), \bar c_\X (A))
\\
&
&
=
&
\bigwedge_{(A',A)\in \mathsf{Rel}_T(X_r)} \bigwedge_{h:\X \to \V} [\hs\circ Th_\extra(A'),\hs\circ Th_\extra(A)]
\\
&
&
\geq
&
\quad \bigwedge_{w\in T\V_r} [\hs\circ Td^r_0(w),\hs\circ Td^r_1(w)]
\end{aligned}
\end{equation}
because 
\[
(Th_\extra \times Th_\extra)(\mathsf{Rel}_T(X_r)) \subseteq \mathsf{Rel}_T((h_\extra \times h_\extra)(X_r)) \subseteq \mathsf{Rel}_T(\V_r)
\]
In the above, we have used that $h:\X \to \V$ is a $\V$-functor and that relation lifting preserves inclusions, and denoted $\V_r = \{(s',s) \mid r\leq [s',s]\}$ and $d^r_0, d^r_1:\V_r \to \V_\extra$ as for any $\V$-category.  

\noindent
Using now~\eqref{eq:pred-V-monot} for the set $\V_r$ and for the maps $d^r_0, d^r_1$, we see that 
\begin{equation}\label{eq:proof2}
r \leq \bigwedge_{(s',s)\in \V_r} [d^r_0(s',s),d^r_1(s',s)] \leq \bigwedge_{w \in T\V_r} [\hs\circ Td^r_0(w),\hs\circ Td^r_1(w)]
\end{equation}
Putting together~\eqref{eq:proof1} and~\eqref{eq:proof2} shows that $\bar c_\X$ satisfies~\eqref{eq:couniversal-property}. 

\medskip\noindent
In conclusion, there is a $\Vcat$-natural transformation $H^\sharp \to \bar H$ whose components act as identity on objects, that is, the $\V$-distances in $H^\sharp \X$ are always smaller than in $\bar H\X$. 

\medskip\noindent Corroborating the above with the particular example of the Hausdorff metric $\bar H$ on the (finite) powerset functor $T=\mathcal P$ exhibited in~\cite{bbkk:calco2015-ext}, it raises the question under which conditions $\bar H \cong H^\sharp$ holds, very much in the spirit of the Kantorovich duality~\cite{villani}. Currently this aspect is still under research and will not be treated in this paper. 
\qed
\end{exa}


\subsection{Closure properties of $\Vcat$-ifications}
\label{sec:closure}

\

\bigskip\noindent
We have seen in Proposition~\ref{prop:Vcatification=functor} that $\Vcat$-ifications are closed under conical colimits. 
In this section, we give sufficient conditions for the $\Vcat$-ifications to be closed under composition and finite products.

\medskip\noindent
We will first discuss when $\Vcat$-ifications are closed under composition:
Generalising a result from \cite[Remark 4.8(4)]{lmcs:bkv} for the case $\V=\Two$, given weak pullback preserving functors $T,T':\Set\to \Set$, we will see that the $\Vcat$-ification of $TT'$ is the composite of the $\Vcat$-ifications $T_\V$ and $T'_\V$ if the quantale $\V$ satisfies the additional assumption of being completely distributive.  

\medskip

\noindent Observe that by the universal property of left Kan extensions, there is a $\Vcat$-natural transformation $(TT')_\V \to T_\V T'_\V$, for any functors $T,T':\Set \to \Set$. 
The component of this natural transformation at a $\V$-category $\X$, namely the $\V$-functor $(TT')_\V\X \to T_\V T'_\V\X$, is easily seen to act as identity on objects,\footnote{
Notice that both $\V$-categories $(TT')_\V \X$ and $T_\V T'_\V \X$ share the same set of objects, namely $TT' X_\extra$.} and on $\V$-homs it provides the inequality
\[
(TT')_\V \X (A',A) \leq T_\V T'_\V \X(A',A)
\]
for all objects $A',A \in TT'X_\extra$.
\medskip

\begin{prop}
Assume that the quantale $\V$ is completely distributive. 
Let $T,T':\Set\to \Set$ be functors which preserve weak pullbacks. 
Then the inequality
\[
T_\V T'_\V \X(A',A) \leq (TT')_\V \X(A',A)
\]
holds for all $A',A \in TT'X_\extra$.
\end{prop}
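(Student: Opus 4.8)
The plan is to combine the relation-lifting formula of Lemma~\ref{lem:wpb} with complete distributivity, exactly in the spirit of the proof of Theorem~\ref{thm:wpbcontrelpresh}. Since $T$ and $T'$ preserve weak pullbacks, so does the composite $TT'$, and hence Lemma~\ref{lem:wpb} applies to all three functors. Writing $\Y=T'_\V\X$ and applying formula~\eqref{eq:wpb} for $T$ to the $\V$-category $\Y$, I obtain
$$T_\V T'_\V\X(A',A)=\bigvee_t\{\,t\mid (A',A)\in\mathsf{Rel}_{T}(\Y_t)\,\},$$
where $\Y_t=(T'_\V\X)_t=\{(B',B)\mid t\le T'_\V\X(B',B)\}$. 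It therefore suffices to show that whenever $(A',A)\in\mathsf{Rel}_{T}((T'_\V\X)_t)$ one has $t\le (TT')_\V\X(A',A)$; taking the supremum over all such $t$ then yields the claim. Together with the reverse inequality noted just before the proposition, this establishes that $\Vcat$-ification is closed under composition of weak-pullback-preserving functors.

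The heart of the argument is to describe the cut $(T'_\V\X)_t$ via complete distributivity. By Lemma~\ref{lem:wpb} for $T'$, for each pair $(B',B)$ the set $\mathfrak{D}=\{s\mid (B',B)\in\mathsf{Rel}_{T'}(X_s)\}$ is a downset (using monotonicity of $\mathsf{Rel}_{T'}$ together with $s'\le s\Rightarrow X_s\subseteq X_{s'}$) whose join is $T'_\V\X(B',B)$. Hence, by the complete-distributivity criterion used in step (ii) of the proof of Theorem~\ref{thm:wpbcontrelpresh} — namely $t\le\bigvee\mathfrak{D}\iff(u\ll t\Rightarrow u\in\mathfrak{D})$ for downsets $\mathfrak{D}$ — membership $(B',B)\in(T'_\V\X)_t$ is equivalent to $(B',B)\in\mathsf{Rel}_{T'}(X_u)$ for every $u\ll t$. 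In other words,
$$(T'_\V\X)_t=\bigcap\{\,\mathsf{Rel}_{T'}(X_u)\mid u\ll t\,\},$$
so in particular $(T'_\V\X)_t\subseteq\mathsf{Rel}_{T'}(X_u)$ for each single $u\ll t$.

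Fixing $u\ll t$, I then transport this containment through $\mathsf{Rel}_T$: monotonicity of $\mathsf{Rel}_T$ gives
$$(A',A)\in\mathsf{Rel}_{T}((T'_\V\X)_t)\subseteq\mathsf{Rel}_{T}(\mathsf{Rel}_{T'}(X_u))=\mathsf{Rel}_{TT'}(X_u),$$
the last equality being the functoriality of relation lifting under composition of functors, $\mathsf{Rel}_{TT'}=\mathsf{Rel}_{T}\circ\mathsf{Rel}_{T'}$. As this holds for every $u\ll t$, and $\{s\mid(A',A)\in\mathsf{Rel}_{TT'}(X_s)\}$ is again a downset with join $(TT')_\V\X(A',A)$ by Lemma~\ref{lem:wpb} for $TT'$, the same complete-distributivity criterion gives $t\le(TT')_\V\X(A',A)$, as required.

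I expect the main obstacle to be the cut computation of the second paragraph: it is precisely here that complete distributivity is indispensable, the subtlety being that the containment $(T'_\V\X)_t\subseteq\mathsf{Rel}_{T'}(X_u)$ holds only for $u$ strictly totally-below $t$ and in general fails at $u=t$ — this is exactly the non-continuity phenomenon exhibited by the rational/irrational example preceding Theorem~\ref{thm:wpbcontrelpresh}. A secondary point to record is the identity $\mathsf{Rel}_{TT'}=\mathsf{Rel}_{T}\circ\mathsf{Rel}_{T'}$, a standard property of relation lifting that may be taken for granted here; everything else reduces to monotonicity of relation lifting and the two uses of the complete-distributivity criterion.
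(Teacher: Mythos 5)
Your proof is correct and is essentially the paper's own argument: both reduce the claim, via Lemma~\ref{lem:wpb} applied to $T$, $T'$ and $TT'$, to showing that $(A',A)\in\Rel_T((T'_\V\X)_t)$ implies $(A',A)\in\Rel_{TT'}(X_u)$ for all $u\ll t$, and both obtain this from the containment $(T'_\V\X)_t\subseteq\Rel_{T'}(X_u)$ for each $u\ll t$, monotonicity of $\Rel_T$, and the identity $\Rel_T\circ\Rel_{T'}=\Rel_{TT'}$, applying the completely-distributive criterion for downsets twice. The only cosmetic difference is that you record the full intersection formula $(T'_\V\X)_t=\bigcap\{\Rel_{T'}(X_u)\mid u\ll t\}$, where the paper states only the one-sided containment it needs.
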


\begin{proof}
Because $T$ preserves weak pullbacks, $T_\V T'_\V \X (A',A) = \bigvee \{r \mid (A',A) \in \Rel_T((T'_\V \X)_r)\}$ by Lemma~\ref{lem:wpb}. 
Let $r\in \V_o$ such that $(A',A)\in \Rel_T((T'_\V \X)_r)$. Given that $\V$ is completely distributive, to show that
\[
r\leq (TT')_\V \X(A',A) = \bigvee\{s \mid (A',A)\in \Rel_{TT'}(X_s) \}
\]
is the same as
\[
\forall \, s\in \V_o\,. \, s \ll r \, \Longrightarrow (A',A) \in \Rel_{TT'}(X_s)
\] 
because $\{s \mid (A',A)\in \Rel_{TT'}(X_s) \}$ is a downset and $\V$ is  completely distributive. 
Also notice that 
\begin{equation}
\label{eq:r-cut}
(T'_\V \X)_r \subseteq \Rel_T'(X_s) \ \ \  
\end{equation}
holds for all $s \ll r$. Indeed, for $(B',B) \in (T'_\V \X)_r$, we have that  
\[
r \leq T'_\V \X (B',B) = \bigvee \{ s \mid (B',B)\in \Rel_T'(X_s) \} 
\]
using that $T'$ preserves weak pullbacks and Lemma~\ref{lem:wpb}. 
But the relation above is equivalent to 
\[
\forall \, s\in \V_o\,. \, s \ll r \, \Longrightarrow (B',B) \in \Rel_T'(X_s)
\]
Hence $(T'_\V \X)_r \subseteq \Rel_T'(X_s)$ holds for all $s \ll r$. Therefore we have 
\[
(A',A)\in \Rel_T((T'_\V \X)_r) \subseteq \Rel_T(\Rel_{T'}(X_s)) = \Rel_{TT'}(X_s) 
\]
for all $s\ll r$, where the last equality follows from~\cite[Section~4.4]{ckw}. 
\end{proof}

\begin{cor}
Let $\V$ be a completely distributive commutative quantale and $T,T':\Set \to \Set$ functors which preserve weak pullbacks. Then
\[
(TT')_\V = T_\V T'_\V
\]
holds. 
\end{cor}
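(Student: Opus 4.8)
The plan is to derive the corollary by simply combining the two inequalities already in hand, since the substantive work lies entirely in the preceding Proposition. Recall first that, for arbitrary functors $T,T':\Set\to\Set$, the universal property of the left Kan extension supplies a canonical $\Vcat$-natural transformation $\kappa:(TT')_\V \to T_\V T'_\V$ whose component $\kappa_\X$ at each $\V$-category $\X$ is the identity on the common object-set $TT'X_\extra$ and, being a $\V$-functor, yields
\[
(TT')_\V \X (A',A) \leq T_\V T'_\V \X(A',A)
\]
for all $A',A\in TT'X_\extra$. This inequality needs no hypothesis on $\V$ or on $T,T'$.

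Under the standing assumptions --- that $\V$ is completely distributive and that $T$ and $T'$ both preserve weak pullbacks --- the Proposition supplies the reverse inequality
\[
T_\V T'_\V \X(A',A) \leq (TT')_\V \X(A',A)
\]
for every pair $A',A$. First I would put these two inequalities together to obtain
\[
(TT')_\V \X(A',A) = T_\V T'_\V \X(A',A)
\]
for all $A',A$ and every $\V$-category $\X$.

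Finally I would note that $(TT')_\V\X$ and $T_\V T'_\V\X$ have the same underlying object-set $TT'X_\extra$, and we have just shown that their $\V$-homs coincide; hence they are equal as $\V$-categories, and the identity-on-objects comparison $\V$-functor $\kappa_\X$ is the identity. Since a $\V$-functor is determined by its object-assignment, this identification is automatically natural in $\X$, so $\kappa$ is the identity $\Vcat$-natural transformation and the claimed equality $(TT')_\V = T_\V T'_\V$ of $\Vcat$-functors follows. I expect no genuine obstacle here: the entire mathematical content is the reverse inequality handled in the Proposition (where complete distributivity and weak-pullback preservation are actually used, via Lemma~\ref{lem:wpb} and the composition law $\Rel_T\circ\Rel_{T'}=\Rel_{TT'}$), whereas the forward inequality and the agreement on objects come for free from the universal property of the Kan extension.
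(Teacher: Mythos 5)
Your proposal is correct and matches the paper's (implicit) argument exactly: the paper states this corollary without proof, precisely because it follows by combining the canonical identity-on-objects comparison $(TT')_\V \to T_\V T'_\V$ noted before the proposition with the reverse inequality established in the proposition. Your additional remark that agreement on objects and hom-values forces equality of the $\Vcat$-functors (since $\V$-functors are determined by their object-assignments) is the same routine bookkeeping the paper leaves to the reader.
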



\noindent Next we show that the $\Vcat$-ification process is also closed under finite products, if some additional conditions are imposed to $\V$.

\begin{prop}
\label{prop:presfinprod}
Let $\V$ be an integral commutative quantale such that finite meets distribute over arbitrary joins in $\V$ (the underlying lattice of $\V$ is a frame). 
Then the $\Vcat$-ification assignment $({-})_\V:[\Set,\Set]\to [\Vcat,\Vcat]$, $T\mapsto T_\V$, preserves finite products of weak pullback preserving functors. 
\end{prop}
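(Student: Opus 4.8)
The plan is to reduce the statement to a single lattice-theoretic identity by computing both $(T\times T')_\V$ and $T_\V\times T'_\V$ explicitly through Lemma~\ref{lem:wpb}. First I would record that the product functor $T\times T'$ again preserves weak pullbacks: since $(T\times T')P=TP\times T'P$ and the pullback of a product cospan is the product of the pullbacks, the canonical comparison is a product of two surjections, hence surjective, so the weak pullback condition is inherited. Thus Lemma~\ref{lem:wpb} applies to all three of $T$, $T'$ and $T\times T'$.

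Next I would analyse the behaviour of relation lifting under products. For a relation $R$ represented by a span $(p,q)$, applying $T\times T'$ produces the span with legs $(Tp,T'p)$ and $(Tq,T'q)$ out of $TZ\times T'Z$; unravelling~\eqref{eq:relofspan} shows that, under the canonical identification of $(TX_\extra\times T'X_\extra)\times(TX_\extra\times T'X_\extra)$ with $(TX_\extra\times TX_\extra)\times(T'X_\extra\times T'X_\extra)$, one has
$$
\mathsf{Rel}_{T\times T'}(R)=\mathsf{Rel}_T(R)\times\mathsf{Rel}_{T'}(R).
$$
Substituting $R=X_s$ and inserting this into the formula of Lemma~\ref{lem:wpb}, the $\V$-distance $(T\times T')_\V\X((A',B'),(A,B))$ becomes $\bigvee(S\cap S')$, where $S=\{s\mid(A',A)\in\mathsf{Rel}_T(X_s)\}$ and $S'=\{s\mid(B',B)\in\mathsf{Rel}_{T'}(X_s)\}$. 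On the other hand, since the product $\V$-category has homs given by meets of the factor-homs (as in Example~\ref{exs:polynomial_functors}), we get $T_\V\X(A',A)\wedge T'_\V\X(B',B)=(\bigvee S)\wedge(\bigvee S')$. Both $\V$-categories have underlying object set $TX_\extra\times T'X_\extra$, and the projections $T\times T'\to T$, $T\times T'\to T'$ induce the canonical identity-on-objects comparison $\Vcat$-natural transformation $(T\times T')_\V\to T_\V\times T'_\V$.

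It then remains to prove $\bigvee(S\cap S')=(\bigvee S)\wedge(\bigvee S')$. The crucial observation is that $S$ and $S'$ are \emph{downsets}: monotonicity of $X_\_$ (so $s'\le s$ gives $X_s\subseteq X_{s'}$) together with monotonicity of $\mathsf{Rel}_T$ yields that $s\in S$ and $s'\le s$ force $s'\in S$, and likewise for $S'$. The inequality ``$\le$'' is immediate and is exactly the comparison map above. For ``$\ge$'', the frame assumption gives $(\bigvee S)\wedge(\bigvee S')=\bigvee_{s\in S,\,s'\in S'}(s\wedge s')$; since $s\wedge s'\le s\in S$ and $s\wedge s'\le s'\in S'$ and both $S$ and $S'$ are downsets, each $s\wedge s'$ lies in $S\cap S'$, so the whole join is bounded above by $\bigvee(S\cap S')$, and we conclude equality.

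I expect the main obstacle to be the careful verification of the relation-lifting product formula, and in particular checking that the identification of object sets is coherent so that the comparison map is genuinely the identity on objects; once that is in place the frame law does all the remaining work in the one nontrivial direction. Integrality of $\V$ is used only to guarantee, via Proposition~\ref{prop:D-ff}, that the left Kan extensions are honest extensions and that the self-distances on the diagonal behave compatibly under the meet, so that the computation of $T_\V\times T'_\V$ really is pointwise as described.
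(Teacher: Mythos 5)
Your treatment of binary products is correct and is essentially the paper's own argument: both proofs run on Lemma~\ref{lem:wpb}, the observation that the sets $S=\{s\mid(A',A)\in\mathsf{Rel}_T(X_s)\}$ and $S'$ are downsets (monotonicity of $X_\_$ and of relation lifting), and the frame law to turn $(\bigvee S)\wedge(\bigvee S')$ into a join of meets $s\wedge s'$, each of which lands in $S\cap S'$. The paper writes this as a single chain of (in)equalities rather than isolating the lattice identity $\bigvee(S\cap S')=(\bigvee S)\wedge(\bigvee S')$, but the content is identical. Your two preliminary verifications --- that $T\times T'$ preserves weak pullbacks and that $\mathsf{Rel}_{T\times T'}(R)=\mathsf{Rel}_T(R)\times\mathsf{Rel}_{T'}(R)$ --- are facts the paper uses silently (the latter in the step passing from the pairwise condition to $\Rel_{T\times T'}(X_r)$), so making them explicit is a genuine improvement in rigor, and both arguments are sound.

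There is, however, one omission and one misattribution. The proposition concerns \emph{finite} products, which includes the empty product: the paper's proof begins by checking that the $\Vcat$-ification of the constant functor at the one-element set is the constant functor at $\One$, which is terminal in $[\Vcat,\Vcat]$ precisely because $e=\top$. Your proposal never treats this nullary case. Relatedly, your closing sentence locates the use of integrality in Proposition~\ref{prop:D-ff} and in the computation of $T_\V\times T'_\V$; in fact the binary-product argument (yours and the paper's alike) nowhere needs $e=\top$ --- Lemma~\ref{lem:wpb} and the product $\V$-category formula hold over any commutative quantale --- and integrality is invoked in the paper exactly and only to settle the empty product. So the gap is small but real: add the terminal-object case, and note that this is where the integrality hypothesis earns its keep.
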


\begin{proof}
The empty product, that is, the constant functor $\Set \to \Set$ at the one-element set has as $\Vcat$-ification the constant functor at $\One$. 
Because $e=\top$ holds in $\V$ by hypothesis, the latter is the terminal object in $[\Vcat, \Vcat]$~\cite{bird:PhD}.

\medskip\noindent 
Let us look now at binary products of weak-pullback preserving functors $T,T':\Set \to \Set$. We will denote by $T\times {T'}$ the product in $[\Set,\Set]$, sending $X$ to $TX \times {T'}X$ and shall extend this notation to $[\Vcat,\Vcat]$.

\medskip\noindent 
Now observe that for a $\V$-category $\X$, the $\V$-categories $T_\V\X \times T'_\V\X$ and $(T\times {T'})_\V \X$ share the same objects. 
The universal property of products induces a $\V$-functor $(T \times T')_\V \X \to T_\V \X \times T'_\V \X$, which on the corresponding $\V$-homs says that the inequality
\[
(T\times {T'})_\V(\X)((A',B'),(A,B)) \le (T_\V \times T'_\V)(\X)((A',B'),(A,B))
\]
holds for each objects $A',A$ of $T_\V \X$ and $B', B$ of $T'_\V \X$. 
The converse inequality is shown below, using  Lemma~\ref{lem:wpb} and the frame law:
\begin{align*}
&
(T_\V \times T'_\V)(\X)((A',B'),(A,B)) = 
\\ 
&T_\V(\X)(A',A) \wedge T'_\V(\X)(B',B) = 
\\
&\bigvee\{ t \mid (A',A) \in \Rel_T(X_t))\} \wedge \bigvee\{ s \mid (B',B) \in \Rel_{T'}(X_s))\} =
\\
&\bigvee\{ t \wedge s \mid (A',A) \in \Rel_T(X_t), (B',B) \in \Rel_{T'}(X_s)\} \leq 
\\
&\bigvee\{ t \wedge s \mid (A',A) \in \Rel_T(X_{t\wedge s}), (B',B) \in \Rel_{T'}(X_{t\wedge s})\} \leq
\\
&\bigvee\{ r \mid (A',A) \in \Rel_T(X_r), (B',B) \in \Rel_{T'}(X_r)\} =
\\
&\bigvee\{ r \mid ((A',B'),(A,B)) \in \Rel_{T\times T'}(X_r)\} =
\\
&(T\times T')_\V(\X)((A',B'),(A,B))
\end{align*}
That is, $(T \times T')_\V = T_\V \times T'_\V$ holds for all functors $T,T'$ which preserve weak pullbacks. 
\end{proof}


\section{Solving domain equations across different base categories} \label{sec:beh}

\

\medskip\noindent
In the previous section, we studied those functors on $\Vcat$ that arise as left Kan extensions of functors on $\Set$, with Theorems~\ref{thm:char-Vcatification} and \ref{thm:char-discrete-arities} accounting for the situations depicted on the left and right below.
\begin{equation*}
\xymatrix{
\Vcat\ar[r]^{\overline T} & \Vcat \\
\Set\ar[r]^T\ar[u]^D & \Set \ar[u]_D
}
\quad\quad 
\quad\quad
\quad\quad
\xymatrix{
\Vcat\ar[r]^{\overline T} & \Vcat \\
\Set\ar[u]^D \ar[ur]_H & 
}
\end{equation*}
In this section, we study the relationship between solving recursive equations in $\Set$ and in $\Vcat$.  
In both cases, accounted for by Theorems~\ref{thm:beh-vcatification} and \ref{thm:beh-discrete-arities}, if we have a coalgebra $X\to TX$ or $X\to VHX$, then two states in $X$ are behaviourally equivalent (bisimilar) iff they are behaviourally equivalent in $DX\to \overline TDX$. 
Moreover, while in the first case the behavioural distance of two states is either $0$ or $\infty$ (the states are behaviourally equivalent or not), in the second case the enrichment typically gives rise to non-trivial behavioural distances.

\medskip\noindent
We first review in Section~\ref{sec:2-cat} what in relation to~\cite{street:monad} could be called the ``formal theory of endofunctors''. 
Relevant references for this section include \cite{dubuc,street:monad} in case of algebras for $\V$-enriched monads and  \cite{HermidaJ98,lenisa-power-watanabe} in case of  (co)algebras for ($\V$-enriched) endofunctors.

\medskip\noindent Section~\ref{sec:beh-cat} carries out an analysis of categories of coalgebras for both liftings and extensions, in case of ``change-of-base'' for the quantale $\V$, 
extending results of \cite{calco2011:balan-kurz,lmcs:bkv}. As corollaries we get information about  final coalgebras of $\Vcat$-endofunctors $\overline T$ as in the diagrams above.

\subsection{A 2-categorical approach to coalgebras} \label{sec:2-cat} 

\ 

\bigskip\noindent
In order to relate coalgebras for different type functors on different categories, it will be convenient to work in a (2-)category where the objects are types (endofunctors) and the arrows are certain natural transformations that allow to connect types on different base categories. 
For the basic notions of $2$-categories that we need in this section we refer to~\cite{kelly-street:review-2-cats}.

\begin{defi}\label{def:endo}
Let $\ccal$ be a 2-category. 
The category $\Endo(\ccal)$ has objects $(X,T)$ given by 1-cells $T:X\to X$ in $\ccal$. 
An arrow $(F,\phi):(X,T)\to (X',T')$ is given by a 1-cell $F:X\to X'$  and a 2-cell $\phi:FT\to T'F$. 
A 2-cell $\sigma:(F,\phi)\to (F',\phi')$ is a 2-cell $\sigma:F\to F'$ such that $T'\sigma\circ\phi=\phi'\circ\sigma T$.
$$
\xymatrix{
X 
\ar[r]^T
\ar[d]_F
&
X
\ar[d]^F
\ar@{}[dl]|{\swarrow \phi}
&&
X 
\ar`l[d]`[d]_{F'}[d]
\ar[r]^T
\ar[d]^F_{\sigma \swarrow}
&
X
\ar[d]^F
\ar@{}[dl]|{\swarrow \phi}
\ar@{}[drr]|{=}
&&
X 
\ar[r]^T
\ar[d]_{F'}
&
X
\ar`r[d]`[d]^{F}[d]
\ar[d]_{F'}^{\swarrow \sigma}
\ar@{}[dl]|{\swarrow \phi}
\\
X' 
\ar[r]^{T'}
&
X'
&&
X' 
\ar[r]^{T'}
&
X'
&&
X' 
\ar[r]^{T'}
&
X'
}
$$
Notice that there is a canonical inclusion 2-functor $\mathsf{Incl}: \ccal\to\Endo(\ccal)$ mapping each object $X$ to the pair $(X,\Id)$.
\end{defi}

\medskip\noindent 
The above definition is set-up precisely for the next proposition, which allows us to say, in analogy to the formal theory of monads~\cite{street:monad}, that a general 2-category $\ccal$ admits coalgebras (for endo-1-cells) if the inclusion $\mathsf{Incl}: \ccal\to\Endo(\ccal)$ has a right 2-adjoint.%
\footnote{
Notice that changing the orientation of the 2-cells in Definition~\ref{def:endo} would lead to a different 2-category of endo-1-cells, such that the right 2-adjoint to the inclusion (if exists) produces instead the ``category of algebras'' for the endofunctor $T$.} %
This result has been independently mentioned by several authors at different moments of time (see for example~\cite{lenisa-power-watanabe} for the 2-categorical story, but also the early~\cite{cockett-spencer} which deals with algebras instead of coalgebras):

\begin{prop}
Let $\ccal$ be a 2-category with inserters.%
\footnote{ 
Inserters are dual to coinserters, see Example~\ref{exle:coinserter}.} %
Then the inclusion 2-functor $\mathsf{Incl}$ has a right 2-adjoint $\Coalg:\Endo(\ccal)\to\ccal$, mapping an object $(X,T)$ to the inserter of $\Id:X \to X$ and $T$. 
\end{prop}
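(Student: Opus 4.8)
The plan is to establish the 2-adjunction $\mathsf{Incl}\dashv\Coalg$ by exhibiting, for each object $(X,T)$ of $\Endo(\ccal)$, the inserter of $\Id$ and $T$ as a representing object for the 2-functor $Y\mapsto\Endo(\ccal)(\mathsf{Incl}(Y),(X,T))$, and then checking that this representation is 2-natural in $(X,T)$. Since a 2-adjunction is precisely a $\mathsf{Cat}$-enriched adjunction, such a 2-natural family of isomorphisms of hom-categories is exactly what is required, and the representing objects then assemble automatically into the desired right 2-adjoint.

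First I would unwind the left-hand side. For $Y$ in $\ccal$ we have $\mathsf{Incl}(Y)=(Y,\Id)$, so by Definition~\ref{def:endo} an object of $\Endo(\ccal)(\mathsf{Incl}(Y),(X,T))$ is a pair $(F,\phi)$ with a 1-cell $F:Y\to X$ and a 2-cell $\phi:F\to TF$ (the general datum $\phi:F\cdot\Id\to T\cdot F$ simplifies because the source endofunctor is the identity). A 2-cell $(F,\phi)\to(F',\phi')$ is a 2-cell $\sigma:F\to F'$ satisfying $T\sigma\circ\phi=\phi'\circ\sigma$ (again using that whiskering by $\Id_Y$ is trivial, so the condition $T'\sigma\circ\phi=\phi'\circ\sigma T$ collapses to this). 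Thus this hom-category is exactly the category of $T$-coalgebra structures carried by 1-cells into $X$.

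Second, I would recall the universal property of the inserter $\Coalg(X,T)$ of the parallel pair $\Id,T:X\to X$ (dual to the coinserter of Example~\ref{exle:coinserter}): it is equipped with a 1-cell $p:\Coalg(X,T)\to X$ and a universal 2-cell $\gamma:p\to Tp$, and its 1- and 2-dimensional universal properties say precisely that composing with $(p,\gamma)$ yields an isomorphism of categories between $\ccal(Y,\Coalg(X,T))$ and the category of pairs $(F,\phi)$ with $\phi:F\to TF$ and morphisms $\sigma$ satisfying $T\sigma\circ\phi=\phi'\circ\sigma$. Comparing with the previous paragraph, this is literally the hom-category computed there, so we obtain an isomorphism
$$\ccal(Y,\Coalg(X,T))\;\cong\;\Endo(\ccal)(\mathsf{Incl}(Y),(X,T))$$
natural in $Y$, with counit the universal coalgebra $(p,\gamma)$ regarded as an arrow $\mathsf{Incl}(\Coalg(X,T))\to(X,T)$ in $\Endo(\ccal)$.

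Finally, to upgrade this to a genuine right 2-adjoint I would verify that $\Coalg$ extends to a 2-functor and that the displayed isomorphism is 2-natural in $(X,T)$. A morphism $(G,\psi):(X,T)\to(X',T')$ sends a coalgebra $(F,\phi)$ to $(GF,\;\psi F\circ G\phi)$, and applying this to the universal coalgebra $(p,\gamma)$ produces a coalgebra on $Gp:\Coalg(X,T)\to X'$; the 1-dimensional universal property of the inserter $\Coalg(X',T')$ then yields a canonical 1-cell $\Coalg(G,\psi)$, and the action on 2-cells of $\Endo(\ccal)$ is defined likewise via the 2-dimensional universal property. I expect the main obstacle to be exactly this last point: checking functoriality, preservation of identities and vertical/horizontal composites, and the 2-naturality of the representing isomorphism. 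All of these reduce to uniqueness in the universal property of inserters, so the arguments are diagram-chases in which every induced 1-cell or 2-cell is characterised by its composite with $(p,\gamma)$; the care required is purely in bookkeeping the whiskerings $T\sigma$, $\sigma T$ and $\psi F$, $G\phi$ consistently, rather than in any essential difficulty.
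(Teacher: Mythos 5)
Your proposal is correct and takes essentially the same route as the paper: both identify 1-cells (and 2-cells) $\mathsf{Incl}(Y)\to(X,T)$ in $\Endo(\ccal)$ with inserter cones over the pair $(\Id,T)$, and invoke the universal property of the inserter $\Coalg(X,T)$ to obtain the hom-isomorphism. The paper's own proof is terser---it records only the bijection at the level of 1-cells---so your explicit treatment of the 2-cells, the 2-functoriality of $\Coalg$, and 2-naturality merely fills in details the paper leaves implicit.
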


\begin{proof}
This follows from the correspondences

\renewcommand{\arraystretch}{1.2}
\begin{tabular}{l}
(1-cell in $\C$)   $\ol F : Z \to \Coalg(X,T)$
\\
\hline
(2-cell in $\C$)  $\phi: F \to TF$  (where $F : Z \to X$ is a 1-cell in $\C$)
\\
\hline
(1-cell in $\Endo(\C)$)  $(F,\phi) : (Z,\Id)\to (X,T)$
\end{tabular}

\smallskip\noindent 
In the above, the second bijection is the definition of 1-cells in $\Endo(\C)$, while the first bijection is due to the fact that $\Coalg(X,T)$ is an inserter of $\Id$ and $T$ (in that order). 
More precisely,
$$ 
\phi:F\to TF
$$
is an ``inserter cone'', and therefore $\phi$ gives rise to $\ol F$ (that is, to the factorisation through the ``inserter cone'').
\end{proof}

\begin{rem} 
In case $\ccal $ is the 2-category of $\Vcat$-categories, $\Vcat$-functors, and $\Vcat$-natural transformations, recall from~\cite[Sect.~6.1]{bird:PhD} how to compute the inserter between two $\Vcat$-functors $F,G:\XX \to \YY$:
\begin{itemize}
\item $\mathsf{Ins}(F,G)$ is the $\Vcat$-category having as objects pairs $(X,\tau)$, where $X$ is an object of $\XX$ and $\tau:\One \to \YY(FX,GX)$ is a $\V$-functor (which only picks an object, still denoted $\tau:FX\to GX$). 
The $\V$-category-hom $\mathsf{Ins}(F,G)((X, \tau),(X', \sigma))$ is the equaliser in $\Vcat$ of the parallel pair
\[
\xymatrix@C=30pt@R=15pt{
&
\One\tensor\XX(X,X')
\ar[0,1]^-{\sigma\tensor F_{X,X'}}
&
\YY(FX',GX')\tensor \YY(FX,FX')
\ar[1,1]^{\phantom{MM}c_{FX,FX',GX'}}
&
\\
\XX(X,X')
\ar[-1,1]^{\cong}
\ar[1,1]_{\cong}
&
&
&
\YY(FX,GX')
\\
&
\XX(X,X')\tensor \One
\ar[0,1]_-{G_{X,X'}\tensor\tau}
&
\YY(FX,GX')\tensor\YY(FX,GX)
\ar[-1,1]_{\phantom{MM}c_{FX,GX,GX'}}
&
}
\]
Explicitly, the objects of $\mathsf{Ins}(F,G)((X, \tau),(X', \sigma))$ are objects $h:X \to X'$ of the $\V$-category $\XX(X,X')$ such that $\sigma \, \cdot \, Fh = Gh \, \cdot \, \tau$, with $\V$-distances \[
\mathsf{Ins}(F,G)((X, \tau),(X', \sigma))(h,k) = \XX(X,X')(h,k)
\]
for $h,k:X\to X'$ as above.
Composition and identity in $\mathsf{Ins}(F,G)((X, \tau),(X', \sigma))$ are induced from the composition in $\XX(X,X')$.

\item 
There is an obvious $\Vcat$-functor $J: \mathsf{Ins}(F,G) \to \XX$ mapping $(X,\tau)$ to $X$, and a $\Vcat$-natural transformation $\iota:FJ \Rightarrow GJ$ with components $\iota_{(X,\tau)}:\One \to \YY(FJ(X,\tau),GJ(X,\tau))$ mapping the unique object $0$ to $\tau$.  
\end{itemize}
\end{rem}

\medskip\noindent In particular, the above yields the description of the $\Vcat$-category $\Coalg(T)$ as $\mathsf{Ins}(\Id, T)$, for any $\Vcat$-functor $T:\Vcat\to \Vcat$. More in detail:
\begin{itemize}
\item Objects are $T$-coalgebras, that is, pairs $(\X,c)$ where $\X$ is a $\V$-category and $c:\X \to T\X$ is a $\V$-functor. 
\item The $\V$-category $\mathsf{Coalg}(T)((\X,c),(\Y,d))$ is the equaliser of 
\[
\xymatrix@C=65pt{%
\Vcat(\X,\Y) \ar@<+.5ex>[r]^-{\Vcat(\X,d)} \ar@<-.5ex>[r]_{\Vcat(c, T\Y) \circ T_{\X,\Y}} & \Vcat(\X,T\Y)
}
\]
that is, it has as objects $\V$-functors $f:\X \to \Y$ such that $d \circ f = T(f) \circ c$, with $\V$-distances
between two such $\V$-functors $f,g: \X \to \Y$ being given by $[\X,\Y](f,g) = \bigwedge_x \Y(f(x),g(x))$. 
\item There is a forgetful $\Vcat$-functor $\mathsf{Coalg}(T) \to \Vcat$ sending a $T$-coalgebra $(\X,c)$ to the underlying $\V$-category $\X$. 
\end{itemize}

\medskip\noindent 
One of the reasons to introduce the formal category $\Endo(\ccal)$ is that it allows the following characterisation of when adjunctions in $\ccal$ can be lifted.

\begin{prop}\label{prop:doctrinaladjunction}
$(L,\phi)\dashv (R,\psi)$ in the 2-category $\Endo(\ccal)$ if and only if $L\dashv R$ holds in $\ccal$, $\psi$ is iso, and $\phi$ is the mate of $\psi^{-1}$ under the adjunction $L\dashv R$.
\end{prop}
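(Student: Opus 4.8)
This is a "doctrinal adjunction" result (in the spirit of Kelly). We work in the 2-category $\Endo(\ccal)$ whose objects are endo-1-cells $(X,T)$, whose 1-cells $(F,\phi):(X,T)\to(X',T')$ consist of a 1-cell $F$ together with a 2-cell $\phi:FT\to T'F$, and whose 2-cells are constrained 2-cells of $\ccal$. We must show that an adjunction $(L,\phi)\dashv(R,\psi)$ in $\Endo(\ccal)$ amounts to an adjunction $L\dashv R$ in $\ccal$ together with $\psi$ being invertible and $\phi$ being the mate of $\psi^{-1}$.

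**The plan.** I would prove this by unpacking what an adjunction in $\Endo(\ccal)$ means and comparing it, componentwise, with the data in $\ccal$. The proof splits naturally into two directions, and the forward direction carries most of the content.

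\emph{From an $\Endo(\ccal)$-adjunction to the asserted data.} Suppose $(L,\phi)\dashv(R,\psi)$ holds in $\Endo(\ccal)$, say $(L,\phi):(X,T)\to(X',T')$ and $(R,\psi):(X',T')\to(X,T)$, with unit $\eta$ and counit $\eps$, which are 2-cells \emph{in $\Endo(\ccal)$}. First I would observe that the forgetful 2-functor $\Endo(\ccal)\to\ccal$ sending $(F,\phi)\mapsto F$ is a 2-functor; since 2-functors preserve adjunctions, applying it to $(L,\phi)\dashv(R,\psi)$ immediately yields $L\dashv R$ in $\ccal$ with the \emph{same} unit and counit (now regarded as plain 2-cells of $\ccal$). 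This disposes of the first conclusion with essentially no calculation. Next, I would extract that $\psi$ is invertible: the constraint that $\eta$ and $\eps$ are 2-cells of $\Endo(\ccal)$ forces commuting squares relating $\phi,\psi$ to the (co)unit, and combining these with the triangle identities of $L\dashv R$ allows one to build an explicit inverse to $\psi$ out of $\phi$, $\eta$, $\eps$. Finally, with $L\dashv R$ and $\psi$ iso in hand, the mate correspondence (pasting the unit $\eta$ and counit $\eps$ around $\psi^{-1}$) gives a canonical 2-cell; I would verify that this mate equals $\phi$ precisely by using the two compatibility squares for $\eta$ and $\eps$ again.

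\emph{The converse.} Conversely, assume $L\dashv R$ in $\ccal$, $\psi$ iso, and $\phi=$ mate of $\psi^{-1}$. I would define the candidate unit and counit of the $\Endo(\ccal)$-adjunction to be the very 2-cells $\eta,\eps$ of $L\dashv R$, and then check two things: that $\eta$ and $\eps$ genuinely are 2-cells in $\Endo(\ccal)$ (the compatibility squares), which follows because $\phi$ being the mate of $\psi^{-1}$ is exactly the pasting equation encoding those squares; and that the triangle identities hold in $\Endo(\ccal)$, which reduce to the triangle identities in $\ccal$ since 2-cell composition in $\Endo(\ccal)$ is computed as in $\ccal$.

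**Main obstacle.** The only genuinely delicate point is bookkeeping in the mate calculus: verifying that "$\phi$ is the mate of $\psi^{-1}$" is \emph{equivalent} to the conjunction of the two naturality/compatibility squares expressing that $\eta$ and $\eps$ are $\Endo(\ccal)$-2-cells. This is a standard but error-prone pasting-diagram manipulation, and I would organise it so that the invertibility of $\psi$ is used exactly where one needs to invert the relevant square to solve for $\phi$. Everything else is formal and follows from the fact that the projection $\Endo(\ccal)\to\ccal$ is a 2-functor together with the uniqueness of adjoints and of mates.
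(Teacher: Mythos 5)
Your argument is correct, but it is genuinely more self-contained than what the paper offers: the paper's entire proof is the single line ``by standard doctrinal adjunction~\cite{kelly:doctrinal}'', deferring to Kelly's theorem for algebras over a doctrine, whereas you verify the statement directly inside $\Endo(\ccal)$. Your decomposition is the right one. Unpacked, an adjunction $(L,\phi)\dashv(R,\psi)$ in $\Endo(\ccal)$ is an adjunction $L\dashv R$ in $\ccal$ (preserved by the projection 2-functor, as you observe) whose unit $\eta$ and counit $\eps$ satisfy the two compatibility squares $\psi L\circ R\phi\circ\eta T=T\eta$ and $\eps T'=T'\eps\circ\phi R\circ L\psi$, the triangle identities holding automatically downstairs. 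From these squares together with the triangle identities, the mate $RT'\eps\circ R\phi R\circ\eta TR$ of $\phi$ is a two-sided inverse of $\psi$ (the $\eta$-square gives one composite equal to the identity, the $\eps$-square the other); and once $\psi$ is invertible, each square is exactly one of the two standard equivalent formulations of ``$\phi$ is the mate of $\psi^{-1}$'', which gives both remaining implications just as you outline. What your route buys is transparency: Kelly's result is literally about lax morphisms of $D$-algebras, so invoking it for $\Endo(\ccal)$ tacitly relies on the (routine but unstated) fact that the proof uses only the mate calculus and never the algebra axioms; your direct verification makes this visible and exhibits the explicit inverse of $\psi$. What the paper's citation buys is brevity and the conceptual point that the proposition is an instance of a standard doctrinal phenomenon rather than an ad hoc computation.
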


\begin{proof} 
By standard doctrinal adjunction~\cite{kelly:doctrinal}.
\end{proof}

\noindent Since $\Coalg$ is a 2-functor, it preserves adjunction of 1-cells, thus we obtain a well-known corollary allowing us to lift an adjunction $L\dashv R$ between base objects to an adjunction between ``categories of coalgebras'':

\begin{cor}\label{cor:lift-adj}
Let $\ccal$ be a 2-category with inserters, and let $L\dashv R:X' \to X$ an adjunction in $\ccal$. 
For any $T:X\to X$, $T':X'\to X'$, such that there is an iso-2-cell $\psi:RT' \to TR$, it follows that:
\begin{enumerate}
\item $R$ lifts to $\tilde R=\Coalg(R,\psi):\Coalg(X',T')\to\Coalg(X,T)$
\item There is an adjunction $\tilde L \dashv\tilde R:\Coalg(X',T')\to\Coalg(X,T)$, where $\tilde L$ is the image under $\Coalg$ of the 1-cell in $\Endo(\ccal)$ given by $L$ and the mate of the inverse of $\psi: RT'\to TR$.
\end{enumerate}
\end{cor}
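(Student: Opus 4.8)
The plan is to reduce the statement to Proposition~\ref{prop:doctrinaladjunction} together with the fact that $\Coalg:\Endo(\ccal)\to\ccal$, being a right 2-adjoint, is a 2-functor and therefore preserves adjunctions between 1-cells.

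First I would note that the iso-2-cell $\psi:RT'\to TR$ is exactly the data required, by Definition~\ref{def:endo}, to make $R:X'\to X$ into a 1-cell $(R,\psi):(X',T')\to(X,T)$ of $\Endo(\ccal)$. Applying $\Coalg$ then produces $\tilde R=\Coalg(R,\psi):\Coalg(X',T')\to\Coalg(X,T)$, which is part~(1). That $\tilde R$ genuinely \emph{lifts} $R$ along the forgetful 1-cells $\Coalg(X,T)\to X$ and $\Coalg(X',T')\to X'$ follows from the 2-naturality of the counit $\varepsilon:\mathsf{Incl}\circ\Coalg\to\Id$ of the 2-adjunction $\mathsf{Incl}\dashv\Coalg$ evaluated at $(R,\psi)$.

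For part~(2), I would define $\phi:LT\to T'L$ to be the mate of $\psi^{-1}:TR\to RT'$ under $L\dashv R$; this is precisely the 2-cell making $(L,\phi):(X,T)\to(X',T')$ a 1-cell of $\Endo(\ccal)$ with the orientation demanded by Definition~\ref{def:endo}. Since $L\dashv R$ in $\ccal$, the 2-cell $\psi$ is invertible, and $\phi$ is the mate of $\psi^{-1}$, Proposition~\ref{prop:doctrinaladjunction} gives at once that $(L,\phi)\dashv(R,\psi)$ in $\Endo(\ccal)$. Applying the 2-functor $\Coalg$ transports this to the adjunction $\tilde L\dashv\tilde R$ in $\ccal$, with $\tilde L=\Coalg(L,\phi)$ and $\tilde R=\Coalg(R,\psi)$, exactly as claimed.

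The only point needing care---and the one I expect to be the sole (minor) obstacle---is the bookkeeping of variances: one must check that taking the mate of $\psi^{-1}$, rather than of $\psi$, yields a 2-cell $LT\to T'L$ in the direction fixed by Definition~\ref{def:endo}, so that the hypotheses of Proposition~\ref{prop:doctrinaladjunction} are met verbatim. Everything else is a formal consequence of the preservation of adjunctions by 2-functors, so no substantial difficulty arises.
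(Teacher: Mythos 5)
Your proposal is correct and follows exactly the paper's route: Proposition~\ref{prop:doctrinaladjunction} (doctrinal adjunction) supplies the adjunction $(L,\phi)\dashv(R,\psi)$ in $\Endo(\ccal)$ with $\phi$ the mate of $\psi^{-1}$, and the 2-functoriality of $\Coalg$ transports it to $\tilde L\dashv\tilde R$. The variance bookkeeping you flag does work out ($\mathrm{mate}(\psi^{-1}):LT\to T'L$ is precisely the orientation Definition~\ref{def:endo} requires), so there is no gap.
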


\noindent 
We end this section with observations on the existence and the computation of limits and colimits in $\Vcat$-categories of coalgebras. Again, these results seem to be folklore, but being unable to find them explicitly in the literature, we provide them below.

\begin{prop}\label{prop:vcatcocomplete}
The $\Vcat$-category $\Coalg(T)$, for a $\Vcat$-functor $T:\Vcat\to \Vcat$, is cocomplete, and has all (weighted) limits that $T$ preserves.
\end{prop}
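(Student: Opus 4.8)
The plan is to establish the two assertions—cocompleteness and the existence of all $T$-preserved weighted limits—by transporting (co)limits from the base $\Vcat$-category $\Vcat$ along the forgetful $\Vcat$-functor $U:\Coalg(T)\to\Vcat$, exploiting the inserter description of $\Coalg(T)=\mathsf{Ins}(\Id,T)$ recalled just above. Recall that $\Vcat$ is cocomplete and complete as a $\Vcat$-category by~\cite[Sections 3.2, 3.10]{kelly:book}. First I would treat colimits. Given a weight $W:\AA^\op\to\Vcat$ and a $\Vcat$-functor $G:\AA\to\Coalg(T)$ with $\AA$ small, form the composite $UG:\AA\to\Vcat$ and take its weighted colimit $L=\Colim{W}{(UG)}$ in $\Vcat$. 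The coalgebra structures $c_a:UGa\to T(UGa)$ assemble, by $\Vcat$-naturality, into a cocone; the crucial point is that the colimiting cylinder induces a \emph{unique} $\V$-functor $\gamma:L\to TL$ making $(L,\gamma)$ into a coalgebra and each structure map a coalgebra morphism. This uses that applying $T$ to the colimiting cocone produces a cocone into $TL$ of the same shape, so the colimit's universal property furnishes $\gamma$. I would then verify that $(L,\gamma)$ satisfies the defining weighted-colimit isomorphism \emph{in} $\Coalg(T)$, i.e. that
\[
\Coalg(T)\bigl(\Colim{W}{G},(\Y,d)\bigr)\cong[\AA^\op,\Vcat]\bigl(W,\Coalg(T)(G-,(\Y,d))\bigr),
\]
which reduces, via the equaliser description of the hom-$\V$-categories in $\Coalg(T)$, to the corresponding isomorphism in $\Vcat$ intersected with the coalgebra-morphism condition.

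For the limit assertion the argument is the mirror image but requires the extra hypothesis that $T$ preserves the weight $W$. Here I would take the weighted limit $\{W,UG\}$ in $\Vcat$; since $T$ preserves this limit, $T\{W,UG\}\cong\{W,T\circ UG\}$, and the family of structure maps $c_a$ induces a unique $\V$-functor into the limit exhibiting a coalgebra structure on $\{W,UG\}$. The same two-step verification—first produce the structure map by the universal property, then check the defining isomorphism at the level of $\Coalg(T)$ using the equaliser form of its homs—then shows this is the weighted limit in $\Coalg(T)$.

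Throughout, I would lean on the fact that $U:\Coalg(T)\to\Vcat$ creates exactly those (co)limits it can, so that the $\Vcat$-enriched universal properties follow once the underlying $\Vcat$-level isomorphisms are in place; the equaliser/inserter presentation guarantees that the hom-wise conditions match up automatically. The main obstacle is the limit half: producing the mediating structure map $\{W,UG\}\to T\{W,UG\}$ is precisely where preservation of $W$ by $T$ is indispensable—without it, $T$ applied to the limiting cone need not land in a limiting cone for $T\circ UG$, and the universal property yielding the coalgebra structure fails. For colimits no such condition is needed because $T$ sits on the \emph{domain} side of the structure maps, so any $\Vcat$-functor $T$ sends the colimiting cocone to a cocone of the right shape and the induced $\gamma$ exists unconditionally.
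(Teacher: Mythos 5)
Your proposal is correct and takes essentially the same route as the paper's proof: via the inserter description you identify the diagram in $\Coalg(T)$ with a diagram $F$ in $\Vcat$ together with a $\Vcat$-natural transformation $\xi:F\to TF$, obtain the coalgebra structure on $W*F$ from the colimit's universal property (apply $T$, precompose with $\xi$), and then verify the enriched universal property using the equaliser presentation of the hom-$\V$-categories of $\Coalg(T)$, with the limit half mirrored under the hypothesis that $T$ preserves the limit. The paper's proof is exactly this, carried out with an explicit serially commuting diagram of equalisers.
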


\begin{proof}
Let $W:\mathcal K^{\mathsf{op}}\to \Vcat$ and $\ol{F}:\mathcal K \to \Coalg(T)$ be $\Vcat$-functors, denoting the weight and the diagram of shape $\mathcal K$, a small $\Vcat$-category. 
By the inserter's universal property, to give $\ol{F}:\mathcal K \to \Coalg(T)$ is the same as to give a $\Vcat$-functor $F:\mathcal K \to \Vcat$, together with a $\Vcat$-natural transformation $\xi:F\to TF$. Then the colimit $W*F$ becomes a $T$-coalgebra with structure  $\alpha:W*F \to T(W*F)$ being the $\V$-functor which corresponds to the identity on $W*F$ under the composite
\begin{align*}
& 
\Vcat(W*F,W*F) 
&
\cong 
&&
[\mathcal K^{\mathsf{op}},\Vcat](W,\Vcat(F -, W*F)) 
\\
&& 
\overset{T}{\to} 
&&
[\mathcal K^{\mathsf{op}},\Vcat](W,\Vcat(TF -, T(W*F)))
\\
&& 
\overset{\xi}{\to} 
&&
[\mathcal K^{\mathsf{op}},\Vcat](W,\Vcat(F -, T(W*F)))
\\
&&
\cong 
&&
\Vcat(W*F,T(W*F)) 
\end{align*}

\noindent Now, let $(\X,c:\X \to T\X)$ be an arbitrary $T$-coalgebra and consider the diagram below:
\[
\xymatrix@C=30pt@R=35pt{
\Coalg(T)((W*F,\alpha), (\X,c))
\ar[d] \ar@{.>}[r]
&
[\mathcal K^{\mathsf{op}},\Vcat](W, \Coalg(T)(\ol{F}-, (\X, c)))
\ar[d]
\\
\Vcat(W*F, \X) 
\ar[r]^-{\cong}
\ar@<-.65ex>[d]
\ar@<+.65ex>[d]
&
[\mathcal K^{\mathsf{op}},\Vcat](W, \Vcat(F-,\X))
\ar@<-.65ex>[d]
\ar@<+.65ex>[d]
\\
\Vcat(W*F, T\X)
\ar[r]^-{\cong}
&
[\mathcal K^{\mathsf{op}},\Vcat](W, \Vcat(F-,T\X))
}
\]
where the bottom square commutes serially and both columns are equalizers -- the left one by construction of $\V$-category-homs in the inserter $\Coalg(T)$, and the right one because the representable $[\mathcal K^{\mathsf{op}},\Vcat](W,-)$ preserves limits, in particular equalizers. 
Consequently, there is a unique arrow (isomorphism)
\[
\xymatrix{
\Coalg(T)((W*F,\alpha), (\X,c))\ar[r]^-{\cong} & [\mathcal K^{\mathsf{op}},\Vcat](W, \Coalg(T)(\ol{F}-, (\X, c)))
}
\]
exhibiting $(W*F, \alpha)$ as the $W$-weighted colimit of $\ol{F}$.

\medskip\noindent A similar computation that we leave to the reader shows that $\Coalg(T)$ has all weighted limits that $T$ preserves. 
\end{proof}


\subsection{Relating behaviours across different base categories} \label{sec:beh-cat} 

\ 

\bigskip\noindent
We consider ``change of base'' across different $\V$. 
We therefore now write $D^\V:\Set\to\Vcat$ and $V^\V:\Vcat_o\to\Set_o$ to distinguish the discrete and forgetful functors for different $\V$. 
We drop the superscript $(-)^\V$ in case $\V=\Two$.

\medskip\noindent
In the previous section, we have shown that every $\Vcat$-functor $H:\Set\to\Vcat$ has a left Kan extension along $D^\V$, denoted $H^\sharp$. 
Now, each such functor induces an ordinary set-endofunctor simply by forgetting the $\Vcat$-structure
$$
\xymatrix{\Set_o \ar[r]^-{H_o} & \Vcat_o \ar[r]^-{V^\V} & \Set_o }
$$
Moreover, $H^\sharp$ is a lifting of $V^\V H_o$ according to Proposition~\ref{prop:hsharp-lifting}. 

\medskip\noindent
In the special case when $H$ is $D^\V T$, the above composite gives back $T$ (formally, it is $T_o$), and $H^\sharp$ is $T_\V$, the $\Vcat$-ification of $T$. 

\medskip\noindent
We will see how the corresponding behaviours are related.
In particular, we will show at the end of the section that $H^\sharp$ and $V^\V H_o$, hence $T_\V$ and $T$, induce the same behavioural equivalence (bisimilarity), while $H^\sharp$ may additionally induce a behavioural pseudometric.

\begin{rem}\label{rem:change-of-base}

For each commutative quantale $\V$, the inclusion (quantale morphism) \linebreak $\d:\Two \to \V$ given by $0\mapsto \bot, \  1 \mapsto e$, has a right adjoint (as it preserves suprema), denoted $\uv:\V \to \Two$ , which maps an element $r$ of $\V$ to $1$ if $e\le r$, and to $0$ otherwise.\footnote{
Notice that $\uv$ is only a \emph{lax} morphism of quantales, being right adjoint.} This induces as usual the \emph{change-of-base} adjunction (even a 2-adjunction, see~\cite{eilenberg-kelly})
$$
\xymatrix@C=45pt{ 
\Two 
\ar@/^1.5ex/[r]^{\d} 
\ar@{}[r]|{\perp}
&
\V
\ar@/^1.5ex/[l]^{\uv}
&
\mapsto
&
\Pre
\ar@/^1.5ex/[r]^{\d_*} 
\ar@{}[r]|{\perp}
&
\Vcat
\ar@/^1.5ex/[l]^{\uv_*}
}
$$
Explicitly, the functor $\d_*$ maps a preordered set $X$ to the $\V$-category  $\d_* X$ with same set of objects, and $\V$-homs given by $\d_* X(x',x)=e$ if $x'\leq x$, and $\bot$ otherwise. 
Its right adjoint transforms a $\V$-category $\X$ into the preorder $\uv_*\X$ with same objects again, and order $x'\leq x$ iff $e \leq \X(x',x)$ holds. 
Hence $\d_* X$ is the free $\V$-category on the preorder $X$, while $\uv_* \X$ is the underlying ordinary category (which happens to be a preorder, due to the simple nature of quantales) of the $\V$-category $\X$.   \qed

\medskip\noindent
Note that $\d_*$ is both a $\Vcat$-functor and a $\Pre$-functor, while its right adjoint $\uv_*$ (in fact, the whole adjunction $\d_*\dashv \uv_*$) is only $\Pre$-enriched.
\end{rem}

\begin{rem}\label{rem:change-of-base-cd}
If $\V$ is nontrivial and integral, the embedding $\d:\Two \to \V$ has also a left adjoint $\cv:\V \to \Two$, given by $\cv(r)=0$ iff $r=\bot$, otherwise $\cv(r)=1$. 
Notice that $\cv$ is only a colax morphism of quantales, in the sense that $\cv(e)\leq 1$ (in fact, here we have equality) and $\cv(r\tensor s) \leq \cv(r) \land \cv(s)$, for all $r,s$ in $\V$. 
 
\medskip\noindent
We will in the sequel assume that $\cv$ is actually a morphism of quantales. 
The reader can check that this boils down to the requirement that $r\tensor s =\bot$ in $\V$ implies $r=\bot$ or $s=\bot$. That is, the quantale has no {\em zero-divisors}. 
All our examples satisfy this assumption.

\medskip\noindent If the quantale has no zero-divisors, $\d_*$ also has a left adjoint $\cv_*$ mapping a $\V$-category $\X$ to the preorder $\cv_* \X$ with same objects and $x'\leq x$ in ${\cv_*\X}$ iff $\X(x',x)\neq \bot$. Moreover, the adjunction $\cv_* \dashv \d_*$ is $\Vcat$-enriched:
$$
\xymatrix@C=45pt{ 
\Two 
\ar@/_1.5ex/[r]_{\d} 
\ar@{}[r]|{\perp}
&
\V
\ar@/_1.5ex/[l]_{\cv}
&
\mapsto
&
\Pre
\ar@/_1.5ex/[r]_{\d_*} 
\ar@{}[r]|{\perp}
&
\Vcat
\ar@/_1.5ex/[l]_{\cv_*}
}
$$
\end{rem}

\medskip\noindent
 From the above remark we obtain the following propositions.

\begin{prop}
Let $\V$ be an arbitrary commutative quantale and let $\widehat T:\Pre \to \Pre$ be a locally monotone functor (that is, $\Pre$-enriched) and $\ol{T}:\Vcat\to \Vcat$ be a lifting of $\widehat T$ to $\Vcat$ (meaning that $\ol T$ is a $\Vcat$-functor such that $\uv_* \ol{T} \cong \widehat T \uv_*$ holds). 
Then the locally monotone adjunction $\d_*\dashv \uv_*$ lifts to a locally monotone adjunction $\widetilde \d_*\dashv \widetilde \uv_*$ between the associated $\Pre$-categories of coalgebras.
$$
\xymatrix@R=33pt{
&
\Coalg(\widehat T) 
\ar[d]
\ar@/^8pt/[rr]^{\widetilde \d_*}
\ar@{}[rr]|\bot
\ar@{<-}@/_8pt/[rr]_{\widetilde \uv_*}
&&
\Coalg(\ol T )
\ar[d]
&
\\
&
\Pre
\ar@/^8pt/[rr]^{\d_*}
\ar@{}[rr]|\bot
\ar@{<-}@/_8pt/[rr]_{\uv_*}
\POS!L(.7),\ar@(ul, dl)_{\widehat T}
&&
\Vcat 
\POS!R(.7),\ar@(ur, dr)^{\ol T}
&
}
$$
\end{prop}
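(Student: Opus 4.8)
The plan is to realize the lifted adjunction as a direct instance of Corollary~\ref{cor:lift-adj}, applied inside the $2$-category $\ccal = \Pre\mbox{-}\mathsf{CAT}$ of preorder-enriched categories, $\Pre$-functors and $\Pre$-natural transformations (that is, $\Twocat\mbox{-}\mathsf{CAT}$). I instantiate the data of the corollary by $X = \Pre$, $X' = \Vcat$, the adjunction $L = \d_* \dashv R = \uv_*$, and the two endo-$1$-cells $T = \widehat T$ and $T' = \ol T$. Under this dictionary the iso-$2$-cell $\psi : RT' \to TR$ demanded by the corollary is exactly $\psi : \uv_* \ol T \to \widehat T \uv_*$, which is the lifting isomorphism $\uv_* \ol T \cong \widehat T \uv_*$ furnished by hypothesis. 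The corollary's conclusion then reads off as the two assertions to be proved: $R = \uv_*$ lifts to $\widetilde{\uv_*} = \Coalg(\uv_*, \psi)$, and there is an adjunction $\widetilde{\d_*} \dashv \widetilde{\uv_*}$, with $\widetilde{\d_*}$ the image under $\Coalg$ of $\d_*$ equipped with the mate of $\psi^{-1}$ taken along $\d_* \dashv \uv_*$.

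What remains is to check that all of this data genuinely lies in $\ccal$. First, $\ccal$ has the relevant inserters: the explicit inserter description in the remark following Corollary~\ref{cor:lift-adj}, specialised to $\V = \Two$, produces precisely the preorder-enriched coalgebra categories $\Coalg(\widehat T)$ and $\Coalg(\ol T)$ of the diagram. Second, every $\Vcat$-category is a $\Pre$-enriched category via the change of base $\uv : \V \to \Two$ (Example~\ref{ex:Vcat-categories}), so that $\Vcat$ is an object of $\ccal$ and the $\Vcat$-functor $\ol T$ has an underlying locally monotone functor, which is the $1$-cell $T'$ I use; here $\Coalg(\ol T)$ is understood as the $\Pre$-enriched coalgebra category, in accordance with the statement's ``associated $\Pre$-categories of coalgebras''. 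That $\d_*$ and $\uv_*$ are $\Pre$-functors, and that $\d_* \dashv \uv_*$ is a $\Pre$-enriched adjunction, is exactly the content of Remark~\ref{rem:change-of-base}. Finally, $\psi$ is an ordinary natural isomorphism between $\Pre$-functors, and since a $\Pre$-natural transformation amounts to nothing more than an ordinary natural transformation, $\psi$ is automatically a $2$-cell of $\ccal$.

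Having verified every hypothesis, Corollary~\ref{cor:lift-adj} delivers the locally monotone adjunction $\widetilde{\d_*} \dashv \widetilde{\uv_*}$ between $\Coalg(\widehat T)$ and $\Coalg(\ol T)$ commuting with the forgetful functors to $\Pre$ and $\Vcat$, as displayed. The one genuine point of care — and thus the main obstacle — is the bookkeeping of enrichments: one must consistently view $\Vcat$ and $\ol T$ through their $\Pre$-enriched shadows under $\uv$, so that $\ccal = \Pre\mbox{-}\mathsf{CAT}$ really is the ambient $2$-category throughout and the $\Vcat$-enrichment of $\ol T$ enters only to certify that its underlying functor is locally monotone. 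Beyond this identification no further computation is needed, since the doctrinal-adjunction machinery underlying Proposition~\ref{prop:doctrinaladjunction} and Corollary~\ref{cor:lift-adj} supplies everything else.
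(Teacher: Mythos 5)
Your proposal is correct and is exactly the paper's (implicit) argument: the paper derives this proposition by combining Remark~\ref{rem:change-of-base} with Corollary~\ref{cor:lift-adj}, instantiated as you do in the 2-category of $\Pre$-categories with $L=\d_*$, $R=\uv_*$, $T=\widehat T$, $T'=\ol T$ (viewed through change of base along $\uv$) and $\psi$ the lifting isomorphism $\uv_*\ol T \cong \widehat T \uv_*$. Your extra care about the enrichment bookkeeping — that the adjunction is only $\Pre$-enriched, that $\ol T$ enters via its underlying locally monotone functor, and that the coalgebra categories are the $\Pre$-enriched inserters — is precisely the content the paper leaves tacit.
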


\begin{prop}

Assume that $\V$ is a non-trivial integral commutative quantale without zero divisors. 
Let again $\widehat T:\Pre \to \Pre$ be a locally monotone functor, but this time consider $\ol{T}:\Vcat\to \Vcat$ be an extension of $\widehat T$ to $\Vcat$ (meaning that $\ol T$ is a $\Vcat$-functor, such that $\ol{T} \d_* \cong \widehat T \d_*$ holds). 
Then the $\Vcat$-adjunction $\cv_*\dashv \d_*$ lifts to a $\Vcat$-adjunction $\widetilde \cv_*\dashv \widetilde \d_*$ between the associated $\Vcat$-categories of coalgebras.
$$
\xymatrix@R=33pt{
&
\Coalg(\widehat T) 
\ar[d]
\ar@/_8pt/[rr]_{\widetilde \d_*}
\ar@{}[rr]|\bot
\ar@{<-}@/^8pt/[rr]^{\widetilde \cv_*}
&&
\Coalg(\ol{T})
\ar[d]
&
\\
&
\Pre
\ar@/_8pt/[rr]_{\d_*}
\ar@{}[rr]|\bot
\ar@{<-}@/^8pt/[rr]^{\cv_*}
\POS!L(.7),\ar@(ul, dl)_{\widehat T}
&&
\Vcat
\POS!R(.7),\ar@(ur, dr)^{\ol T}
&
}
$$ 
\end{prop}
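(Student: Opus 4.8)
The plan is to obtain this as an instance of Corollary~\ref{cor:lift-adj}, applied in the $2$-category $\ccal = \VCAT$ of (possibly large) $\Vcat$-categories, $\Vcat$-functors and $\Vcat$-natural transformations. First I would record the ambient data. Under the stated hypotheses on $\V$ (non-trivial, integral, without zero divisors), Remark~\ref{rem:change-of-base-cd} guarantees that $\cv_* \dashv \d_*$ is a genuinely $\Vcat$-enriched adjunction; taking $X = \Vcat$ and $X' = \Pre$, this is the adjunction $L \dashv R$ of Corollary~\ref{cor:lift-adj} with $L = \cv_*$ and $R = \d_*$. The endo-$1$-cells are $T = \ol T:\Vcat \to \Vcat$, which is a $\Vcat$-functor by assumption, and $T' = \widehat T:\Pre \to \Pre$, which becomes a $\Vcat$-endofunctor on the object $\Pre$ of $\VCAT$ by transporting the locally monotone $\widehat T$ along the change-of-base $2$-functor induced by $\d:\Two \to \V$.

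The one piece of data still to supply is the iso-$2$-cell $\psi:RT' \to TR$, that is, $\psi:\d_*\widehat T \to \ol T \d_*$. But this is exactly the $\Vcat$-natural isomorphism exhibiting $\ol T$ as an extension of $\widehat T$ in the sense of Definition~\ref{def:ext-lift}\eqref{def:ext}, now with $\d_*$ playing the role of the discrete functor, so $\psi$ is provided verbatim by the extension hypothesis. With all the hypotheses of Corollary~\ref{cor:lift-adj} in place, conclusion~(1) of that corollary lifts $\d_*$ to $\widetilde{\d_*} = \Coalg(\d_*,\psi):\Coalg(\widehat T) \to \Coalg(\ol T)$, and conclusion~(2) produces its left adjoint $\widetilde{\cv_*}$, the image under the $2$-functor $\Coalg$ of the $1$-cell of $\Endo(\ccal)$ determined by $\cv_*$ together with the mate of $\psi^{-1}$. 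Since $\Coalg$ is a $2$-functor on $\VCAT$, the resulting adjunction $\widetilde{\cv_*} \dashv \widetilde{\d_*}$ is again $\Vcat$-enriched, which is precisely the claim.

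The routine verifications I would still carry out are that $\ccal = \VCAT$ possesses the inserters needed for $\Coalg$ to be defined (this is the usual $2$-categorical completeness of the category of large $\Vcat$-categories, the relevant inserters being computed as in the Remark following Proposition~\ref{prop:doctrinaladjunction}, where $\Coalg(T) = \mathsf{Ins}(\Id,T)$), and that $\psi$ is indeed $\Vcat$-natural rather than merely $\Pre$-natural. The main obstacle, and the reason the integrality and no-zero-divisor hypotheses are indispensable here (in contrast to the preceding lifting proposition, which lived in the $\Pre$-enriched world), is making the mixed $\Pre$/$\Vcat$ situation live inside a single $\Vcat$-enriched $2$-category: both the adjunction $\cv_* \dashv \d_*$ and the transported endofunctor $\widehat T$ must be $\Vcat$-enriched, and it is exactly Remark~\ref{rem:change-of-base-cd} that secures the $\Vcat$-enrichment of $\cv_* \dashv \d_*$ under these hypotheses. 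Once this is in place, the lifting is formal.
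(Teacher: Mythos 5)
Your proposal is correct and is precisely the argument the paper intends: the proposition is presented as an immediate consequence of Remark~\ref{rem:change-of-base-cd} (which, under the integrality and no-zero-divisor hypotheses, makes $\cv_*\dashv\d_*$ a $\Vcat$-enriched adjunction) combined with Corollary~\ref{cor:lift-adj} applied in the 2-category of $\Vcat$-categories, $\Vcat$-functors and $\Vcat$-natural transformations, with $\psi:\d_*\widehat{T}\to\ol{T}\d_*$ supplied by the extension isomorphism. Your identification of the roles $L=\cv_*$, $R=\d_*$, $T'=\widehat{T}$ (transported along change of base), $T=\ol{T}$, and your remaining routine checks (inserters in the ambient 2-category, $\Vcat$-naturality of $\psi$) match the paper's setup exactly.
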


\noindent We return now to the discrete functor $D^\V:\Set \to \Vcat$. 
It is easy to see that it decomposes as 
$\xymatrix@C=17pt@1{\Set \ar[r]^-D & \Pre \ar[r]^-{\d_*} & \Vcat}$. 
Additionally, recall the following (see 
also Example~\ref{ex:extensions}\eqref{ex:unicity-of-extens}):
\begin{enumerate}
\item
There are locally monotone functors $D:\Set\to\Pre$, $C:\Pre\to\Set$, where $D$ maps a set to its discrete preorder and $C$ maps a preorder to its set of connected components.

\item
There is a chain $C_o \dashv D_o \dashv V:\Pre \to \Set$
of ordinary adjunctions where $V$ is the underlying-set forgetful functor.

\item
The locally monotone adjunction $C \dashv D$ is $\Vcat$-enriched. 

\end{enumerate}

\medskip\noindent
The next two propositions from \cite{calco2011:balan-kurz} are similar to the two above, but connect $\Set$ with $\Pre$ instead of $\Pre$ with $\Vcat$.

\begin{prop}
Let $T:\Set\to \Set$ and $\widehat T:\Pre \to \Pre$ an extension of $T$ (a locally monotone functor such that $DT \cong \widehat T D$). Then the locally monotone adjunction $C\dashv D$ lifts to a locally monotone adjunction $\widetilde C \dashv \widetilde D$ between the associated categories of coalgebras:
$$
\xymatrix@R=33pt{
&
\Coalg(T) 
\ar[d]
\ar@/_8pt/[rr]_{\widetilde D}
\ar@{}[rr]|\bot
\ar@{<-}@/^8pt/[rr]^{\widetilde C}
&&
\Coalg(\widehat T)
\ar[d]
&
\\
&
\Set
\ar@/_8pt/[rr]_{D}
\ar@{}[rr]|\bot
\ar@{<-}@/^8pt/[rr]^{C}
\POS!L(.7),\ar@(ul, dl)_{T}
&&
\Pre
\POS!R(.7),\ar@(ur, dr)^{\widehat T}
&
}
$$
\end{prop}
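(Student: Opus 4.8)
The plan is to obtain this proposition as a direct instance of Corollary~\ref{cor:lift-adj}, exactly in the manner of the two immediately preceding propositions. The entire argument takes place in the 2-category $\ccal$ of locally ordered categories, locally monotone functors and $\Pre$-enriched natural transformations (that is, the case $\V=\Two$ of the $\Vcat$-enriched setting). First I would record that $\ccal$ has inserters: the explicit inserter construction $\mathsf{Ins}(F,G)$ recalled in the Remark preceding Corollary~\ref{cor:lift-adj} specialises, for $\V=\Two$, to the usual inserter in locally ordered categories, so that the objects $\Coalg(T)$ and $\Coalg(\widehat T)$ are available and are again locally ordered.

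Next I would match the data of the proposition with the hypotheses of Corollary~\ref{cor:lift-adj}. I take the adjunction $L\dashv R$ to be $C\dashv D$, so that $R=D:\Set\to\Pre$ and $L=C:\Pre\to\Set$; this is a locally monotone adjunction by the third item in the list recalled just above the statement. For the endofunctors I set the functor $T':\Set\to\Set$ of the corollary to be the given $T$ (which is automatically locally monotone, since $\Set$ carries the discrete enrichment) and the functor $T:\Pre\to\Pre$ of the corollary to be $\widehat T$ (locally monotone by hypothesis). The iso-2-cell $\psi:RT'\to TR$ demanded by the corollary then reads $\psi:DT\to\widehat T D$, which is precisely the extension isomorphism $DT\cong\widehat T D$ witnessing that $\widehat T$ extends $T$.

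With this dictionary in place, Corollary~\ref{cor:lift-adj} applies verbatim. Part~(1) gives that $D$ lifts to $\widetilde D=\Coalg(D,\psi):\Coalg(T)\to\Coalg(\widehat T)$, and part~(2) produces the left adjoint $\widetilde C$, namely the image under the 2-functor $\Coalg$ of the 1-cell of $\Endo(\ccal)$ built from $C$ together with the mate of $\psi^{-1}$ under $C\dashv D$. Since both $\widetilde C$ and $\widetilde D$ are $\Pre$-enriched, being images under the 2-functor $\Coalg$ of 1-cells of $\ccal$, the resulting adjunction $\widetilde C\dashv\widetilde D$ is again locally monotone, and it sits over $C\dashv D$ through the forgetful functors, exactly as displayed in the statement. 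That $\Coalg$ preserves this adjunction is already established, as it is a right 2-adjoint.

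The only genuinely substantive point, and hence the step I would be most careful about, is the bookkeeping of orientations: one must check that the arrow furnished by the extension condition points as $DT\to\widehat T D$ (rather than the reverse), so that it occupies the slot $\psi:RT'\to TR$ of the corollary, and that the mate of $\psi^{-1}$ is formed with respect to $C\dashv D$ in the correct variance. Everything else---the existence of inserters, the local monotonicity of $T$ on the discretely ordered $\Set$, and the preservation of adjunctions by $\Coalg$---is either immediate or already in hand, so no further computation is required.
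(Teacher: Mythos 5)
Your proposal is correct and is exactly the argument the paper intends: like the two preceding propositions, this one is an instance of Corollary~\ref{cor:lift-adj}, applied in the 2-category of $\Pre$-enriched categories with $L\dashv R := C\dashv D$, the corollary's $T'$ and $T$ taken to be $T$ and $\widehat{T}$ respectively, and $\psi:DT\to\widehat{T}D$ the extension isomorphism, with doctrinal adjunction and 2-functoriality of $\Coalg$ supplying the lifted left adjoint $\widetilde{C}$ and the local monotonicity. Your identification of the orientation of $\psi$ and of which functor lifts to the right adjoint matches the paper's setup precisely, so nothing is missing.
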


\noindent Consequently, $\widetilde D$ will preserve limits, in particular, the final coalgebra (if it exists).

\begin{prop}
Let $T:\Set\to \Set$ and $\widehat T:\Pre \to \Pre$ a lifting of $T$ (an ordinary functor such that $TV \cong V\widehat T$).
Then the ordinary adjunction $D_o\dashv V$ lifts to an ordinary adjunction $\widetilde D_o \dashv \widetilde V$ between the associated categories of coalgebras.
$$
\xymatrix@R=33pt{
\Coalg(T) 
\ar[d]
\ar@/^8pt/[rr]^{\widetilde D_o}
\ar@{}[rr]|\bot
\ar@{<-}@/_8pt/[rr]_{\widetilde V}
&&
\Coalg(\widehat T)
\ar[d]
\\
\Set
\ar@/^8pt/[rr]^{D_o}
\ar@{}[rr]|\bot
\ar@{<-}@/_8pt/[rr]_{V}
\POS!L(.7),\ar@(ul, dl)_{T}
&&
\Pre
\POS!R(.7),\ar@(ur, dr)^{\widehat T}
\\
}
$$ 
\end{prop}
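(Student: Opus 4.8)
The plan is to derive this as a direct instance of Corollary~\ref{cor:lift-adj}, applied in the 2-category $\ccal=\mathsf{CAT}$ of (ordinary) categories, functors and natural transformations. First I would recall that $\mathsf{CAT}$ has inserters: for $F,G:\mathcal{A}\to\mathcal{B}$ the inserter $\mathsf{Ins}(F,G)$ is the category whose objects are pairs $(a,\phi)$ with $a$ an object of $\mathcal{A}$ and $\phi:Fa\to Ga$ an arrow of $\mathcal{B}$, and whose projection $J$ to $\mathcal{A}$ is the evident forgetful functor. Consequently $\Coalg(X,T)=\mathsf{Ins}(\Id_X,T)$ is precisely the usual category of $T$-coalgebras, with $J$ the forgetful functor to the base.

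Next I would match the data of Corollary~\ref{cor:lift-adj} to the present situation. I take the adjunction $L\dashv R$ there to be $D_o\dashv V:\Pre\to\Set$, so that $R=V$ (with $X'=\Pre$, $X=\Set$) and $L=D_o$; and I take the two endo-1-cells to be $T:\Set\to\Set$ and $T'=\widehat T:\Pre\to\Pre$. The corollary requires an invertible 2-cell $\psi:RT'\to TR$, that is $\psi:V\widehat T\to TV$. The lifting hypothesis provides an isomorphism $TV\cong V\widehat T$; I take $\psi$ to be its inverse, which is again invertible, so the hypotheses of Corollary~\ref{cor:lift-adj} are met.

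Corollary~\ref{cor:lift-adj}(1) then produces the lifted right adjoint $\widetilde V=\Coalg(V,\psi):\Coalg(\widehat T)\to\Coalg(T)$, which sends a $\widehat T$-coalgebra $d:P\to\widehat T P$ to the $T$-coalgebra $\psi_P\circ Vd:VP\to TVP$; and part~(2) produces the desired adjunction $\widetilde D_o\dashv\widetilde V$, where $\widetilde D_o=\Coalg(D_o,\phi)$ is assembled from $D_o$ together with the mate $\phi:D_o T\to\widehat T D_o$ of $\psi^{-1}$ under $D_o\dashv V$; explicitly $\widetilde D_o$ sends a $T$-coalgebra $c:X\to TX$ to $\phi_X\circ D_o c:D_oX\to\widehat T D_oX$. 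Finally, commutativity of the two triangles relating these lifts to the base adjunction $D_o\dashv V$ is automatic: the forgetful functors $J$ are the 1-cell components of the counit of the 2-adjunction $\mathsf{Incl}\dashv\Coalg$, and 2-naturality of that counit gives $V\circ J=J\circ\widetilde V$ and $D_o\circ J=J\circ\widetilde D_o$.

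The main point to watch, rather than a genuine obstacle, is the bookkeeping of variance: the corollary is phrased with $\psi:RT'\to TR$ while the hypothesis delivers $TV\cong V\widehat T$ in the opposite direction, so one must pass to the inverse before applying it. Everything else---that $\mathsf{CAT}$ has the required inserters, that $\Coalg$ acts as a right 2-adjoint and hence preserves adjunctions of 1-cells, and that the squares with the forgetful functors commute---is formal and carries no computational content, the entire substance having been isolated once and for all in Corollary~\ref{cor:lift-adj}.
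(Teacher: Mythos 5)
Your proposal is correct and follows essentially the same route as the paper: the paper states this proposition (citing its CALCO 2011 predecessor) as an instance of the lifting machinery of Section~\ref{sec:2-cat}, i.e.\ Corollary~\ref{cor:lift-adj} applied in the ordinary 2-category of categories, which is forced here because $V$ is not locally monotone and so the adjunction $D_o\dashv V$ exists only at the ordinary level. Your identification of the data ($L=D_o$, $R=V$, $\psi$ the inverse of the given iso $TV\cong V\widehat T$, $\widetilde D_o$ built from the mate of $\psi^{-1}$) and the observation that compatibility with the forgetful functors is automatic from 2-naturality of the counit of $\mathsf{Incl}\dashv\Coalg$ are exactly the intended argument.
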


\noindent
Consequently, $\widetilde V$ will preserve ordinary limits; in particular, the underlying set of a final $\widehat T$-coalgebra will be a final $T$-coalgebra.


\medskip\noindent 
Let now $T:\Set\to \Set$ and denote by $T_\Two$ is $\Two\mbox{-}\mathsf{cat}$-ification, that is, its $\Pre$-ification~\cite{lmcs:bkv}. We plan to see how $T_\Two$ and $T_\V$, the $\Vcat$-ification of $T$, are related. We start by the following observation:


\begin{prop}\label{prop:preordvcatdense}
The embedding $\d_\ast:\Pre\to\Vcat$ is dense.
\end{prop}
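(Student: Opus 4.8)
The plan is to exhibit $\d_\ast$ as a pointwise left Kan extension of the already-dense functor $D^\V:\Set\to\Vcat$ along a fully faithful functor, and then to conclude density by the same mechanism used in the proof of Proposition~\ref{prop:D-dense}, namely~\cite[Proposition~5.10]{kelly:book}. The starting point is the factorisation $D^\V=\d_\ast\circ D$ recorded just above the statement, where $D:\Set\to\Pre$ is the discrete functor, regarded as a $\Vcat$-functor via the change-of-base enrichment of $\Pre$. Two facts about $D:\Set\to\Pre$ are needed: it is fully faithful (between discrete preorders every map is monotone and the pointwise order is again discrete, so $\Set(X,Y)\cong\Pre(DX,DY)$ as $\V$-categories), and it is dense — this being the case $\V=\Two$, witnessed by the coinserter presentation of Example~\ref{exle:preord-coins-discrete} and established in~\cite{lmcs:bkv}.

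First I would prove the identification $\d_\ast\cong\Lan{D}{(D^\V)}$. By density of $D:\Set\to\Pre$, every preorder $X$ is the weighted colimit $\Colim{\Pre(D{-},X)}{D}$ of discrete preorders, presented concretely by the coinserter~\eqref{eq:poset_coins}. Since the change-of-base adjunction $\d_\ast\dashv\uv_\ast$ is $\Pre$-enriched (Remark~\ref{rem:change-of-base}), its left adjoint $\d_\ast$ preserves all $\Pre$-weighted colimits, in particular these coinserters. Transporting the weight and the diagram along $\d$ then yields
\[
\d_\ast X\;\cong\;\d_\ast\bigl(\Colim{\Pre(D{-},X)}{D}\bigr)\;\cong\;\Colim{\Pre(D{-},X)}{(\d_\ast D)}\;=\;\Colim{\Pre(D{-},X)}{D^\V}\;\cong\;(\Lan{D}{(D^\V)})X,
\]
naturally in $X$, which is the desired identification and simultaneously shows the Kan extension exists and is pointwise.

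With this in hand the conclusion is immediate: $D:\Set\to\Pre$ is fully faithful and $D^\V$ is dense by Corollary~\ref{cor:dense}, so by~\cite[Proposition~5.10]{kelly:book} the left Kan extension $\Lan{D}{(D^\V)}\cong\d_\ast$ is dense. (Alternatively one may invoke the transitivity of density along a composite whose first factor is dense and fully faithful.)

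The main obstacle is the identification $\d_\ast\cong\Lan{D}{(D^\V)}$, and within it the coherence between the two enrichments. The colimit preservation by $\d_\ast$ is $\Pre$-enriched, whereas the Kan extension and the target notion of density are $\Vcat$-enriched; what must be checked is that $\d_\ast$ sends the $\Pre$-coinserter computing $X$ to the $\Vcat$-weighted colimit computing $(\Lan{D}{(D^\V)})X$, i.e.\ that the change of base $\d\colon\Two\to\V$ carries the density presentation of $D:\Set\to\Pre$ to a $\V$-weighted colimit presentation of $\d_\ast D=D^\V$. This uses that $\d$, being sup-preserving (a left adjoint), is colimit-preserving, so that the standard change-of-base formula $\d_\ast(W*F)\cong(\d_\ast W)*(\d_\ast F)$ applies to the weight $W=\Pre(D{-},X)$; once this coherence is verified the remaining steps are routine.
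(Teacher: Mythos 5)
Your proof is correct in its essentials, but it follows a genuinely different route from the paper's, which is a two-line appeal to \cite[Theorem~5.13]{kelly:book}: since the composite $D^\V=\d_\ast D$ is $\Vcat$-dense (Theorem~\ref{thm:dense-pres}) and the \emph{outer} factor $\d_\ast$ is fully faithful, that theorem yields at once that both $D$ and $\d_\ast$ are dense \emph{and} that $\d_\ast\cong\Lan{D}{(D^\V)}$ --- the identification you work hard to establish is a by-product there, not an input. You instead prove that identification directly ($\Pre$-density of $D:\Set\to\Pre$ from \cite{lmcs:bkv}, $\Pre$-cocontinuity of the left adjoint $\d_\ast$, change of base along $\d$) and then conclude with \cite[Proposition~5.10]{kelly:book}, using fully-faithfulness of the \emph{inner} factor $D:\Set\to\Pre$. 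The trade-off is real: the paper's use of fully-faithfulness of $\d_\ast$ silently requires $\V$ to be integral, since exactly as in Proposition~\ref{prop:D-ff} one has $\Vcat(\mathbb 0,\d_\ast Q)\cong\mathbb 1_\top$ while the $\Vcat$-hom of $\Pre$ gives $\d_\ast\Pre(\emptyset,Q)\cong\mathbb 1$; your inner factor $D:\Set\to\Pre$ is fully faithful for \emph{every} commutative quantale (the hom-preorders of $\Pre$ are transported by $\d_\ast$, so the empty-domain hom has self-distance $e$, not $\top$), so your route would in principle prove the proposition without integrality. What your route costs is the coherence step you flag, and that step deserves more care than ``$\d$ preserves sups'': a $\Pre$-weighted colimit computed in the $\Pre$-category underlying $\Vcat$ is not automatically a $\Vcat$-weighted colimit --- only the converse direction is formal, obtained by applying $\uv_\ast$ along the monoidal adjunction $\d_\ast\dashv\uv_\ast$ of Remark~\ref{rem:change-of-base}. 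You can close it either by citing Verity's change-of-base theorem \cite[Theorem~1.7.1]{verity}, which is precisely what the paper does for the analogous step in Remark~\ref{rem:two-step-vcatification}, or by a uniqueness argument: the $\Vcat$-colimit of $D^\V$ weighted by $\d_\ast\Pre(D{-},X)$ exists because $\Vcat$ is cocomplete, it is then also a $\Pre$-colimit for the weight $\Pre(D{-},X)$ by the formal direction, and since your $\d_\ast X$ is a colimit for the same $\Pre$-weight, the two coincide in $\Vcat$. With either patch your argument is complete, and pleasantly parallel to the paper's proof of Proposition~\ref{prop:D-dense}.
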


\begin{proof}
We have shown in Theorem~\ref{thm:dense-pres} that 
$D^\V = \d_*  D$ is $\Vcat$-dense. Using that
$\d_\ast$ is fully faithful, it follows from~\cite[Theorem~5.13]{kelly:book} that both $D$ and $\d_*$ are $\Vcat$-dense and that $\d_*=\mathsf{Lan}_D (D^\V)$ holds.
\end{proof}

\begin{rem}\label{rem:two-step-vcatification}
Let $T:\Set\to \Set$ be a set-functor and $T_\Two$ its $\Pre$-ification as above. Then the $\Vcat$-ification $T_\V$ of $T$ can be computed in two stages, as follows: 
\begin{align*}
T_\V \ 
& 
= 
&& 
\mathsf{Lan}_{D^\V} (D^\V T) 
\\
&
= 
&& 
\mathsf{Lan}_{(\d_* D)} (\d_* D T) = \mathsf{Lan}_{\d_*} (\mathsf{Lan}_D (\d_* DT)) 
& 
\mbox{\quad (by \cite[Theorem~4.47]{kelly:book},}
\\
&
&&
&
\mbox{because $\d_\ast$ is fully faithful)} 
\\
&
\cong 
&& 
\mathsf{Lan}_{\d_*} (\mathsf{Lan}_D (\d_* T_\Two D)) 
&
\mbox{ (because $DT \cong T_\Two D$)}
\\
& 
\cong 
&& 
\mathsf{Lan}_{\d_*} (\d_* T_\Two) 
\end{align*}
where the last isomorphism holds because $\mathsf{Lan}_D (\d_* T_\Two D) \cong \d_* T_\Two$. To verify the latter isomorphism, notice first that the $\Pre$-enriched left Kan extension $\mathsf{Lan}_D (\d_* T_\Two D)$ is also the $\Vcat$-enriched left Kan extension of $\d_* T_\Two D$ along $D$, by a change-of-base argument as in~\cite[Theorem~1.7.1]{verity}. Next, apply~\cite[Theorem~5.29]{kelly:book} to the composite $\Pre$-functor $\d_* T_\Two$, using the density presentation of $D$ exhibited in~\cite{lmcs:bkv} and the fact that $\d_*$ is $\Pre$-left adjoint.   
\end{rem}

\noindent The remark above says that 
the $\Vcat$-ification of an endofunctor $T$ of $\Set$ can be obtained by taking first the $\Pre$-ification $T_\Two:\Pre \to \Pre$ and then the left Kan extension of 
$\d_\ast T_\Two$
along $\d_*$,  
as in 
\begin{equation*}
\xymatrix{
\Vcat \ar[r]^{T_\V} & \Vcat \\
\Pre \ar[r]^{T_\Two}\ar[u]^{\d_\ast} & \Pre\ar[u]_{\d_\ast} \\
\Set \ar[r]^{T}\ar[u]^{D} & \Set\ar[u]_{D} 
}
\end{equation*}

\medskip\noindent Putting things together we now obtain

\begin{thm}\label{thm:beh-vcatification}
Let $\V$ be a non-trivial integral commutative quantale without zero divisors, and $T:\Set \to \Set$ an arbitrary endofunctor, with $\Vcat$-ification $T_\V:\Vcat \to \Vcat$. 
Then the $\Vcat$-adjunctions $C\dashv D:\Set \to \Pre$, $\cv_* \dashv \d_*:\Pre \to \Vcat$ lift to $\Vcat$-adjunctions between the associated $\Vcat$-categories of coalgebras:
$$
\xymatrix@R=33pt{
&
\Coalg(T) 
\ar[d]
\ar@/_8pt/[rr]_{\widetilde D}
\ar@{}[rr]|\bot
\ar@{<-}@/^8pt/[rr]^{\widetilde C}
&&
\Coalg(T_\Two)
\ar[d]
\ar@/_8pt/[rr]_{\widetilde \d_*}
\ar@{}[rr]|\bot
\ar@{<-}@/^8pt/[rr]^{\widetilde \cv_*}
&&
\Coalg(T_\V)\ar[d]
&
\\
&
\Set
\ar@/_8pt/[rr]_{D}
\ar@{}[rr]|\bot
\ar@{<-}@/^8pt/[rr]^{C}
\POS!L(.7),\ar@(ul, dl)_{T}
&&
\Pre
\ar@/_8pt/[rr]_{\d_*}
\ar@{}[rr]|\bot
\ar@{<-}@/^8pt/[rr]^{\cv_*}
&&
\Vcat
\POS!R(.7),\ar@(ur, dr)^{T_\V}
&
}
$$
\end{thm}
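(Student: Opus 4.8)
The plan is to obtain the diagram in the statement as the horizontal composite of two lifted adjunctions, each produced by a single application of Corollary~\ref{cor:lift-adj}. We work throughout in the $2$-category $\ccal=\VCAT$ of $\Vcat$-categories, $\Vcat$-functors and $\Vcat$-natural transformations. This $2$-category has inserters: the inserter $\mathsf{Ins}(F,G)$ of a parallel pair of $\Vcat$-functors exists because $\Vcat$ is complete, as spelled out in the remark computing $\mathsf{Ins}(F,G)$. Hence the inclusion $\mathsf{Incl}:\VCAT\to\Endo(\VCAT)$ has a right $2$-adjoint $\Coalg$, sending each endo-$1$-cell to its $\Vcat$-category of coalgebras, and the two adjunctions of the statement can be fed through it.

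For the left-hand square I would apply Corollary~\ref{cor:lift-adj} to the $\Vcat$-enriched adjunction $C\dashv D:\Set\to\Pre$ (which is $\Vcat$-enriched by item~(3) of the list preceding the theorem), taking $T':\Set\to\Set$ to be $T$ and $T:\Pre\to\Pre$ to be its $\Pre$-ification $T_\Two$. The iso-$2$-cell required by the corollary, $\psi:DT\to T_\Two D$, is exactly the isomorphism exhibiting $T_\Two$ as the extension of $T$ in the sense of Definition~\ref{def:ext-lift}; it is invertible since $\V$ is integral, so that $D:\Set\to\Pre$ is fully faithful. The corollary then yields the lift $\widetilde D=\Coalg(D,\psi):\Coalg(T)\to\Coalg(T_\Two)$ together with a $\Vcat$-adjunction $\widetilde C\dashv\widetilde D$, each compatible with the forgetful projections down to $\Set$ and $\Pre$ because $\Coalg$ is a $2$-functor that sends the $1$-cell $(D,\psi)$ of $\Endo(\VCAT)$ over $D$.

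For the right-hand square I would apply Corollary~\ref{cor:lift-adj} to $\cv_*\dashv\d_*:\Pre\to\Vcat$, which is $\Vcat$-enriched precisely under the standing hypotheses that $\V$ be non-trivial, integral and \emph{without zero divisors}, so that $\cv$ is a genuine morphism of quantales (Remark~\ref{rem:change-of-base-cd}). Here $T':\Pre\to\Pre$ is $T_\Two$ and $T:\Vcat\to\Vcat$ is $T_\V$, and the required iso-$2$-cell $\psi:\d_*T_\Two\to T_\V\d_*$ is the unit of the left Kan extension $T_\V\cong\Lan{\d_*}{(\d_*T_\Two)}$ established in Remark~\ref{rem:two-step-vcatification}. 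Because $\d_*$ is fully faithful, this unit is an isomorphism by Remark~\ref{rem:lan}, so the hypotheses of the corollary are met. We thus obtain $\widetilde\d_*:\Coalg(T_\Two)\to\Coalg(T_\V)$ with $\widetilde\cv_*\dashv\widetilde\d_*$. Composing this horizontally with the first adjunction yields the full diagram.

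The routine part is diagram-chasing bookkeeping; the main obstacle is the enrichment. One must verify that the two adjunctions are genuinely adjunctions in $\VCAT$ and not merely in the $2$-category of $\Pre$-categories, and that the two mediating $2$-cells $\psi$ are $\Vcat$-natural isomorphisms in the directions dictated by $\psi:RT'\to TR$. The no-zero-divisors assumption is essential for the first point, since without it $\cv_*$ need not be a $\Vcat$-left adjoint; integrality enters twice for the second point, once to make $D$ fully faithful (so that $DT\cong T_\Two D$) and once, through the fully-faithfulness of $\d_*$ and Remark~\ref{rem:lan}, to invert the Kan-extension unit $\d_*T_\Two\to T_\V\d_*$.
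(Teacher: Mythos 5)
Your proposal is correct and follows essentially the same route as the paper: the theorem is obtained by ``putting together'' the two preceding propositions, which are themselves instances of Corollary~\ref{cor:lift-adj} applied in the $2$-category of $\Vcat$-categories to $C\dashv D$ (with the iso $DT\cong T_\Two D$) and to $\cv_*\dashv\d_*$ (with the iso $\d_*T_\Two\cong T_\V\d_*$ coming from Remark~\ref{rem:two-step-vcatification} and the fully-faithfulness of $\d_*$), exactly as you describe. The only quibble is a misattribution: the invertibility of $DT\to T_\Two D$ rests on the fully-faithfulness of $D:\Set\to\Pre$, i.e.\ on the integrality of $\Two$ (which is automatic), not on the integrality of $\V$; the latter enters only through $\d_*$ and Remark~\ref{rem:change-of-base-cd}.
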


\medskip\noindent
Since the $\Vcat$-ification $T_\V$ of an endofunctor $T$ on $\Set$ is supposed to be ``$T$ in the world of $\V$-categories'', the theorem above confirms the expectation that final $T_\V$-coalgebras have a discrete metric. In fact, we can say that the final $T$-coalgebra is the final $T_\V$-coalgebra, if we consider $\Coalg(T)$ as a full (enriched-reflective) subcategory of $\Coalg(T_\V)$.


\medskip\noindent
The next theorem deals with a more general situation where the final metric-coalgebra is the final set-coalgebra with an additional metric. This includes in particular the case where $\ol T$ is $H^\sharp$ for some $H:\Set\to\Vcat$ with $V^\V H_o=T_o$.

\begin{thm}\label{thm:beh-discrete-arities}
Let $\V$ be a commutative quantale, $T:\Set \to \Set$ be an arbitrary endofunctor, $\widehat T:\Pre \to \Pre$ a lifting of $T$ to $\Pre$, and $\ol T:\Vcat \to \Vcat$ be a lifting of $\widehat T$ to $\Vcat$. 
Then the ordinary adjunction $D_o\dashv V: \Pre\to\Set$ and the $\Pre$-adjunction $\d_* \dashv \uv_*:\Vcat\to\Pre$ lift to adjunctions between the associated $\Vcat$-categories of coalgebras
$$
\xymatrix@R=33pt{
&
\Coalg(T) 
\ar[d]
\ar@{<-}@/_8pt/[rr]_{\widetilde V}
\ar@{}[rr]|\bot
\ar@/^8pt/[rr]^{\widetilde D_o}
&&
\Coalg(\widehat T)
\ar[d]
\ar@{<-}@/_8pt/[rr]_{\widetilde \uv_*}
\ar@{}[rr]|\bot
\ar@/^8pt/[rr]^{\widetilde \d_*}
&&
\Coalg(\overline T)\ar[d]
&
\\
&
\Set
\ar@{<-}@/_8pt/[rr]_{V}
\ar@{}[rr]|\bot
\ar@/^8pt/[rr]^{D_o}
\POS!L(.7),\ar@(ul, dl)_{T}
&&
\Pre
\ar@{<-}@/_8pt/[rr]_{\uv_*}
\ar@{}[rr]|\bot
\ar@/^8pt/[rr]^{\d_*}
&&
\Vcat
\POS!R(.7),\ar@(ur, dr)^{\ol T}
&
}
$$
\end{thm}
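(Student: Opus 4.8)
The plan is to read off the entire diagram from two applications of Corollary~\ref{cor:lift-adj}, one per square, feeding in the two lifting isomorphisms as the invertible $2$-cells the corollary requires. The only genuine complication is that the two base adjunctions sit at different levels of enrichment, so the corollary must be invoked in two different ambient $2$-categories $\ccal$; the $\Vcat$-enrichment of the coalgebra objects will then be addressed separately.

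For the left-hand square I would take $\ccal=\mathsf{CAT}$. With $R=V:\Pre_o\to\Set_o$ and $L=D_o$, the endo-$1$-cells are $T$ on $\Set$ and $\widehat T$ on $\Pre$. The hypothesis that $\widehat T$ is a lifting of $T$ provides an isomorphism $TV\cong V\widehat T$; its inverse is exactly the iso-$2$-cell $\psi:V\widehat T\to TV$ of Corollary~\ref{cor:lift-adj}. The corollary then lifts $V$ to $\widetilde V=\Coalg(V,\psi):\Coalg(\widehat T)\to\Coalg(T)$ and yields the ordinary adjunction $\widetilde{D_o}\dashv\widetilde V$, with $\widetilde{D_o}$ the image of $D_o$ paired with the mate of $\psi^{-1}$.

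For the right-hand square I would instead take $\ccal=\Pre\mbox{-}\mathsf{CAT}$, the $2$-category of locally ordered categories, $\Pre$-functors and $\Pre$-natural transformations; by Remark~\ref{rem:change-of-base} the adjunction $\d_*\dashv\uv_*$ is $\Pre$-enriched and both legs are $\Pre$-functors. Setting $R=\uv_*$, $L=\d_*$, $T=\widehat T$ and $T'=\ol T$, the hypothesis that $\ol T$ is a lifting of $\widehat T$ gives the isomorphism $\uv_*\ol T\cong\widehat T\uv_*$, hence the required $\psi:\uv_*\ol T\to\widehat T\uv_*$. Corollary~\ref{cor:lift-adj} now lifts $\uv_*$ to $\widetilde{\uv_*}:\Coalg(\ol T)\to\Coalg(\widehat T)$ and produces the $\Pre$-adjunction $\widetilde{\d_*}\dashv\widetilde{\uv_*}$. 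Pasting the two squares along the common vertex $\Coalg(\widehat T)$ assembles the diagram in the statement.

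Two points need checking, and this is where I expect the real work to lie. First, to set up the right-hand square in $\Pre\mbox{-}\mathsf{CAT}$ one must know that $\widehat T$ is genuinely a $\Pre$-functor, although it is hypothesised only as an ordinary lifting. I would derive this from the remaining data: given $f\le g$ in a hom of $\Pre$, lift along the $2$-functor $\d_*$ to $\d_* f\le\d_* g$, apply the $\Vcat$-functor $\ol T$ (which preserves this $2$-cell, since its action on homs is a $\V$-functor and hence monotone on underlying preorders), apply the $2$-functor $\uv_*$, and transport back along $\psi$ using $\uv_*\d_*=\Id_\Pre$; as the components of $\psi$ are order-isomorphisms, conjugation by them preserves the comparison, giving $\widehat Tf\le\widehat Tg$. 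The standard fact that a $\Pre$-natural transformation is nothing more than an ordinary natural transformation then makes $\psi$ automatically $\Pre$-natural. Second, and more delicately, the three coalgebra objects in the statement are $\Vcat$-categories, formed as inserters in $\VCAT$ as in the discussion around Proposition~\ref{prop:vcatcocomplete}, whereas the adjunctions above are produced in the coarser $\mathsf{CAT}$ and $\Pre\mbox{-}\mathsf{CAT}$. One must verify that the inserter computing each $\Coalg$ is stable under change of enrichment base, so that it carries the same objects (coalgebras) and morphisms (coalgebra homomorphisms) at all three levels, with hom-$\V$-categories inherited pointwise from the ambient base. Only then do the ordinary and the $\Pre$-enriched adjunctions genuinely relate the fixed underlying $\Vcat$-categories of coalgebras, exactly as claimed.
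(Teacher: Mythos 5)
Your proposal is correct and takes essentially the same route as the paper: the theorem there is obtained precisely by pasting the two propositions that precede it (the ordinary lifting of $D_o\dashv V$ along $TV\cong V\widehat T$ and the $\Pre$-enriched lifting of $\d_*\dashv\uv_*$ along $\uv_*\ol{T}\cong\widehat T\uv_*$), both being instances of Corollary~\ref{cor:lift-adj}, i.e.\ of doctrinal adjunction plus the 2-functoriality of $\Coalg$. The two supplementary checks you flag --- deriving local monotonicity of $\widehat T$ from the $\Vcat$-functoriality of $\ol{T}$ via $\uv_*\d_*=\Id$, and the agreement of the coalgebra inserters across the different enrichment bases --- are points the paper leaves implicit, and your arguments for them are sound.
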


\noindent
It follows that $\overline T$ and $T$ induce the same notion of behavioural equivalence (bisimilarity). Nevertheless, the final $\overline T$-coalgebra can provide additional information about order and metric of non-bisimilar elements.

\begin{exa}\label{ex:beh-metric-stream}
Recall from Example~\ref{ex:functors-with-V-metric}\eqref{ex:machine} the functor $TX=X^A \times B$ and its lifting $H^\sharp \X = \X^A \otimes \B$. 
Assume that the quantale is integral. 
Then the final $H^\sharp$-coalgebra is the power $\V$-category $\B^{A^*}$ having as objects all functions mapping each finite sequence of inputs to the last observable output in $\B$, with $\V$-distances
\[
\B^{A^*}(h,k) = \bigwedge_{\mathsf l\in A^*} \B(h(\mathsf l),k(\mathsf l))
\] 
for each pair of behaviour functions $h,k:A^* \to B$. 

\medskip\noindent In particular, for the lifting $H^\sharp$ of the stream functor from Example~\ref{ex:functors-with-V-metric}\eqref{ex:stream} we obtain the final $H^\sharp$-coalgebra as the $\V$-category $\B^{\mathbb N} $ of streams over $B$. 
The lifting $H^\sharp$ of the deterministic automata functor from Example~\ref{ex:functors-with-V-metric}\eqref{ex:det-autom} given by $H^\sharp \X = \X^A \otimes \Two_{r,s}$ has now as final coalgebra the ``generalised metric space'' $\Two_{r,s}^{A^*}$ of languages over the alphabet $A$.
\end{exa}



\section{Conclusion}

\medskip\noindent 
This paper is part of a larger endeavour extending set-based coalgebra to $\Vcat$-based coalgebra, see for example \cite{VelebilK11,KurzV13,bkpv:rel-lift,lmcs:bkv,KurzV17,BabusK16,dahlqvist-kurz:calco17}. 
Here, we showed that every functor $H:\Set\to\Vcat$ has a left-Kan extension $H^\sharp:\Vcat \to \Vcat$, and that the final $H^\sharp$-coalgebra is the corresponding final coalgebra over $\Set$ equipped with a $\V$-metric. 

\medskip\noindent 
There are several directions in which to expand our results. 
For example, it would be interesting to move from commutative to general quantales and to quantaloids~\cite{rosenthal, stubbe:quant}. 
There is also the question whether one can extend not only functors but also monads in a uniform way, which could be related to (metric) trace (bi)simulation as in~\cite{bbkk:calco2015-ext}.

\medskip\noindent 
Coalgebraically, it would be interesting to further develop the topic, barely touched upon in the last section, of behavioural pseudo-metrics~\cite{approx-bisim-prob,dgjp:metric-markov-proc,rutten:cmcs98,worrell:cmcs00}, while on the logical side, we aim to combine this paper with \cite{KurzV17,BabusK16} in the pursuit of an equational approach to quantitative reasoning about coalgebras/transition systems, an objective related to recent work on quantitative algebraic reasoning~\cite{mpp:quant-alg-reason}. 


\section*{Acknowledgements} 

\medskip\noindent We thank the anonymous referees of CALCO and LMCS for their valuable comments and patience that allowed us to improve the presentation of our results.




\end{document}